\newcolumntype{L}{>{$}l<{$}} 
\theoremstyle{definition}
\newtheorem{theorem}{Theorem}[section]
\newtheorem{lemma}[theorem]{Lemma}
\newtheorem{definitionlemma}[theorem]{Definition-Lemma}
\newtheorem{proposition}[theorem]{Proposition}
\theoremstyle{definition}
\newtheorem{example}[theorem]{Example}
\newtheorem{definition}[theorem]{Definition}
\newtheorem{corollary}[theorem]{Corollary}
\newtheorem{remark}[theorem]{Remark}
\numberwithin{equation}{section}
\newenvironment{sketchproof}{\paragraph{\it Sketch Proof.}}{\hfill$\square$}
\newcommand{\C}{\mathbb{C}}
\newcommand{\N}{\mathbb{N}}
\renewcommand{\P}{\mathbb{P}}
\newcommand{\Q}{\mathbb{Q}}
\newcommand{\R}{\mathbb{R}}
\newcommand{\Z}{\mathbb{Z}}
\def\d{\mbox{d}}
\def\vol{\mbox{vol}}
\DeclareMathOperator{\GL}{GL}
\newcommand{\Grass}{\operatorname{Grass}}
\newcommand{\Hom}{\operatorname{Hom}}
\newcommand{\Ker}{\operatorname{Ker}}
\newcommand{\rank}{\operatorname{rk}}
\newcommand{\sub}{\operatorname{sub}}
\newcommand{\cC}{\mathcal{C}}
\newcommand{\cE}{\mathcal{E}}
\newcommand{\cH}{\mathcal{H}}
\newcommand{\cO}{\mathcal{O}}
\newcommand{\cQ}{\mathcal{Q}}
\newcommand{\cR}{\mathcal{R}}
\newcommand{\cS}{\mathcal{S}}
\title[{H}odge-{R}iemann Relations in the Linear and K\"ahler Cases]{{H}odge-{R}iemann  Relations for {S}chur Classes\\ in the Linear and K\"ahler Cases}
\author{Julius Ross and Matei Toma}
\address{Department of Mathematics, Statistics, and Computer Science, University of Illinois at Chicago, 322 Science and Engineering Offices (M/C 249), 851 S. Morgan Street, Chicago, IL 60607
 }
\email{juliusro@uic.edu}
\address{Universit\'e de Lorraine, CNRS, IECL, F-54000 Nancy, France
 }
\email{matei.toma@univ-lorraine.fr}
\keywords{ 14C17, 14J60, 32J27, 52A40}
\date{\today}
\begin{document}
\begin{abstract}We prove a version of the Hodge-Riemann bilinear relations for Schur polynomials of K\"ahler forms and for Schur polynomials of positive forms on a complex vector space.
\end{abstract}

\maketitle

\section{Introduction}

Let $(X,\omega)$ be a compact K\"ahler manifold of dimension $d\ge2$ and let $H^{1,1}_\R(X)$ be its real Dolbeault cohomology group in bi-degree $(1,1)$. The classical Hodge-Riemann bilinear relations imply that the quadratic form 
$$Q_{\{\omega\}^{d-2}}:H^{1,1}_\R(X)\to\R, \ \{\alpha\}\mapsto\int_X\alpha^2\wedge\omega^{d-2}$$
is non-degenerate of signature $(1,h^{1,1}-1)$.  We will  summarize this by saying that the class $\{\omega\}^{d-2}$ has the {\em Hodge-Riemann property}. 

When $X$ is a complex torus we may take parallel representatives in the classes $\{\omega\}$ and $\{\alpha\}$ and the above statement reduces to its ``linear version'', namely that if $E$ is a $d$-dimensional complex vector space and $\omega$ is a strictly positive $(1,1)$-form on $E$ then the quadratic form 
$$Q_{\omega^{d-2}}:\bigwedge^{1,1}_\R E^*\to\R, \ \alpha\mapsto\frac{\alpha^{2}\wedge\omega^{d-2}}{\vol}$$ has signature $(1,h^{1,1}-1)$, which is a purely linear algebraic statement (see Section \ref{sec:linearalgebranotation} for the notation).  This has an easy direct proof since we may diagonalize  $\omega$ with respect to some appropriate basis of $E$ and immediately obtain the representative matrix of $Q_{\omega^{d-2}}$ in terms of such a basis.

In relation with  classical questions arising  in convex geometry,
Alexandrov  considers in  his 1938 paper \cite{alexandrov} the situation in the linear set-up where in the expression of $Q$ the form $\omega^{d-2}$ is replaced by the exterior product $\omega_{1}\wedge\ldots\wedge\omega_{d-2}$ of $d-2$ strictly positive $(1,1)$-forms $\omega_{1},\ldots,\omega_{d-2}$ on $E$. He proves, by a non-trivial algebraic argument, that the signature of the corresponding form $Q_{\omega_{1}\wedge\ldots\wedge\omega_{d-2}}$ is still $(1,h^{1,1}-1)$ in this case. Similarly, in \cite{Gromov90} Gromov proves the ``K\"ahler version'' of this statement, namely that if $\omega_{1},\ldots,\omega_{d-2}$ are K\"ahler forms on the compact complex manifold $X$ then 
$$Q_{\{\omega_{1}\}\ldots\{\omega_{d-2}\}}:H^{1,1}_\R(X)\to\R, \ \{\alpha\}\mapsto\int_X\alpha^2\wedge\omega_{1}\wedge\ldots\wedge\omega_{d-2}$$
has signature $(1,h^{1,1}-1)$. 

In our previous work on this subject, we show in \cite{RossToma} that if $\lambda=(\lambda_{1},\ldots,\lambda_{N})$ is a partition of $d-2$ with $0\le\lambda_{N}\le\ldots\le\lambda_{1}\le e$, the Schur classes $s_{\lambda}(E)$ of ample vector bundles $E$ of rank $e$ on $X$ also have the Hodge-Riemann property. Here the Schur class $s_{\lambda}(E)$ is defined in terms of the Chern classes $c_i:=c_i(E)$ by
$$s_{\lambda}(E) := \det \left(\begin{array}{cccc}c_{\lambda_1} & c_{\lambda_1+1} & \cdots & c_{\lambda_1+N-1}\\
c_{\lambda_2-1} & c_{\lambda_2} & \cdots & c_{\lambda_2+N-2}\\
\vdots & \vdots & \vdots & \vdots  \\
c_{\lambda_N-N+1} & c_{\lambda_N-N+2} & \cdots & c_{\lambda_N}\\
\end{array}\right) \in H^{d-2,d-2}_{\R}(X).$$
Note that the hypothesis that $E$ is ample necessarily implies that $X$ is projective.  Moreover, if $E$ splits as a direct sum of line bundles $E=\oplus_{j=1}^{e} L_{j}$ then $s_{\lambda}(E)$ is a symmetric homogeneous polynomial in the Chern classes $a_{j}:=c_{1}(L_{j})$ of  the line bundles $L_{j}$. We will denote this polynomial by $s_{\lambda}(a_{1},\ldots,a_{e})$.   Our  result is compatible with that of Gromov when taking $e=d-2$ and the partition $\lambda = (d-2)$, for then $s_{\lambda} = c_{d-2}$ and $s_{\lambda}(E) = c_1(L_1) \wedge \cdots \wedge c_{1}(L_{d-2})$.

These considerations allow us to formulate both a linear version and a K\"ahler version of our main result in \cite{RossToma} and  the main purpose of this paper is to prove these  versions.  

\begin{theorem}\label{thm:sketch}
Let $d\ge 2$, $e\ge 1$ be two integers and $\lambda=(\lambda_{1},\ldots,\lambda_{N})$ be a partition of $d-2$ with $0\le\lambda_{N}\le\ldots\le\lambda_{1}\le e$. Let further $E$ be a $d$-dimensional complex vector space and $X$ be a $d$-dimensional compact complex manifold.
The following statements hold.
\begin{enumerate}
\item If $\omega_{1},\ldots,\omega_{e}$ are strictly positive $(1,1)$-forms on $E$ then $s_{\lambda}(\omega_{1},\ldots,\omega_{e})$ has the Hodge-Riemann property, i.e.  the quadratic form 
$$Q_{s_{\lambda}(\omega_{1},\ldots,\omega_{e})}:\bigwedge^{1,1}_\R E^*\to\R, \ \alpha\mapsto\frac{\alpha^{2}\wedge s_{\lambda}(\omega_{1},\ldots,\omega_{e})}{\vol}$$ has signature $(1,h^{1,1}-1)$.
\item If $\omega_{1},\ldots,\omega_{e}$ are K\"ahler forms on $X$ then $s_{\lambda}(\{\omega_{1}\},\ldots,\{\omega_{e}\})$ has the Hodge-Riemann property, i.e.  the quadratic form 
$$Q_{s_{\lambda}(\{\omega_{1}\},\ldots,\{\omega_{e}\})}:H^{1,1}_\R(X)\to\R, \ \{\alpha\}\mapsto\int_X\alpha^2\wedge s_{\lambda}(\omega_{1},\ldots,\omega_{e})$$ has signature $(1,h^{1,1}-1)$.
\end{enumerate}
\end{theorem}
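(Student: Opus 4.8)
The plan is to prove both statements by induction on the dimension $d$ (and, for fixed $d$, on the size of $\lambda$ and the number $e$ of forms), reducing the dimension by restriction to a hyperplane section in the manner of Gromov's treatment \cite{Gromov90} of the K\"ahler case of the mixed Hodge--Riemann relations. The base case $d=2$ forces $\lambda=\emptyset$, $s_\lambda=1$, and the statement is the classical Hodge index theorem on a surface (linear counterpart: the elementary diagonalisation recalled in the introduction); whenever the reduction lands on an honest product $\omega_{i_1}\wedge\cdots\wedge\omega_{i_{d-2}}$ of the given forms one appeals to the mixed relations for products, i.e. \cite{Gromov90} in the K\"ahler case and \cite{alexandrov} in the linear case. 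For the inductive step one peels a single factor off $s_\lambda(\omega_1,\ldots,\omega_e)$: when $\lambda$ has exactly $e$ non-zero parts, $s_\lambda(x_1,\ldots,x_e)=(x_1\cdots x_e)\,s_{\lambda-(1^e)}(x_1,\ldots,x_e)$, and in general the Jacobi--Trudi and branching identities supply such factorisations, so that $s_\lambda(\omega_1,\ldots,\omega_e)=\omega_j\wedge\Xi$ with $\Xi$ of codimension $d-3$. Cutting $X$ along a smooth hypersurface $Y$ in the class of $\{\omega_j\}$, the projection formula gives $Q^X_{s_\lambda(\omega_1,\ldots,\omega_e)}(\alpha)=Q^Y_{\Xi|_Y}(\alpha|_Y)$; as $\dim Y=d-1$ and $\Xi|_Y$ has codimension $(d-1)-2$, the inductive hypothesis applies on $Y$, and since $H^{1,1}_\R(X)\to H^{1,1}_\R(Y)$ is injective for $d\ge3$ (Lefschetz hyperplane theorem) with image meeting the positive cone of $Q^Y_{\Xi|_Y}$ --- the restriction of a K\"ahler class $\kappa$, for which $Q^Y_{\Xi|_Y}(\kappa|_Y)=\int_X\kappa^2\wedge s_\lambda(\omega_1,\ldots,\omega_e)>0$ because $s_\lambda(\omega_1,\ldots,\omega_e)$, being a non-negative combination of products of the $\omega_i$ (monomial expansion), is a positive $(d-2,d-2)$-class --- the signature $(1,h^{1,1}(Y)-1)$ of $Q^Y_{\Xi|_Y}$ forces the signature $(1,h^{1,1}(X)-1)$ for $Q^X_{s_\lambda(\omega_1,\ldots,\omega_e)}$.

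For the linear statement the rational data can alternatively be read off from \cite{RossToma}: rational strictly positive forms $\omega_i$ on $E$ descend to ample line bundles $L_i$ on an abelian variety $A$, the bundle $\bigoplus_i L_i$ is ample of rank $e$, and $s_\lambda(\bigoplus_i L_i)=s_\lambda(\{\omega_1\},\ldots,\{\omega_e\})$ has the Hodge--Riemann property on $A$; since the invariant forms span $H^{1,1}(A)$, this is precisely the linear assertion for rational data, and it guides the general argument.

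The hard part, I expect, is two-fold. First, peeling factors off $\lambda$ does not stay inside the class of single Schur polynomials: the branching and Littlewood--Richardson identities present $s_\lambda$ as a \emph{non-negative combination} of products of Schur polynomials in complementary groups of variables, and these recombine still further on a hyperplane section --- yet the Hodge--Riemann property is \emph{not} preserved under arbitrary non-negative combinations of products of K\"ahler classes (already on $\P^2\times\P^2$ one finds non-negative combinations of squares of K\"ahler classes whose pairing has two positive eigenvalues). So the induction must be carried out not for a single Schur class but for a precisely circumscribed family of ``Schur-positive'' classes closed under these operations and along which the Hodge--Riemann property persists; identifying this family and checking its closedness, positivity and Lefschetz compatibility is where the combinatorics of Schur polynomials enters in an essential way. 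Second, in the K\"ahler (hence possibly non-projective) case a general K\"ahler class $\{\omega_j\}$ is not the fundamental class of any hypersurface, so the step ``cut along $Y$'' has to be executed by the regularisation and positive-current methods developed for the K\"ahler mixed relations rather than by an honest Lefschetz pencil. Merging this analytic machinery with the combinatorial reduction is the technical core; the linear statement is the same induction at the level of multilinear algebra, where only the first difficulty is present.
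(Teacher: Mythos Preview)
Your proposal has a genuine gap at its core: the claimed factorisation $s_\lambda(\omega_1,\ldots,\omega_e)=\omega_j\wedge\Xi$ with $\Xi$ a single Schur polynomial (or even a class of the same type in one lower degree) simply does not exist in general. The special case $\lambda_e>0$ giving $s_\lambda=(x_1\cdots x_e)s_{\lambda-(1^e)}$ is fine, but ``the Jacobi--Trudi and branching identities supply such factorisations'' is not correct: Jacobi--Trudi is a determinant, not a product, and branching gives $s_\lambda(x_1,\ldots,x_e)=\sum_\mu x_e^{|\lambda/\mu|}s_\mu(x_1,\ldots,x_{e-1})$, a \emph{sum}. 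You yourself then observe that the Hodge--Riemann property is not stable under non-negative sums (your $\P^2\times\P^2$ example), so the induction as written cannot close. Your proposed fix---carry the induction over some ``Schur-positive'' family closed under these operations---is precisely the unknown; you have not identified such a family, and indeed Corollary~\ref{cor:closure} of the paper shows that not every non-negative Schur combination has the Hodge--Riemann property, so no naive choice will work.

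The second difficulty you flag is also real and unaddressed: in the K\"ahler non-projective case there is no hypersurface $Y$ in a general K\"ahler class, so ``cut along $Y$'' has no meaning, and the current-theoretic substitutes you allude to (\`a la Dinh--Nguy\^en) are not developed here.

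The paper's proof proceeds along an entirely different route, with no induction on $d$ and no hyperplane sections. One first proves (2) on a torus $T$ of maximal Picard number: there rational K\"ahler classes are dense, so by \cite{RossToma} one gets the \emph{weak} Hodge--Riemann property for $s_\lambda(\underline\omega)$ as a limit. To upgrade weak to genuine HR, the paper builds a one-parameter family of bilinear forms $\mathcal R_{i,t}$ on $H^{1,1}_\R(T)\oplus\R$ (the extra $\R$ coming from a $\P^d$ factor) and develops an abstract ``augmentation'' machine (Theorems~\ref{thm:augmentation1}, \ref{thm:recursion}, \ref{thm:augmentation2}) that propagates the HR property along this family using carefully matched first- and second-derivative identities; the required inequalities (A2), (A3), (B2), (B3) are supplied by a geometric realisation of $\mathcal R_{i,t}$ as integrals over an irreducible cone in a projective bundle, where the earlier results \cite{RossToma,RossToma2} apply. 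The linear case (1) then follows by reading the torus result at a point, and the general K\"ahler case (2) follows from (1) by a pointwise-to-global argument (Proposition~\ref{prop:pointwisetoglobal}) using an elliptic operator and Gauduchon's theorem---no hypersurface is ever chosen.

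Your remark that rational $\omega_i$ descend to ample line bundles on an abelian variety and that \cite{RossToma} then gives the linear statement for rational data is correct and is in fact the starting point of the paper's argument; but the passage from rational to arbitrary $\omega_i$ is exactly where the weak-to-strong HR upgrade is needed, and that is what your proposal does not supply.
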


\subsection*{Outline of the proof}

The stages of our proof run as follows.  We first prove (2) in the case that $X$ is a torus with maximal Picard rank.  From this one can deduce the statement (1) rather easily by taking $E$ as the tangent space at a point in such a torus and considering the parallel translates of the positive $(1,1)$ forms  $\omega_j$ on $E$ which become K\"ahler forms on $X$.  From this linear case it is possible to deduce the general case of (2) using a pointwise to global argument.

So the main work is done in proving (1) when $X$ is a torus with maximal Picard rank.    For such an $X$, any K\"ahler form can be perturbed to a rational K\"ahler form.  From our previous work, we know that if each $\omega_j$ is rational then $Q:=Q_{s_{\lambda}(\{\omega_{1}\},\ldots,\{\omega_{e}\})}$ has the  Hodge-Riemann property, and thus we deduce that even without this rationality assumption $Q$ has the {\em weak Hodge-Riemann property}, by which we mean  that it is a limit of intersection forms with the   Hodge-Riemann property.  

Of course that is not enough, so to prove that in fact $Q$ has the Hodge-Riemann property we will consider what happens when replacing each $\omega_i$ with $\omega_i + th$ for some ample class $h$ and small parameter $t$.  This leads us to various families of bilinear forms with the weak Hodge-Riemann property.  
In Section \ref{section:augmentation} we develop a linear algebra machine that considers such families and gives carefully constructed conditions under which the full Hodge-Riemann property can be shown to hold.

 We note that this proof of the linear case still uses geometry, as it relies on our previous work and thus ultimately on the Hard-Lefschetz Theorem.  It would be interesting to know if a purely linear algebra proof is possible.

\subsection{Combinations of Schur Classes}
It is natural to ask if other characteristic classes than the Schur classes given in Theorem \ref{thm:sketch} enjoy the Hodge-Riemann property.   Our linear algebra machine can also be used to give a condition that guarantees that this is the case for certain linear combinations of Schur classes.   To describe this fix $d\ge 2$,  $N\ge d-2$ and $e\ge 1$.  Then for each partition $\lambda$ of $d-2$ we can consider the following two objects
\begin{enumerate}
\item The Schur polynomial $s_{\lambda}(x_1,\ldots,x_e)$,
\item The Schubert class $C_{\lambda}$ inside the Grassmannian $\Grass(N,\mathbb C^{N+e})$ of $N$ dimensional linear subspaces of $\mathbb C^{N+e}$. 
\end{enumerate}
Similarly we can consider linear combinations of these objects.   To this end we will consider sums over all partitions $\lambda = (\lambda_1,\cdots,\lambda_n)$ of $d-2$ such that $e\ge \lambda_1 \ge \lambda_2\ge \cdots \ge \lambda_n >0$.    We order these partitions in some arbitrary but fixed way as $\lambda^{(1)},\ldots,\lambda^{(k)}$ where $k$ is the number of such partitions.

Then for non-negative real numbers $x = (x_1,\ldots,x_k)$ that sum to $1$  consider the polynomial
$$\Gamma_x := \sum_i x_i s_{\lambda^{(i)}}.$$
We say that $\Gamma_x$ has the \emph{universal Hodge-Riemann property} if for all complex manifolds $X$ of dimension $d-2$ and all ample vector bundles $E$ on $X$ of rank $e$ the bilinear form induced by the class $\Gamma_{x}(E)$ on $H^{1,1}_{\mathbb R}(X)$ has the Hodge-Riemann property.  Similarly we say that $\Gamma_x$ has the \emph{Hodge-Riemann property in the linear (resp.\ K\"ahler) case} if the analog of the conclusion of Theorem \ref{thm:sketch}(1) (resp.\ Theorem \ref{thm:sketch}(2)) holds for $\Gamma_x$.

On the other hand for $x$ as above we say that $x$ is {\em irreducibly representable} if all the $x_i$ are rational and there exist positive integers $N\ge d-2$ and $m$ such that the cycle
$$m \sum_i x_i C_{\lambda^{(i)}}$$ 
inside $\Grass(N,\mathbb C^{N+e})$ is algebraically equivalent to an irreducible cycle.  
\begin{theorem}

Let $B$ denote the set of all $x$ that are irreducible representable.  Then for all $x\in \overline{B}$ the following hold
\begin{enumerate}
\item The class $\Gamma_x$ has the universal Hodge-Riemann property.
\item The class $\Gamma_x$ has the universal Hodge-Riemann property in the linear case
\item The class $\Gamma_x$ has the universal Hodge-Riemann property in the K\"ahler case.
\end{enumerate}
\end{theorem}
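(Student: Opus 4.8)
The plan is to deduce this theorem from Theorem~\ref{thm:sketch} together with the linear algebra machine of Section~\ref{section:augmentation}, using the Schubert calculus to connect the combinatorics of the $C_{\lambda^{(i)}}$ with the algebra of the Schur polynomials $s_{\lambda^{(i)}}$. The starting observation is that if $x$ is irreducibly representable via an integer $N\ge d-2$, an integer $m$, and an irreducible cycle $Z$ algebraically equivalent to $m\sum_i x_i C_{\lambda^{(i)}}$ in $\Grass(N,\C^{N+e})$, then $Z$ determines a subvariety of the total space of the tautological (or universal quotient) bundle over any base; more precisely, pulling back along a classifying map $f\colon X\to\Grass(N,\C^{N+e})$ for a sufficiently positive vector bundle $E$ of rank $e$, one has $f^*\left(\sum_i x_i C_{\lambda^{(i)}}\right) = \tfrac1m\sum_i x_i s_{\lambda^{(i)}}(E)$ by the standard Giambelli/Schubert dictionary, so that the class $\Gamma_x(E)$ is (up to the positive scalar $1/m$) represented by an effective irreducible cycle whenever the map $f$ is chosen generically. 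The point of requiring an \emph{irreducible} representative is that our previous work in \cite{RossToma} — which established the Hodge--Riemann property for the individual Schur classes $s_\lambda(E)$ of ample bundles — admits an extension to arbitrary effective irreducible cycles of the correct codimension, because the Hard Lefschetz theorem applies to the (resolution of the) irreducible subvariety cut out by $Z$; this is exactly the input that the pure Schur class case does not give us for a general convex combination.

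The key steps, in order, are as follows. First I would record the Schubert-calculus identity expressing $f^*C_\lambda$ in terms of $s_\lambda(E)$, and verify that for $E$ ample and $N$ large one can realize $\Gamma_x(E)$, up to a positive rational scalar, by the pullback of an irreducible cycle; this handles all rational $x\in B$ in the universal (projective) case, i.e.\ item (1) for $x\in B$. Second, pass from $B$ to $\overline B$: the Hodge--Riemann property is not closed, but the \emph{weak} Hodge--Riemann property (being a limit of Hodge--Riemann forms) is, so for $x\in\overline B$ the form $Q_{\Gamma_x(E)}$ at least has the weak Hodge--Riemann property; then feed the family $\omega_i\mapsto\omega_i+th$ (equivalently, perturbing $E$ by tensoring with an ample line bundle, or deforming the Kähler classes) into the augmentation machine of Section~\ref{section:augmentation} exactly as in the proof of Theorem~\ref{thm:sketch}(1), whose hypotheses are verified here because $\Gamma_x$ is a nonnegative combination of Schur polynomials and hence inherits the requisite positivity and the behavior of the leading-order terms under the perturbation. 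This upgrades weak Hodge--Riemann to Hodge--Riemann and gives (1) for all $x\in\overline B$. Third, the torus-with-maximal-Picard-rank case of (1) specializes, by taking parallel representatives on the tangent space at a point exactly as in the Outline, to the linear statement (2); and the pointwise-to-global argument already used for Theorem~\ref{thm:sketch}(2) then yields the Kähler statement (3) from the linear one.

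The main obstacle I expect is the second step: showing that the augmentation machinery of Section~\ref{section:augmentation} genuinely applies to $\Gamma_x$ and not just to a single $s_\lambda$. The delicate point is that the machine needs precise control of how the quadratic forms $Q_{\Gamma_x(\omega_1+th,\ldots)}$ degenerate as $t\to 0$ — in particular which directions become null and at what order in $t$ — and for a convex combination this degeneration is governed by the interaction of the different partitions $\lambda^{(i)}$ appearing with positive coefficient. One has to check that no cancellation among the $s_{\lambda^{(i)}}$ destroys the positivity of the top-order coefficient that the machine requires; since all $x_i\ge 0$ and each $s_{\lambda^{(i)}}$ of an ample bundle is itself a positive class, the relevant leading terms add rather than cancel, which is what makes the argument go through, but making this rigorous in the language of Section~\ref{section:augmentation} is where the real work lies. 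A secondary subtlety is verifying that the extension of \cite{RossToma} to irreducible cycles (rather than Schur classes of bundles) is legitimate — this is essentially a matter of applying Hard Lefschetz on a resolution, but one must be careful that the cycle class, not merely a resolution, carries the Hodge--Riemann form on $H^{1,1}_\R(X)$ for the ambient $X$.
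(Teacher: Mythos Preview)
Your overall architecture matches the paper's: exploit the irreducible representative for $x\in B$, run the augmentation machine of Section~\ref{section:augmentation}, and then derive (2) and (3) from (1) via tori with maximal Picard rank and the pointwise-to-global argument. But two concrete points are off.

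First, the geometric construction is not the one that works. You propose a classifying map $f\colon X\to\Grass(N,\C^{N+e})$ and then Hard Lefschetz on a resolution of $f^{-1}(Z)$; but that would control $H^{1,1}$ of the resolved subvariety, not $H^{1,1}_\R(X)$. The paper goes the other direction: from the irreducible cycle $C(x)$ on $\Grass(N,H)$ it builds an irreducible cone $\widehat{C(x)}_E\subset\P_{\sub}(\cH om(H,E))$ mapping \emph{to} $X$, carrying the nef quotient $U$, with $\pi_*(c_{f-(d-i)}(U)|_{\widehat{C(x)}_E})$ computing the derived classes $\Gamma_x^{(d-i)}(E)$. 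Irreducibility of $C(x)$ gives irreducibility of $\widehat{C(x)}_E$, which is exactly what lets one resolve and apply Theorems~\ref{theorem:paper2} and~\ref{theorem:oldinequality}.

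Second, and this is the real gap, your justification of the augmentation hypotheses for $x\in\overline{B}$ is wrong. You say they hold ``because $\Gamma_x$ is a nonnegative combination of Schur polynomials and hence inherits the requisite positivity''. Nonnegativity does handle (A1), (A5), (B1) and the formal identities (A4), (B4), (B5). It does \emph{not} give (A2), (A3), (B2), (B3): the weak Hodge--Riemann property and the inequality of Theorem~\ref{theorem:oldinequality} for the forms $\cR_{i,t}$ on the enlarged space $V=W\oplus\R\zeta$ are not preserved under nonnegative combinations (indeed Corollary~\ref{cor:closure} exhibits nonnegative $x$ for which the conclusion fails). The paper's route is: these conditions are closed in $x$; for $x\in B$ they follow from the geometric identification $\cR_{i,t}\leftrightarrow\cS_{i,t}$ on the irreducible $\widehat{C(x)}_E$ together with Theorems~\ref{theorem:paper2} and~\ref{theorem:oldinequality} (applied on a resolution); hence they hold on $\overline{B}$. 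In other words, irreducibility is not used to prove HR for $x\in B$ first and then pass to the closure of the \emph{conclusion}; it is used inside the augmentation to verify the \emph{hypotheses}, and the closure is taken at that level.
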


We emphasize that the statement  holds for points in the closure of $B$, 
and passing from $B$ to its closure is non-trivial as the Hodge-Riemann property for bilinear forms is an open but not closed condition. 

\color{blue}\subsection*{Outline of the proof} \color{black} For points inside $B$ we use the irreducible cycle algebraically equivalent to $m \sum_i x_i C_{\lambda^{(i)}}$ to produce a morphism $\pi:\hat{C}\to X$ from an irreducible variety $\hat{C}$ and a nef vector bundle $U$ on $\hat{C}$ so that $\pi_*(c_{n-2}(U)) = \Gamma_x(E)$.   Essentially by our previous work, this is enough to prove the statement in this case.   So by continuity for every $x$ in the closure of $B$ we know that the bilinear form associated to $\Gamma_x$ has the weak Hodge-Riemann property, and we then apply the same linear algebra machinery discussed above.

\subsection*{Comparison with previous work: }  Our first work in this subject is \cite{RossToma} in which we prove the Hodge-Riemann property for Schur classes of ample bundles.   We continue this in \cite{RossToma2} in which we emphasize more the importance of the weak Hodge-Riemann property (which is much easier to prove) and from this we develop various inequalities among characteristic classes of nef vector bundles.  The linear algebra machinery we develop in this paper is an abstraction of the arguments in \cite{RossToma}.  In fact, combining what is written here with \cite{RossToma2} reproves the main results of \cite{RossToma}.

We have already mentioned Gromov's paper \cite{Gromov90} who following work of 
Alexandrov  in the linear case  \cite{alexandrov} initiated the investigation into what classes on K\"ahler manifolds have the Hard-Lefschetz and Hodge-Riemann properties.  This has since been taken up by others, for instance \cite{Cattani,DinhNguyen06,DinhNguyen13}. 

The idea that Schur classes of ample vector bundles have some kind of positivity originates in the work of Fulton-Lazarsfeld \cite{FultonLazarsfeld} who prove that top-degree Schur classes of ample vector bundles are positive.   All our results concerning Hodge-Riemann properties of characteristic classes of ample vector bundles rely on this statement.

\subsection*{Acknowledgements: }  The first author was supported by NSF grants DMS-1707661 and DMS-1749447 during this work.

\section{Preliminaries}

\subsection{Linear Algebra Notation}\label{sec:linearalgebranotation}

Let $E$ be a $d$-dimensional complex vector space.  
We denote by $E^*:=\Hom_\C(E,\C)$, $\bar E^*:=\Hom_{\C-\operatorname{antilin}}(E,\C)$ and by 
$\bigwedge^{p,q}E^*$ the spaces of $(1,0)$, $(0,1)$ and $(p,q)$-forms on $E$, respectively. Recall that $\bigwedge^{p,q}E^*$ is the image of $\bigwedge^pE^*\otimes\bigwedge^q\bar E^*$ in $\bigwedge^{p+q}(E^*\oplus \bar E^{*})$. The conjugation operator $E^* \to\bar E^*$ naturally extends for each bidegree $(p,q)$ as an operator $\bigwedge^{p,q}E^* \to \bigwedge^{q,p}E^*$ and when $p=q$ we denote by $\bigwedge^{p,p}_\R E^*$ its space of fixpoints; it is the space of real $(p,p)$-forms on $E$. Anticipating our application when $E$ will be the holomorphic tangent space at a point of a complex manifold, we denote a basis of $E$ by $(\frac{\partial}{\partial z_{1}},\ldots,\frac{\partial}{\partial z_{d}})$ and its dual basis by $(\d z_{1},\ldots,\d z_{d})$. We have a canonical orientation on $E$ given by the top degree form 
$\vol:=i\d z_{1}\wedge\d\bar z_{1}\wedge\ldots\wedge i\d z_{d}\wedge\d\bar z_{d}$.

\subsection{$\mathbb R$-twisted vector bundles}\label{sec:Rtwistedbundles}Given a vector bundle $E$ on a manifold $X$ and a class $\delta\in H^{1,1}_\mathbb R(X)$ we denote the $\mathbb R$-twisted bundle by $E\langle \delta\rangle$ which is a formal object understood to have Chern classes defined by the rule
\begin{equation}c_p(E\langle \delta \rangle) := \sum_{k=0}^p\binom{e-k}{p-k} c_k(E) \delta^{p-k} \text{ for } 0\le p\le e.\label{eq:defcherntwisted}\end{equation}
Equivalently,  if $x_1,\ldots,x_e$ are the Chern roots of $E$ then $x_1+\delta,\ldots,x_e+\delta$ are by definition the Chern roots of $E\langle \delta\rangle$.  The reader is referred to \cite[Section 6.2, 8.1.A]{Lazbook2} or \cite[Sec. 2.4]{RossToma} for the basic properties of these objects.
\

\subsection{Quadratic Forms and the (weak) Hodge-Riemann property}

Let $V$ be a real vector space of finite dimension $\rho$ and 
$$Q:V\times V\to \mathbb R$$ be a symmetric bilinear form on $V$.    We write
$ Q(v) : = Q(v,v) \text{ for } v\in V$
for the associated quadratic form.

\begin{definitionlemma}[The Hodge-Riemann property]\label{deflemma:HRofquadraticform}
Suppose there exists an $h\in V$ such that $Q(h)>0$.   Then the following statements are equivalent, in which case we say that $Q$ has the \emph{Hodge-Riemann property}.
\begin{enumerate}
\item $Q$ has signature $(1,\rho-1)$.
\item There exists a subspace of dimension $\rho-1$ in $V$ on which $Q$ is negative definite.
\item For any $h'\in V$ such that $Q(h')>0$, the restriction of $Q$ to the primitive space 
$$ V_{h'}:= \{ v\in V : Q(v,h')=0\}$$
 is negative definite.
\item For any $h'\in V$ such that $Q(h')>0$ and all $v\in V$ the \emph{Hodge-Index inequality}
\begin{equation}  Q(v)  Q(h')\le  Q(v,h')^2 \label{eq:hodgeindex}\end{equation}
holds, with equality iff $v$ is proportional to $h'$.
\end{enumerate}
\end{definitionlemma}
\begin{proof}
(1) $\Rightarrow$ (2) and (3) $\Rightarrow$ (1) and (4)$\Rightarrow$ (3) are immediate, and (1)$\Rightarrow$ (3) comes from Sylvester's law of inertia.    For (3)$\Rightarrow$ (4): Given $v\in V$ choose $\lambda$ so  $ Q(v + \lambda h',h')=0$.  By  (3), this implies $ Q(v+\lambda h', v +\lambda h')\le 0$ with equality iff $v + \lambda h'=0$.  Rearranging gives (4). \end{proof}

\begin{definition}\label{def:polarizedHR}
When $Q$ has the Hodge-Riemann property and $h\in V$ is such that $Q(h)>0$ we will say that $Q$ has the \emph{Hodge-Riemann property with respect to $h$}.
\end{definition}

Note that when $Q$ has the Hodge-Riemann property, choosing an element $h$ in the set $\{ v\in V \ | \ Q(v)>0\}$ serves to distinguish one of the two connected components of this set whose elements may be looked upon as ``positive'' vectors for $Q$. 

\begin{definition}\label{def:weakHR}
We will say that $Q$ has the \emph{weak Hodge-Riemann property} if $Q$ is a limit of symmetric bilinear forms on $V$ that have the Hodge-Riemann property. If moreover an element $h\in V$ exists such that $Q(h)>0$, we will say that $Q$ has the \emph{ weak Hodge-Riemann property with respect to $h$}.
\end{definition}

\begin{lemma}\label{lem:wHR}  Let $Q$ be symmetric bilinear form on $V$ as before and suppose that an element $h\in V$ exists such that $Q(h)>0$. Then $Q$ has the weak Hodge-Riemann property with respect to $h$ if and only if 
$$ Q(v)  Q(h)\le  Q(v,h)^2 \ \text{ for all } v\in V.$$
\end{lemma}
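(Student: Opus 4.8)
The plan is to prove both implications directly; the forward implication is essentially immediate from Definition-Lemma \ref{deflemma:HRofquadraticform}, and all the content sits in the converse. Note first that since $Q(h)>0$ is a standing hypothesis of the lemma, ``$Q$ has the weak Hodge-Riemann property with respect to $h$'' simply means ``$Q$ is a limit of forms with the Hodge-Riemann property.''

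For the direction ``weak Hodge-Riemann $\Rightarrow$ inequality'', I would write $Q=\lim_{n} Q_n$ with each $Q_n$ having the Hodge-Riemann property. Since $Q(h)>0$ and $Q_n\to Q$, we have $Q_n(h)>0$ for all sufficiently large $n$, so for such $n$ the form $Q_n$ has the Hodge-Riemann property with respect to $h$. Applying Definition-Lemma \ref{deflemma:HRofquadraticform}(4) with $h'=h$ gives the Hodge-Index inequality $Q_n(v)Q_n(h)\le Q_n(v,h)^2$ for every $v\in V$; letting $n\to\infty$ yields $Q(v)Q(h)\le Q(v,h)^2$.

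For the converse, assume $Q(v)Q(h)\le Q(v,h)^2$ for all $v$ and $Q(h)>0$. The key observation is that the orthogonal hyperplane $W:=\{v\in V: Q(v,h)=0\}$ gives a direct sum decomposition $V=\R h\oplus W$, because the functional $Q(\cdot,h)$ is nonzero (as $Q(h,h)>0$), and moreover $Q$ splits as a direct sum along this decomposition since $Q(h,w)=0$ for $w\in W$. Feeding $v\in W$ into the hypothesis gives $Q(v)Q(h)\le 0$, hence $Q(v)\le 0$; that is, $Q|_W$ is negative semidefinite. It then remains to perturb away the possible degeneracy of $Q|_W$ without touching the positive direction: fix any positive definite symmetric bilinear form $P$ on $W$, extend it to $V$ by declaring $P(h,\cdot)=0$, and set $Q_\varepsilon:=Q-\varepsilon P$ for $\varepsilon>0$. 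Then $Q_\varepsilon$ is still a direct sum along $\R h\oplus W$, is positive definite on the line $\R h$ (since $Q_\varepsilon(h)=Q(h)>0$) and strictly negative definite on $W$, so it has signature $(1,\rho-1)$ and hence the Hodge-Riemann property. Since $Q_\varepsilon\to Q$ as $\varepsilon\to 0^+$ and $Q_\varepsilon(h)=Q(h)>0$, we conclude that $Q$ has the weak Hodge-Riemann property with respect to $h$.

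I do not expect a genuine obstacle here; the only point meriting a moment's care is checking that $Q$ (and each $Q_\varepsilon$) really diagonalizes into a one-dimensional positive block together with a negative(-semi)definite block on $W$, which relies precisely on $W$ being a linear complement to $\R h$ and on the vanishing $Q(h,W)=0$. Everything else reduces to the continuity of the Hodge-Index inequality under limits and the elementary fact that the Hodge-Riemann property is exactly ``signature $(1,\rho-1)$''.
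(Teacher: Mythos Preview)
Your proof is correct and follows essentially the same approach as the paper. Both arguments pass to the $Q$-orthogonal decomposition $V=\R h\oplus W$ (the paper phrases this as ``diagonalizing $Q$ with respect to a basis having $h$ as first vector'') and use the Hodge-Index inequality to conclude that $Q|_W$ is negative semidefinite; you are simply more explicit than the paper in writing down the approximating family $Q_\varepsilon=Q-\varepsilon P$, whereas the paper just asserts that the weak Hodge-Riemann property follows once the diagonal entries beyond the first are seen to be non-positive.
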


\begin{proof} If 
$Q$ satisfies the condition $Q(h)>0$ and is also the limit of a sequence $(Q_n)_{n\in\N}$ of bilinear forms having the Hodge-Riemann property, then we may clearly suppose that the forms $Q_n$ have the Hodge-Riemann property with respect to $h$ and thus they satisfy the Hodge-Index inequality  $ Q_n(v)  Q_n(h)\le  Q_n(v,h)^2$ for all $v\in V$. It follows that $Q$ satisfies the Hodge-Index inequality too. 

Conversely, if $Q$ satisfies the condition $Q(h)>0$ and  the Hodge-Index inequality $ Q(v)  Q(h)\le  Q(v,h)^2$ for all  $v\in V$, then diagonalizing $Q$ with respect to a basis having $h$ as first vector leads to a form where all  diagonal entries but the first one are non-positive. Thus $Q$ has the weak Hodge-Riemann property with respect to $h$. 
 \end{proof}

 \subsection{Previous Results for Vector Bundles}

The proof we give of our main result will depend on our previous work on Schur classes of vector bundles. Here we state and sketch the proofs of two results from \cite{RossToma} and \cite{RossToma2} which will be used in an essential way in  Proposition \ref{prop:Q'nefFnonpositive}. (In fact we will use slight generalizations that allow the base space to be irreducible rather than smooth.)

 
\begin{theorem}\label{theorem:paper2}\cite[Theorem 7.2]{RossToma2} 
Let $X$ be a complex projective manifold of dimension $d$, let $E$ be a $\mathbb Q$-twisted nef vector bundle on $X$ and let $\lambda$ be a partition of $d-2$.   Then   the quadratic form 
$$Q_{s_{\lambda}(E)}:H^{1,1}_\R(X)\to\R, \ \{\alpha\}\mapsto\int_X\alpha^2\wedge s_{\lambda}(E)$$ has the weak Hodge-Riemann property.
\end{theorem}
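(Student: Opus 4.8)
The plan is to realise $s_\lambda(E)$ as a pushforward of a product of nef $(1,1)$-classes from a flag bundle over $X$, to reduce the statement to Gromov's mixed Hodge-Riemann theorem on that flag bundle, and to transport the conclusion back to $X$ using the characterisation of the weak Hodge-Riemann property in Lemma~\ref{lem:wHR}.

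First I would invoke the construction underlying the Fulton-Lazarsfeld positivity theorem \cite{FultonLazarsfeld} (in essentially the form set up in \cite{RossToma}; see also \cite[\S 8.1]{Lazbook2}): for the $\Q$-twisted nef bundle $E$ and the partition $\lambda$ of $d-2$ there is a smooth projective variety $F$, a surjective morphism $\pi\colon F\to X$, and nef classes $\eta_1,\dots,\eta_m\in H^{1,1}_\R(F)$ with $m=\dim_\C F-2$, such that $\pi_*(\eta_1\wedge\cdots\wedge\eta_m)=s_\lambda(E)$. For an honest nef bundle one takes $F$ to be a flag bundle of $E$ and builds the $\eta_i$ from Chern classes of tautological quotient bundles, which are nef because $E$ is; the $\Q$-twisted case is obtained by twisting these classes by $\pi^*$ of the twisting class, which remains nef because $E$ does. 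The projection formula then gives
$$Q_{s_\lambda(E)}(\alpha,\beta)=\int_X\alpha\wedge\beta\wedge s_\lambda(E)=\int_F(\pi^*\alpha)\wedge(\pi^*\beta)\wedge\eta_1\wedge\cdots\wedge\eta_m,$$
so that $Q_{s_\lambda(E)}$ is the pullback along $\pi^*\colon H^{1,1}_\R(X)\to H^{1,1}_\R(F)$ of the form $Q^F\colon\gamma\mapsto\int_F\gamma^2\wedge\eta_1\wedge\cdots\wedge\eta_m$ on $H^{1,1}_\R(F)$.

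Next I would show that $Q^F$ has the weak Hodge-Riemann property. Fixing a K\"ahler class $\omega_F$ on $F$, for each $\varepsilon>0$ the $m=\dim_\C F-2$ classes $\eta_1+\varepsilon\omega_F,\dots,\eta_m+\varepsilon\omega_F$ are K\"ahler, so Gromov's theorem \cite{Gromov90} applies and shows that the form $\gamma\mapsto\int_F\gamma^2\wedge(\eta_1+\varepsilon\omega_F)\wedge\cdots\wedge(\eta_m+\varepsilon\omega_F)$ on $H^{1,1}_\R(F)$ has the Hodge-Riemann property; letting $\varepsilon\to0$ exhibits $Q^F$ as a limit of Hodge-Riemann forms. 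To transport this to $X$, suppose first that there is an $h\in H^{1,1}_\R(X)$ with $Q_{s_\lambda(E)}(h)>0$; then $Q^F(\pi^*h)>0$, so by Lemma~\ref{lem:wHR} on $F$ we have $Q^F(v)\,Q^F(\pi^*h)\le Q^F(v,\pi^*h)^2$ for all $v\in H^{1,1}_\R(F)$, and taking $v=\pi^*\alpha$ yields the Hodge-Index inequality for $Q_{s_\lambda(E)}$ with respect to $h$; a second application of Lemma~\ref{lem:wHR}, now on $X$, then shows $Q_{s_\lambda(E)}$ has the weak Hodge-Riemann property. If instead $Q_{s_\lambda(E)}$ is negative semi-definite, it suffices to rule out its being negative definite: by Fulton-Lazarsfeld the class $s_\lambda(E)$ is numerically non-negative, so intersecting with two general members of a very ample linear system gives $\int_X h^2\wedge s_\lambda(E)\ge0$ for any ample $h$, so $Q_{s_\lambda(E)}$ is not negative definite; since a negative semi-definite form with non-trivial kernel is a limit of forms of signature $(1,\dim H^{1,1}_\R(X)-1)$, the form $Q_{s_\lambda(E)}$ again has the weak Hodge-Riemann property.

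The main obstacle is the first step: producing the presentation $s_\lambda(E)=\pi_*(\eta_1\wedge\cdots\wedge\eta_m)$ with $F$ smooth projective and the $\eta_i$ genuinely nef $(1,1)$-classes, uniformly across $\Q$-twisted nef bundles (and, in the variant used later in the paper, with $X$ only irreducible rather than smooth). Once that is available, the appeal to Gromov's theorem and the $\varepsilon\to0$ limit will be routine, and the passage back to $X$ is just the bookkeeping of Lemma~\ref{lem:wHR} together with Fulton-Lazarsfeld non-negativity.
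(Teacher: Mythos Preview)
Your strategy is sound once the first step is in place, but that step is a genuine gap rather than a routine appeal to \cite{FultonLazarsfeld} or \cite{Lazbook2}. The cone construction underlying Fulton--Lazarsfeld positivity (as used in \cite{RossToma} and recalled in Section~\ref{subsec:geometric}) does not produce what you claim: it gives an irreducible, generally \emph{singular} subvariety $\hat{C}_E\subset\mathbb{P}_{\sub}(F)$ and a nef bundle $U$ of rank $n-2=\dim\hat{C}_E-2$ with $\pi_*(c_{n-2}(U)|_{\hat{C}_E})=s_\lambda(E)$. Even after resolving singularities one is left with a top Chern class of a nef bundle, not a product of nef $(1,1)$-classes. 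Passing to a flag bundle of $E$ does not help either: the successive tautological quotient bundles $Q_i$ are indeed nef, but the line bundles carrying the individual Chern roots are sub-quotients and are not nef in general (already for $E=\mathcal{O}^{\oplus 2}$ one of them is $\mathcal{O}(-1)$ on $\mathbb{P}^1$), so $s_\lambda$ cannot be written as a monomial in nef $(1,1)$-classes this way. No presentation of the form $s_\lambda(E)=\pi_*(\eta_1\cdots\eta_m)$ with $F$ smooth and the $\eta_i$ nef $(1,1)$-classes is known for general Schur classes, and your argument needs exactly this.

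The paper therefore takes a different route for the base case. After the cone reduction one must still prove the weak Hodge--Riemann property for $c_{n-2}(U)$ with $U$ nef of rank $n-2$; this is done not by Gromov but by a continuity argument: one twists to $U\langle th\rangle$ for $t>0$, invokes the Bloch--Gieseker theorem (which uses Hard Lefschetz on $\mathbb{P}(U)$) to see that $Q_{c_{n-2}(U\langle th\rangle)}$ is non-degenerate for all $t>0$, hence of constant signature, and then lets $t\to\infty$ to compare with the classical Hodge--Riemann relations for $h^{n-2}$. Your transport step through $\pi^*$ and Lemma~\ref{lem:wHR}, and your treatment of the semi-definite case, are correct and are essentially how the paper moves the conclusion from $\hat{C}_E$ back to $X$.
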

\begin{sketchproof}
Rather than repeat the proof here we merely indicate the main ingredients.   The first step is to observe that if $h$ is an ample class on $X$ and $E$ is ample then  $Q_{s_{\lambda}(E)}(h)>0$ by  Fulton-Lazarsfeld \cite{FultonLazarsfeld}.

We start by proving the statement when $E$ has  rank $d-2$ and for the Chern class $c_{d-2}(E)$.  This is done by a continuity argument, replacing $E$ with the $\mathbb R$-twist $E\langle th\rangle$ where $h$ is an ample class on $X$ and $t>0$ is real.        By the Bloch-Gieseker Theorem (which relies on the Hard-Lefschetz Theorem on  the projective bundle $\mathbb P(E)$) we know that $Q_{c_{d-2}(E\langle th \rangle)}$ is always non-degenerate, and so its signature does not change as $t>0$ varies.    But the classical Hodge-Riemann bilinear relations for $h^{d-2}$ imply that this signature is of the expected form for $t$ large, completing the proof in this case.   

The general case then follows from this by a geometric construction that provides a morphism $\pi:\hat{C}_E\to X$ from an irreducible variety $\hat{C}_E$ of dimension $n$ and a nef bundle $U$ on $\hat{C}_E$ of rank $n-2$ so that $\pi_* c_{n-2}(U) = s_{\lambda}(E)$  (see Section \ref{subsec:geometric} for a brief discussion of this construction in the case that $E$ splits as a sum of line bundles).  Then the weak Hodge-Riemann property for $s_{\lambda}(E)$ follows from that for $c_{n-2}(U)$.
\end{sketchproof}\\

Another result that we will need is the following inequality on Chern classes of ample vector bundles.

\begin{theorem}\cite[Theorem 3.2]{RossToma}\cite[Theorem 10.2]{RossToma2}\label{theorem:oldinequality}
Let $X$ be a complex projective manifold of dimension $d$, let $E$ be a  nef vector bundle on $X$ and $h$ be an ample class on $X$.   Then for all $\alpha\in H^{1,1}(X)$ it holds that
\begin{equation}\int_X \alpha^2 c_{d-2}(E) \int_X h c_{d-1}(E) \le 2 \int_X \alpha h c_{d-2}(E) \int_X \alpha c_{d-1}(E).\label{eq:higherchernclassinequality}\end{equation}
\end{theorem}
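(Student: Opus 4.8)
The plan is to reduce this inequality to the weak Hodge-Riemann property for the Chern class $c_{d-2}$ of a nef bundle, which follows from Theorem \ref{theorem:paper2} applied with the trivial partition $\lambda = (d-2)$ (or can be cited directly from the references given). First I would observe that the stated inequality is essentially an unpacking of the Hodge-Index inequality \eqref{eq:hodgeindex} for the quadratic form $Q := Q_{c_{d-2}(E)}$ on $H^{1,1}_{\mathbb R}(X)$, tested against the pair of classes $h$ and $\alpha$; indeed, the left side of \eqref{eq:higherchernclassinequality} reads $Q(\alpha,\alpha)\, Q(h,h)$ after noting $\int_X h\, c_{d-1}(E) = \int_X h^2 c_{d-2}(E')$ for an appropriate twist — more precisely, I would first massage the mixed terms $\int_X h\, c_{d-1}(E)$ and $\int_X \alpha\, c_{d-1}(E)$ so that everything is expressed through $c_{d-2}$ of an $\mathbb R$-twisted bundle, using the identity $c_{d-1}(E\langle th\rangle) = c_{d-1}(E) + (e-d+2) t\, h\, c_{d-2}(E) + O(t^2)$ coming from \eqref{eq:defcherntwisted}. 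The factor of $2$ on the right-hand side is the signal that the naive substitution is off by a binomial coefficient, and reconciling it will be the bookkeeping heart of the argument.

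Concretely, here is the sequence of steps I would carry out. (i) Reduce to the case $\operatorname{rk} E = d-2$: by the usual trick of pulling back to a Grassmann bundle (as in the sketch proof of Theorem \ref{theorem:paper2}, or directly via the splitting principle and a limiting argument), one can assume $E$ splits as a sum of $d-2$ nef line bundles, or even that $E$ has rank exactly $d-2$, since both sides of \eqref{eq:higherchernclassinequality} only involve $c_{d-2}$ and $c_{d-1}$. (ii) For such $E$, consider the one-parameter family $E_t := E\langle th\rangle$ and expand $c_{d-1}(E_t) = c_{d-1}(E) + 2t\, h\, c_{d-2}(E) + O(t^2)$ — here the coefficient is exactly $2 = e - (d-2) + 1$ when $e = d-1$... rather, one uses that for a rank $d-2$ bundle $c_{d-1}(E) = 0$ and $c_{d-1}(E_t) = t\, h\, c_{d-2}(E_t) + \dots$; track the binomial coefficients from \eqref{eq:defcherntwisted} carefully to see that the "$2$" is precisely what \eqref{eq:defcherntwisted} produces. (iii) Apply the Hodge-Index inequality from Definition-Lemma \ref{deflemma:HRofquadraticform}(4), valid because $Q_{c_{d-2}(E_t)}$ has the (weak, hence for the inequality, sufficient) Hodge-Riemann property with respect to the ample class $h$ for $t$ small by Theorem \ref{theorem:paper2} and Lemma \ref{lem:wHR}, obtaining $Q_t(\alpha)\, Q_t(h) \le Q_t(\alpha,h)^2$. (iv) Take the derivative at $t = 0$, or simply equate the $O(t)$ coefficients of both sides (the $O(1)$ terms give the degenerate inequality $0 \le 0$ for rank $d-2$ bundles since then $c_{d-1} = 0$ on the nose), and read off \eqref{eq:higherchernclassinequality}.

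The main obstacle I anticipate is step (ii)–(iv): getting the constants exactly right. The inequality has an asymmetry — a factor $2$ on one side and none on the other — which means it cannot come from a single clean Hodge-Index inequality but rather from differentiating a family of such inequalities, and the differentiation has to land precisely on the mixed terms $\int h\, c_{d-1}(E)$ and $\int \alpha\, c_{d-1}(E)$ while the $Q_t(\alpha,h)^2$ term contributes a cross term $2\, \big(\int \alpha h\, c_{d-2}(E)\big)\big(\int \alpha\, c_{d-1}(E)\big)$ upon expansion. Verifying that the $t$-linear coefficient of the right-hand side is exactly $2$ times the product of mixed terms, using the twisted Chern class formula \eqref{eq:defcherntwisted}, is the one place where a sign or a binomial coefficient could easily go wrong, so I would do that computation with care. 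A secondary subtlety is that Theorem \ref{theorem:paper2} as stated requires $X$ projective and $E$ $\mathbb Q$-twisted nef; for the nef (not necessarily ample) case one uses a further limiting argument, replacing $E$ by $E\langle \varepsilon h\rangle$ and letting $\varepsilon \to 0^+$, which is harmless since \eqref{eq:higherchernclassinequality} is a closed condition.
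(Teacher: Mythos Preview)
Your differentiation scheme has a directional error that makes the argument fail. When you expand $c_{d-2}(E\langle th\rangle)$ via \eqref{eq:defcherntwisted} you obtain
\[
c_{d-2}(E\langle th\rangle)=c_{d-2}(E)+(e-d+3)\,th\,c_{d-3}(E)+O(t^{2}),
\]
so the $t$-derivative of your quadratic form $Q_t=Q_{c_{d-2}(E\langle th\rangle)}$ involves $h\,c_{d-3}(E)$, a \emph{lower} Chern class, and never $c_{d-1}(E)$. The expansion of $c_{d-1}(E_t)$ you write in step (ii) is correct in spirit, but that class does not enter the form $Q_t$ you use in step (iii), so steps (iii)--(iv) cannot produce the terms $\int_X h\,c_{d-1}(E)$ and $\int_X\alpha\,c_{d-1}(E)$ that appear in \eqref{eq:higherchernclassinequality}. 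Your reduction to $\operatorname{rk}E=d-2$ also goes the wrong way: in that case $c_{d-1}(E)=0$ and the inequality becomes $0\le 0$; as the paper notes, the statement is only nontrivial when $\operatorname{rk}E\ge d-1$.

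The paper's argument instead raises the dimension. One passes to $X'=X\times\mathbb P^{1}$ with the nef bundle $E'=E\boxtimes\mathcal O_{\mathbb P^{1}}(1)$ and applies Theorem~\ref{theorem:paper2} to the form $Q_{c_{d-1}(E')}$ on $H^{1,1}_{\mathbb R}(X')=H^{1,1}_{\mathbb R}(X)\oplus\mathbb R\zeta$. Since $\zeta^{2}=0$ one has exactly $c_{d-1}(E')=c_{d-1}(E)+(e-d+2)\,c_{d-2}(E)\,\zeta$, so a single Hodge-index inequality on $X'$ simultaneously couples $c_{d-1}(E)$ and $c_{d-2}(E)$ on $X$; evaluating it at a suitable class of the form $\alpha+\lambda\zeta$ against $h$ yields \eqref{eq:higherchernclassinequality}, with the factor $2$ arising from the cross term $2\lambda\,Q(\alpha,\zeta)$ in the expansion of $Q(\alpha+\lambda\zeta)$ rather than from a binomial coefficient in \eqref{eq:defcherntwisted}.
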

\begin{sketchproof}
The statement is trivial unless the rank of $e$ is at least $d-1$.   Consider the product $X':= X\times \mathbb P^1$ and the bundle $E': = E\boxtimes \mathcal O_{\mathbb P^1}(1)$.   Then Theorem \ref{theorem:paper2} says that bilinear form $Q_{c_{d-1}(E')}$ has the weak  Hodge-Riemann property on $H^{1,1}_\mathbb R(X') = H^{1,1}_{\mathbb R}(X)\oplus \mathbb R$, so in particular satisfies the Hodge-index inequality.  Applying this to a suitable class in $H^{1,1}_{\mathbb R}(X')$ (see the proof of \cite[Theorem 10.2]{RossToma2}) gives \eqref{eq:higherchernclassinequality}.
\end{sketchproof}

\section{Augmentation}\label{section:augmentation}

Suppose that $\mathcal R_t$ for $t\in \mathbb R$ is a family of bilinear forms on a fixed vector space $V$ each of which has the weak Hodge-Riemann property.  In this section we describe conditions on this family under which it is possible to deduce that $\mathcal R_0$ actually has the Hodge-Riemann property (perhaps after restricting to a certain subspace of $V$).  For lack of a better term we call this process ``augmentation" which is, in the end, nothing but a formal piece of linear algebra.  

To describe this more precisely, let $W$ be a finite dimensional vector space and consider
$$ V: = W\oplus \mathbb R.$$
To ease notation, let $\zeta$ be a vector that spans the extra factor of $\mathbb R$, so each $\beta\in V$ can be written as 
$$\beta = \alpha + \lambda \zeta \text{ for } \alpha\in W \text{ and } \lambda\in \mathbb R.$$
Now suppose $\mathcal R_t$ for $t\in \mathbb R$ is  a family of bilinear forms on  $V$ that we assume is differentiable with respect to $t$, and write $\mathcal R'_t$ for the derivative.  We also fix some non-zero element $h\in W$.

\subsection{Augmentation 1}
\begin{definition}
 We say that $\mathcal R_t$  \emph{has property (A)} if the following hold
\begin{enumerate}[label=(A{\arabic*}), ref=A{\arabic*}]
\item  \label{A1}  $\mathcal R_0(h)>0$ and $\mathcal R'_0(h)>0$.
\item  \label{A2} For $|t|\ll 1$  $\mathcal R_t$ has the weak Hodge-Riemann property with respect to $h$, i.e.
$$\mathcal R_t(\beta) \mathcal R_t(h) \le \mathcal R_t(\beta,h)^2 \text{ for all } \beta\in V. $$
\item \label{A3}  The following inequality holds 
$$ \mathcal R'_0(\beta) \mathcal R_0(h) \le 2 \mathcal R'_0(\beta,h) \mathcal R_0(\beta,h) \text{ for all } \beta\in V.$$ 
\item \label{A4}{} There exists some $c=c_{\cR}\in\R$ 
 such that $\mathcal R'_0(\beta,\zeta) = c\mathcal R_0(\beta, h)$ for all $\beta \in V$.
\item \label{A5}{}  $\mathcal R_0(\zeta, h)>0$.
\end{enumerate}
\end{definition}

\begin{remark}
The reader should just consider these as abstract properties of the family $\mathcal R_t$.  In our application later  (A2) will follow from Theorem \ref{theorem:paper2} and (A3) will follow from Theorem \ref{theorem:oldinequality}.
\end{remark}

\begin{theorem}\label{thm:augmentation1}
Let $\mathcal R_t$ be family bilinear forms on $V$ that has property (A).    If $\mathcal R'_0$ has the Hodge-Riemann property with respect to $h$ then $\mathcal R_0$ also has the Hodge-Riemann property with respect to $h$.
\end{theorem}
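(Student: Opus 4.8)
The plan is to show that $\mathcal R_0$ satisfies condition (4) of Definition-Lemma \ref{deflemma:HRofquadraticform}, namely the Hodge-Index inequality $\mathcal R_0(\beta)\mathcal R_0(h)\le \mathcal R_0(\beta,h)^2$ for all $\beta\in V$, \emph{with equality only when $\beta$ is proportional to $h$}. Since (A2) already supplies the inequality (the weak Hodge-Riemann property with respect to $h$), the entire content is to rule out nontrivial equality cases. So suppose $\beta\in V$ realizes equality, $\mathcal R_0(\beta)\mathcal R_0(h)=\mathcal R_0(\beta,h)^2$, with $\beta$ not proportional to $h$; the goal is a contradiction. Throughout I will use that $\mathcal R_0(h)>0$ by (A1), so after replacing $\beta$ by a multiple and adding a multiple of $h$ we may normalize to a convenient form; the natural normalization is $\mathcal R_0(\beta,h)=0$ (using $\mathcal R_0(h)>0$ we can always arrange this by $\beta\mapsto \beta-\tfrac{\mathcal R_0(\beta,h)}{\mathcal R_0(h)}h$, which preserves the equality case and non-proportionality), so that the equality case reads $\mathcal R_0(\beta)=0$ with $\beta\notin\mathbb R h$.

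\textbf{The one-parameter family argument.} With such a $\beta$ fixed, consider the function $t\mapsto \mathcal R_t(\beta+s\zeta)$, or more efficiently analyze the quadratic form $\mathcal R_t$ restricted to the plane $P=\mathrm{span}(\beta,h)$ together with how the $\zeta$-direction enters at first order. The key is to exploit (A4): the derivative $\mathcal R'_0(\,\cdot\,,\zeta)=c\,\mathcal R_0(\,\cdot\,,h)$ is controlled by $\mathcal R_0(\,\cdot\,,h)$, which \emph{vanishes on $\beta$}. Combined with (A5), $\mathcal R_0(\zeta,h)>0$, this says that moving in the $\zeta$ direction, to first order in $t$, interacts with $h$ (nontrivially, since $\mathcal R'_0(h,\zeta)=c\,\mathcal R_0(h,h)>0$ forces $c>0$) but is ``orthogonal'' to $\beta$. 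The plan is then to apply (A3) — the inequality $\mathcal R'_0(\gamma)\mathcal R_0(h)\le 2\mathcal R'_0(\gamma,h)\mathcal R_0(\gamma,h)$ — to a cleverly chosen $\gamma$ in $\mathrm{span}(\beta,\zeta)$, say $\gamma=\beta+s\zeta$. For such $\gamma$ one has $\mathcal R_0(\gamma,h)=\mathcal R_0(\beta,h)+s\mathcal R_0(\zeta,h)=s\mathcal R_0(\zeta,h)$, which is linear in $s$ and controllably small, while $\mathcal R'_0(\gamma,h)$ is some affine function of $s$; feeding this into (A3) and then using (A2) for small $t\ne 0$ (a second-order Taylor expansion of $\mathcal R_t(\gamma)\mathcal R_t(h)\le \mathcal R_t(\gamma,h)^2$ around $t=0$, whose zeroth order is the equality case and whose first order is controlled by A3/A4) should pin down $\mathcal R'_0(\beta)$ and force, via the hypothesis that $\mathcal R'_0$ has the Hodge-Riemann property with respect to $h$, that $\beta$ is proportional to $h$ — contradiction.

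\textbf{Making the Taylor expansion precise.} More concretely: fix $\beta$ with $\mathcal R_0(\beta)=\mathcal R_0(\beta,h)=0$, $\beta\notin\mathbb R h$, and for a parameter $s$ set $\gamma_s:=\beta+s\zeta$. For each small $t$, (A2) gives $f(s,t):=\mathcal R_t(\gamma_s,h)^2-\mathcal R_t(\gamma_s)\mathcal R_t(h)\ge 0$. At $t=0$: $\mathcal R_0(\gamma_s,h)=s\mathcal R_0(\zeta,h)$, $\mathcal R_0(\gamma_s)=s^2\mathcal R_0(\zeta)+2s\mathcal R_0(\beta,\zeta)$, so $f(s,0)=s^2\big(\mathcal R_0(\zeta,h)^2-\mathcal R_0(\zeta)\mathcal R_0(h)\big)-2s\mathcal R_0(\beta,\zeta)\mathcal R_0(h)$. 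The linear-in-$s$ term must be handled — this is where one chooses $s$ of the right sign, or first replaces $\zeta$ by $\zeta-\tfrac{\mathcal R_0(\beta,\zeta)}{\mathcal R_0(h)}h$ (note this changes $\mathcal R'_0(\,\cdot\,,\zeta)$ only by a multiple of $\mathcal R_0(\,\cdot\,,h)$, so (A4) is preserved, and it changes $\mathcal R_0(\zeta,h)$ by $0$, so (A5) is preserved) to kill it, reducing to $\mathcal R_0(\beta,\zeta)=0$. Then $f(s,0)=s^2(\mathcal R_0(\zeta,h)^2-\mathcal R_0(\zeta)\mathcal R_0(h))\ge 0$ automatically. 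Differentiating $f(s,\cdot)$ in $t$ at $0$ and using (A4) to simplify the $\zeta$-cross terms, the condition $f(s,t)\ge 0$ for all small $t$ and the vanishing/structure at $t=0$ forces $\partial_t f(s,0)=0$ when $f(s,0)=0$, and then $\partial_t^2 f(s,0)\ge 0$; unwinding these using (A3) yields $\mathcal R'_0(\beta)\mathcal R_0(h)\le \mathcal R'_0(\beta,h)^2$ together with the fact that $\mathcal R'_0(\beta,h)$ can be shown to vanish or be matched, so that equality holds in the Hodge-Index inequality for $\mathcal R'_0$ at $\beta$; since $\mathcal R'_0$ has the Hodge-Riemann property with respect to $h$ (by hypothesis, and $\mathcal R'_0(h)>0$ by (A1)), Definition-Lemma \ref{deflemma:HRofquadraticform}(4) forces $\beta\in\mathbb R h$, the desired contradiction.

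\textbf{Main obstacle.} I expect the genuinely delicate point to be the bookkeeping in the Taylor expansion: correctly extracting from ``$f(s,t)\ge 0$ near $t=0$ with $f$ vanishing at the boundary of the relevant region'' the right first- and second-order consequences, and verifying that the cross terms involving $\mathcal R'_0(\,\cdot\,,\zeta)$ collapse exactly as (A4) predicts so that everything reduces to an inequality purely about $\mathcal R'_0$ on $W$ to which the hypothesized Hodge-Riemann property applies. In particular one must be careful that the normalizations (subtracting multiples of $h$ from $\beta$ and from $\zeta$) are all simultaneously consistent with (A4) and (A5) — which they are, since both conditions only see things modulo $\mathcal R_0(\,\cdot\,,h)$ — and that $c>0$, forced by $\mathcal R'_0(h,\zeta)=c\,\mathcal R_0(h)$ being positive, which one gets by noting $\mathcal R'_0(h,h)>0$ from (A1) is not quite it; rather one uses (A4) with $\beta=h$ and needs $\mathcal R'_0(h,\zeta)$'s sign, which may require a small additional argument or may not be needed at all if the computation is arranged to avoid dividing by $c$.
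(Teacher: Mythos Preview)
Your core idea is right and is in fact very close to the paper's argument, but the execution is muddled and contains one actual mistake. Let me disentangle.

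\textbf{What actually works.} After normalizing so that $\mathcal R_0(\beta,h)=0=\mathcal R_0(\beta)$, the function $f(t)=\mathcal R_t(\beta)\mathcal R_t(h)-\mathcal R_t(\beta,h)^2$ has $f\le 0$ and $f(0)=0$, so $f'(0)=0$ gives $\mathcal R'_0(\beta)=0$. Next, by (A4), $\mathcal R'_0(\beta,\zeta)=c\,\mathcal R_0(\beta,h)=0$. Now apply (A3) to $\gamma_s=\beta+s\zeta$: setting $g(s):=\mathcal R'_0(\gamma_s)\mathcal R_0(h)-2\mathcal R'_0(\gamma_s,h)\mathcal R_0(\gamma_s,h)$, one computes $g(0)=0$ and $g'(0)=-2\mathcal R'_0(\beta,h)\mathcal R_0(\zeta,h)$ (the terms involving $\mathcal R'_0(\beta,\zeta)$ and $\mathcal R_0(\beta,h)$ drop out). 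Since $g\le 0$ and $g(0)=0$, we get $g'(0)=0$, hence $\mathcal R'_0(\beta,h)\mathcal R_0(\zeta,h)=0$, and (A5) forces $\mathcal R'_0(\beta,h)=0$. Now $\mathcal R'_0(\beta)=0=\mathcal R'_0(\beta,h)$ with $\mathcal R'_0$ having the Hodge--Riemann property gives $\beta=0$. Done.

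\textbf{Comparison with the paper.} The paper runs the same Step~1, then introduces an abstract dual element $\gamma$ with $\mathcal R'_0(\cdot,\gamma)=\mathcal R_0(\cdot,h)$, applies (A3) along $\beta+s\gamma$, and then goes through a case analysis (Step~4, using an auxiliary Lemma) and a further Step~5 invoking (A4)--(A5). Your route is more direct: using $\zeta$ in place of $\gamma$ (note that (A4) says precisely that $\zeta$ is, up to the scalar $c$, the paper's $\gamma$) and invoking (A5) immediately kills the second case and renders the auxiliary lemma and Step~5 unnecessary.

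\textbf{What is wrong or superfluous in your writeup.} The two-variable function $f(s,t)$ and second-order Taylor analysis are red herrings: only the first $t$-derivative at $s=0$ (to get $\mathcal R'_0(\beta)=0$) and the first $s$-derivative of (A3) are needed. Your proposed replacement $\zeta\mapsto \zeta-\tfrac{\mathcal R_0(\beta,\zeta)}{\mathcal R_0(h)}h$ does \emph{not} preserve (A4): it shifts $\mathcal R'_0(\cdot,\zeta)$ by a multiple of $\mathcal R'_0(\cdot,h)$, not of $\mathcal R_0(\cdot,h)$. Fortunately this normalization is unnecessary --- the quantity $\mathcal R_0(\beta,\zeta)$ never enters the (A3) computation. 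Finally, the sign of $c$ is irrelevant; you never divide by it.
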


\begin{proof}
Note that we are assuming \eqref{A2} that $\mathcal R_0$ has the weak-Hodge Riemann property, and the task is to show that it actually has the Hodge-Riemann property, and by \eqref{A1} we know $\mathcal R_0(h)>0$.  So suppose $\beta\in V$ is such that
\begin{equation}\mathcal R_0 (\beta,h) = 0 = \mathcal R_0(\beta).\label{eq:assumptiononbeta}\end{equation}
Then the task is to show that $\beta=0$.\\

{\bf Step 1: } We claim that  
\begin{equation} \mathcal R'_0(\beta)=0. \label{eq:Rdotofbeta=0}\end{equation}

To prove this consider
\begin{equation}\label{eq:propBiimpliesAi+1setup3} f(t):=\mathcal R_t(\beta) \mathcal R_t(h) - \left(\mathcal R_t(\beta, h)\right)^2.
\end{equation}
By \eqref{A2}, $f(t)\le 0$ for all $|t|\ll 1$ and \eqref{eq:assumptiononbeta} implies $f(0)=0$. Thus
\begin{align*}
0 = f'(0) = \mathcal R_0(\beta) \mathcal R'_0(h) + \mathcal R_0'(\beta) \mathcal R_0(h) - 2 \mathcal R_0(\beta,h) \mathcal R_0'(\beta, h) = \mathcal R_0'(\beta) \mathcal R_0(h)
\end{align*}
which implies the claim \eqref{eq:Rdotofbeta=0} as $\mathcal R_0(h)>0$ by \eqref{A1}. \\

{\bf Step 2: } Now we use the assumption  that $\mathcal R_0'$ has the Hodge-Riemann property, which in particular implies it is non-degenerate.  Thus there exists a $\gamma\in V$ that is dual to the linear map $\mathcal R_0(\cdot, h)$, i.e. such that
\begin{equation} \mathcal R'_0(\delta,\gamma) = \mathcal R_0(\delta,h) \text{ for all } \delta\in V.\label{eq:dual}\end{equation}
Observe that since $\mathcal R_0(h)>0$ we know that $\gamma\neq 0$.\\

{\bf Step 3: }  We next claim that
\begin{equation}\mathcal R_0'(\beta,h) \mathcal R_0(\gamma,h)=0.\label{eq:claiminstep2} \end{equation}
To see this consider
$$ g(s):= \mathcal R'_0(\beta+s\gamma) \mathcal R_0(h) -2 \mathcal R'_0(\beta+s\gamma,h) \mathcal R_0(\beta+s\gamma,h).$$
By \eqref{A3} we have $g(s)\le 0$ for all $s$,  and by (\ref{eq:assumptiononbeta},\ref{eq:Rdotofbeta=0}) we know $g(0)=0$.  Thus
\begin{align*}
0 = g'(0) &= 2 \mathcal R_0'(\beta,\gamma) \mathcal R_0(h)  - 2\mathcal R_0'(\beta,h) \mathcal R_0(\gamma,h) - 2 \mathcal R_0'(\gamma,h) \mathcal R_0(\beta,h)\\
&=  2 \mathcal R_0(\beta,h) \mathcal R_0(h)  - 2\mathcal R_0'(\beta,h) \mathcal R_0(\gamma,h) \tag{by \eqref{eq:dual} and \eqref{eq:assumptiononbeta}}\\
& = - 2\mathcal R_0'(\beta,h) \mathcal R_0(\gamma,h) \tag{by \eqref{eq:assumptiononbeta} again}\\
\end{align*}
proving the claim.\\

{\bf Step 4: } We next show that  $\beta$ and $\gamma$ are proportional, i.e. there exists a $\kappa\in \mathbb R$ such that $\beta = \kappa \gamma$.

Suppose first that $\mathcal R'_0(\beta, h)=0$.  Recall we already know from Step 1 that $\mathcal R_0'(\beta)=0$ and by 
\eqref{A1} $\mathcal R'_0(h)>0$.  Thus since $ \mathcal R_0'$ is assumed to have the Hodge-Riemann property we deduce that $\beta=0$ so the Claim certainly holds with $\kappa=0$.

So we may assume $\mathcal R_0'(\beta,h)\neq 0$ and so by \eqref{eq:dual},  $\mathcal R_0'(\gamma) = \mathcal R_0(\gamma,h)=0$ by Step 3.  Thus in summary, the classes $\beta$ and $\gamma$ both lie in $\ker( \mathcal R_0(\cdot,h))$ and also in the null cone of $\mathcal R_0'$.  
Recall $\mathcal R_0'$ has signature $(1,\dim V-1)$ and  by \eqref{A3} is negative semidefinite on $\ker( \mathcal R_0(\cdot, h) )$.  But this is only possible if $\beta$ is proportional to $\gamma$ (this is a formal statement about such bilinear forms that for completeness we include in Lemma \ref{lem:intersectionnullandcodim1:repeat}).  This proves the claim that $\beta= \kappa \gamma$.\\

{\bf Step 5:} We are now ready to complete the proof by observing that
\begin{align*}
\kappa \mathcal R_0(\zeta,h)&=\kappa \mathcal R_0'(\zeta,\gamma) \tag{by definition of $\gamma$ in \eqref{eq:dual}}\\
&= \mathcal R_0'(\zeta,\beta) \tag{as $\beta = \kappa \gamma$}\\
&=c \mathcal R_0(\beta,h) \tag{by \eqref{A4}}\\
&=0 \tag{by our hypothesis on $\beta$ in \eqref{eq:assumptiononbeta}}
\end{align*}
But \eqref{A5} tells us that $\mathcal R_0(\zeta,h)>0$, so we must have $\kappa=0$ and hence $\beta=0$ completing the proof.
\end{proof}

\begin{lemma}\label{lem:intersectionnullandcodim1:repeat}
Let $\mathcal Q$ be a bilinear form on a finite dimensional vector space $V$ with the Hodge-Riemann property.  Let $V'\subset V$ be a subspace on which $\mathcal Q$ is negative semidefinite.    Then if $\beta,\gamma\in V'$ satisfy $\mathcal Q(\beta) = \mathcal Q(\gamma)=0$ and $\gamma\neq 0$ then $\beta= \kappa \gamma$ for some $\kappa\in \mathbb R$.
\end{lemma}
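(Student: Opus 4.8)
The statement to prove is Lemma~\ref{lem:intersectionnullandcodim1:repeat}: if $\mathcal{Q}$ has the Hodge-Riemann property on $V$, $V'\subset V$ is a subspace on which $\mathcal{Q}$ is negative semidefinite, and $\beta,\gamma\in V'$ are isotropic with $\gamma\neq 0$, then $\beta$ is proportional to $\gamma$.

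\textbf{Plan of proof.}  The plan is to work entirely with the signature data.  Since $\mathcal{Q}$ has the Hodge-Riemann property, it has signature $(1,\rho-1)$ where $\rho=\dim V$; in particular it is non-degenerate.  First I would observe that on the negative semidefinite subspace $V'$, any isotropic vector lies in the radical of $\mathcal{Q}|_{V'}$: indeed if $v\in V'$ with $\mathcal{Q}(v)=0$ and $w\in V'$ is arbitrary, then for all real $s$ we have $\mathcal{Q}(v+sw)\le 0$, i.e. $2s\mathcal{Q}(v,w)+s^2\mathcal{Q}(w)\le 0$; letting $s\to 0^{\pm}$ forces $\mathcal{Q}(v,w)=0$.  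Applying this to $v=\gamma$ and $v=\beta$ separately, we learn that $\mathcal{Q}(\gamma,w)=\mathcal{Q}(\beta,w)=0$ for every $w\in V'$; in particular $\mathcal{Q}(\beta,\gamma)=0$ as well.

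\textbf{The key step.}  The main point is then to show that $\gamma^{\perp}$ (the $\mathcal{Q}$-orthogonal complement of $\gamma$ in $V$) does not contain two linearly independent isotropic vectors of $\mathcal{Q}$ both lying in a negative semidefinite subspace.  Suppose for contradiction that $\beta$ and $\gamma$ are linearly independent, and set $P:=\mathrm{span}(\beta,\gamma)\subset V'$, a $2$-dimensional subspace.  By the previous paragraph, $\mathcal{Q}$ restricted to $P$ is identically zero (it vanishes on $\beta$, on $\gamma$, and on the cross term), so $P$ is a totally isotropic subspace of $V$.  But a non-degenerate symmetric bilinear form of signature $(1,\rho-1)$ cannot contain a totally isotropic subspace of dimension $\ge 2$: the maximal dimension of a totally isotropic subspace equals $\min(1,\rho-1)=1$ (using $\rho\ge 2$, which holds since $V'$ already contains the nonzero $\gamma$ and, if $\rho=1$, $\mathcal{Q}$ would be positive definite by the Hodge-Riemann property, contradicting the existence of an isotropic $\gamma\neq 0$).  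This contradiction shows $\beta$ and $\gamma$ are proportional, so $\beta=\kappa\gamma$ for some $\kappa\in\mathbb{R}$.

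\textbf{Expected obstacle.}  This is essentially a bookkeeping argument and I do not anticipate a genuine obstacle; the only point requiring a little care is the Witt-index fact that a form of signature $(1,\rho-1)$ has no totally isotropic plane.  If one prefers to avoid quoting it, one can argue directly: choose coordinates diagonalizing $\mathcal{Q}$ with one $+1$ and $\rho-1$ entries $-1$; writing $\beta=(\beta_0,\beta_{-})$ and $\gamma=(\gamma_0,\gamma_{-})$ with $\beta_0,\gamma_0\in\mathbb{R}$ and $\beta_-,\gamma_-\in\mathbb{R}^{\rho-1}$, isotropy gives $\beta_0^2=\|\beta_-\|^2$, $\gamma_0^2=\|\gamma_-\|^2$, and $\beta_0\gamma_0=\langle\beta_-,\gamma_-\rangle$; by Cauchy-Schwarz the last is an equality case, forcing $\beta_-$ and $\gamma_-$ to be proportional, and then matching the first coordinates (after possibly replacing $\gamma$ by $-\gamma$) gives $\beta=\kappa\gamma$.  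Either route is short; I would present the totally-isotropic-subspace version as the cleaner one.
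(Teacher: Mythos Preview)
Your proof is correct. Both you and the paper begin identically, using negative semidefiniteness of $\mathcal{Q}$ on $V'$ to deduce $\mathcal{Q}(\beta,\gamma)=0$ (you in fact prove the slightly stronger statement that every isotropic vector in $V'$ lies in the radical of $\mathcal{Q}|_{V'}$, while the paper only checks the cross term $\mathcal{Q}(\beta,\gamma)$). The finishes differ: the paper picks $h\in V$ with $\mathcal{Q}(h)>0$, notes that $\mathcal{Q}(\gamma,h)\neq 0$ (else $\gamma=0$ by the Hodge-Riemann criterion), solves for $t_0$ with $\mathcal{Q}(\beta+t_0\gamma,h)=0$, and then concludes $\beta+t_0\gamma=0$ directly from Definition-Lemma~\ref{deflemma:HRofquadraticform}(3). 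You instead observe that if $\beta,\gamma$ were independent then $\operatorname{span}(\beta,\gamma)$ would be a totally isotropic plane in a form of signature $(1,\rho-1)$, contradicting the Witt index bound $\min(1,\rho-1)=1$. Both routes are short and valid; the paper's has the small advantage of remaining entirely within the Hodge-Riemann characterizations already set up, whereas your argument is a clean signature-theoretic one (and your Cauchy--Schwarz alternative makes it self-contained without quoting the Witt index).
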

\begin{proof}
Let $h\in V$ be such that $\mathcal Q(h)>0$.  For $t\in \mathbb R$  we have $\beta+t\gamma\in V'$ and hence
$$0 \ge \mathcal Q(\beta + t\gamma) = 2t\mathcal Q(\beta,\gamma).$$
Since this holds for all $t$ we conclude $\mathcal Q(\beta,\gamma)=0$.    Thus we actually have
$$ 0 = \mathcal Q (\beta + t\gamma) \text{ for all } t\in \mathbb R.$$
If $\mathcal Q(\gamma,h)=0$ then as $\mathcal Q(\gamma)=0$ and $\mathcal Q$ has the Hodge-Riemann property we would have $\gamma=0$ which is absurd.  So $\mathcal Q(\gamma,h)\neq 0$.  Thus we may find $t_0$ so $\mathcal Q(\beta + t_0\gamma,h)=0$.  Since also $\mathcal Q(\beta + t_0\gamma) =0$ we deduce from the Hodge-Riemann property of $\mathcal Q$ that $\beta+t_0\gamma=0$ and we are done. 
\end{proof}

\subsection{Recursive version}

\begin{theorem}\label{thm:recursion}
Fix an integer $j\ge 2$ and let $(\mathcal R_{i,t})_{1\le i\le j}$ be a sequence of families of bilinear forms on $V$ with the following properties:
\begin{enumerate}
\item For $i\ge 2$ the families $\mathcal R_{i,t}$ have property $(A)$.
\item For each $i\ge 2$ there exists a positive constant $C_i$ such that \begin{equation}
\mathcal R'_{i,0}=C_i\cR_{i-1,0}.\label{eq:recursion}
\end{equation}
\item $\cR_{1,0}|_W=0.$
\item $\cR_{2,0}|_W$ has the Hodge-Riemann property with respect to $h$.
\item The constant $c_{\cR_2}$ in  condition (\ref{A4}) for $\cR_{2,t}$ is non-zero.
\end{enumerate}
Then the forms $\cR_{i,0}$ have the Hodge-Riemann property with respect to $h$ for all $i\in\{2,\ldots,j\}$.
\end{theorem}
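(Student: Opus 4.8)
The plan is to prove the statement by induction on $i$, from $i=2$ up to $j$. For the inductive step, suppose $i\ge 3$ and that $\mathcal{R}_{i-1,0}$ already has the Hodge-Riemann property with respect to $h$. By~\eqref{eq:recursion} we have $\mathcal{R}'_{i,0}=C_i\,\mathcal{R}_{i-1,0}$ with $C_i>0$, and scaling a symmetric bilinear form by a positive constant changes neither its signature nor the set on which it is positive; hence $\mathcal{R}'_{i,0}$ has the Hodge-Riemann property with respect to $h$. Since $\mathcal{R}_{i,t}$ has property~$(A)$ by hypothesis~(1), Theorem~\ref{thm:augmentation1} applies and yields that $\mathcal{R}_{i,0}$ has the Hodge-Riemann property with respect to $h$. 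So the whole theorem reduces to the base case $i=2$.

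The base case is where the real work lies, because Theorem~\ref{thm:augmentation1} is unavailable: by~\eqref{eq:recursion} and hypothesis~(3) we get $\mathcal{R}'_{2,0}|_W=C_2\,\mathcal{R}_{1,0}|_W=0$, so $\mathcal{R}'_{2,0}$ is degenerate (as soon as $\dim W\ge 2$) and does not have the Hodge-Riemann property. Instead I will replay the skeleton of the proof of Theorem~\ref{thm:augmentation1}, substituting for the non-degeneracy of $\mathcal{R}'_0$ used there the two genuinely new inputs available here: the Hodge-Riemann property of $\mathcal{R}_{2,0}|_W$ (hypothesis~(4)) and $c_{\mathcal{R}_2}\ne 0$ (hypothesis~(5)). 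Since $\mathcal{R}_{2,0}(h)>0$ it is enough, given $\beta=\alpha+\lambda\zeta\in V$ with $\mathcal{R}_{2,0}(\beta,h)=0=\mathcal{R}_{2,0}(\beta)$, to deduce $\beta=0$; note that for $i=2$ only $\mathcal{R}_{2,0}(h)>0$, \eqref{A2}, \eqref{A4} and \eqref{A5} will actually be used.

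To carry this out, first observe that exactly as in Step~1 of the proof of Theorem~\ref{thm:augmentation1} the function $f(t)=\mathcal{R}_{2,t}(\beta)\mathcal{R}_{2,t}(h)-\mathcal{R}_{2,t}(\beta,h)^2$ is $\le 0$ for $|t|\ll 1$ by~\eqref{A2} and vanishes at $t=0$, so $f'(0)=0$, which together with $\mathcal{R}_{2,0}(h)>0$ forces $\mathcal{R}'_{2,0}(\beta)=0$. Expanding $\beta=\alpha+\lambda\zeta$ and using $\mathcal{R}'_{2,0}|_W=0$ and~\eqref{A4} gives
\[0=\mathcal{R}'_{2,0}(\beta)=c_{\mathcal{R}_2}\bigl(2\lambda\,\mathcal{R}_{2,0}(\alpha,h)+\lambda^2\,\mathcal{R}_{2,0}(\zeta,h)\bigr).\]
Since $0=\mathcal{R}_{2,0}(\beta,h)=\mathcal{R}_{2,0}(\alpha,h)+\lambda\,\mathcal{R}_{2,0}(\zeta,h)$, substituting $\mathcal{R}_{2,0}(\alpha,h)=-\lambda\,\mathcal{R}_{2,0}(\zeta,h)$ and using $c_{\mathcal{R}_2}\ne 0$ reduces this to $-\lambda^2\,\mathcal{R}_{2,0}(\zeta,h)=0$, so $\lambda=0$ because $\mathcal{R}_{2,0}(\zeta,h)>0$ by~\eqref{A5}. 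Hence $\beta=\alpha\in W$ with $\mathcal{R}_{2,0}(\alpha,h)=0=\mathcal{R}_{2,0}(\alpha)$, and the Hodge-Riemann property of $\mathcal{R}_{2,0}|_W$ from hypothesis~(4) forces $\alpha=0$, completing the base case and with it the induction.

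I expect the base case to be the one real obstacle: the crux is recognizing that the $\zeta$-component of $\beta$ can be eliminated using nothing more than~\eqref{A4}, \eqref{A5} and the vanishing $\mathcal{R}'_{2,0}|_W=0$ coming from~(3), after which the problem collapses onto $W$ and is closed by the assumed Hodge-Riemann property of $\mathcal{R}_{2,0}|_W$; the inductive step is then a routine application of Theorem~\ref{thm:augmentation1}.
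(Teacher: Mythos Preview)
Your proof is correct and follows essentially the same approach as the paper's: induction on $i$ with Theorem~\ref{thm:augmentation1} handling the inductive step, and for the base case $i=2$ the same ingredients (Step~1 of Theorem~\ref{thm:augmentation1}, \eqref{A4}, \eqref{A5}, the vanishing $\mathcal R_{1,0}|_W=0$, and the Hodge--Riemann property of $\mathcal R_{2,0}|_W$). Your base-case computation is arguably a bit more direct---you keep the coefficient $\lambda$ general and show $\lambda=0$, whereas the paper normalizes to $\lambda=1$ and derives a contradiction from a $2\times 2$ linear system in $\mathcal R_{1,0}(\alpha,\zeta)$ and $\mathcal R_{1,0}(\zeta,\zeta)$---but the argument is substantively identical.
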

\begin{proof}
We prove the statement by induction on $i$ starting with $i=2$. The inductive step follows from Theorem \ref{thm:augmentation1} and  \eqref{eq:recursion}. So we only have to show that  $\cR_{2,0}$ has the Hodge-Riemann property under our hypotheses.

Assume for contradiction that $\cR_{2,0}$ does not have the Hodge-Riemann property on $V$. Since it does have this property on $W$, there must exist an element $\beta\in V$ of the form $\beta=\alpha+\zeta$ with $\alpha\in W$ such that
\begin{equation}\cR_{2,0} (\beta,h) = 0 = \cR_{2,0}(\beta).\label{eq:assumptionon2beta}\end{equation}
Then we get 
by condition \eqref{A4} for $\cR_{2,t}$ on one hand 
\begin{equation}
\label{eq:A4forR_2}
 \cR_{2,0}'(\beta,\zeta)=c_{\cR_2}\cR_{2,0}(\beta,h)=0
\end{equation}
and 
 by Step 1 of the proof of Theorem \ref{thm:augmentation1} on the other hand 
 \begin{equation}\label{eq:Step1forR_2}
\cR_{2,0}'(\beta)=0.
\end{equation}
Now by \eqref{eq:recursion} we may rewrite \eqref{eq:A4forR_2} and \eqref{eq:Step1forR_2} as a system
\begin{equation}
\left\{\begin{array}{ll}\cR_{1,0}(\beta,\zeta)=0,\\
\cR_{1,0}(\beta)=0\end{array}\right.\end{equation}
which using the vanishing of $\cR_{1,0}$ on $W$ translates into 
\begin{equation}\label{eq:R1}
\left\{\begin{array}{ll}
\cR_{1,0}(\alpha,\zeta)+\cR_{1,0}(\zeta,\zeta)=0\\
2\cR_{1,0}(\alpha,\zeta)+\cR_{1,0}(\zeta,\zeta)=0.
\end{array}\right.
\end{equation}
This implies $\cR_{1,0}(\zeta,\zeta)=0$ which again by \eqref{eq:recursion}  and by condition \eqref{A4} for $\cR_{2,t}$  entails
$$c_{\cR_2}\cR_{2,0}(\zeta,h)=0.$$ 
But since $c_{\cR_2}$ is supposed to be non-zero  this contradicts condition \eqref{A5} for $\cR_{2,t}$.   
\end{proof}

\subsection{Augmentation 2}

It turns out that for the applications we have in mind the above recursive form of the augmentation is not enough, and we will need another form of augmentation  that makes hypotheses on the second derivative but with a weaker conclusion.  We continue with the notation above, so $\mathcal R_t$ is a family of bilinear forms on the vector space $V = W\oplus \mathbb R\zeta$  but we ask it to be twice differentiable in $t$ this time, and $h\in W$ is fixed.

\begin{definition}
 We say that $\mathcal R_t$ is a \emph{has property (B)} if the following hold
\begin{enumerate}[label=(B{\arabic*}), ref=B{\arabic*}]
\item  \label{B1} $\mathcal R_0(h)>0$.
\item  \label{B2} For $|t|\ll 1$  $\mathcal R_t$ has the weak-Hodge Riemann property with respect to $h$,  i.e.
$$\mathcal R_t(\beta) \mathcal R_t(h) \le \mathcal R_t(\beta,h)^2 \text{ for all } \beta\in V. $$
\item \label{B3}  For $|t|\ll 1$ the following inequality holds 
$$ \mathcal R'_t(\beta) \mathcal R_t(h) \le 2 \mathcal R'_t(\beta,h) \mathcal R_t(\beta,h) \text{ for all } \beta\in V.$$ 
\item \label{B4}{}  $\mathcal R''_0(\alpha,\zeta) = 2\mathcal R_0'(\alpha, h)$ for all $\alpha \in W$.
\item \label{B5}{}   $\mathcal R''_0(\zeta,\zeta) = 2\mathcal R_0(h)$.
\end{enumerate}
\end{definition}

\begin{theorem}\label{thm:augmentation2}
Let $\mathcal R_t$ be family bilinear forms on $V$ that has property (B).    If $\mathcal R''_0$ has the Hodge-Riemann property with respect to $h$ then the restriction $\mathcal R_0|_W$ has the Hodge-Riemann property with respect to $h$.
\end{theorem}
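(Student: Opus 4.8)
The plan is to run the argument of Theorem~\ref{thm:augmentation1} one order higher in $t$. First the easy reduction: by \eqref{B1}, \eqref{B2} at $t=0$, and Lemma~\ref{lem:wHR}, the restriction $\mathcal R_0|_W$ has the weak Hodge-Riemann property with respect to $h$; in particular $\mathcal R_0(\alpha)\mathcal R_0(h)\le \mathcal R_0(\alpha,h)^2$ for all $\alpha\in W$. Since $\mathcal R_0(h)>0$ the functional $\alpha\mapsto\mathcal R_0(\alpha,h)$ on $W$ is non-zero, so $W_h:=\{\alpha\in W:\mathcal R_0(\alpha,h)=0\}$ has dimension $\dim W-1$ and $\mathcal R_0\le 0$ there. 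By Definition-Lemma~\ref{deflemma:HRofquadraticform}(2) it thus suffices to show that $\mathcal R_0$ is negative definite on $W_h$, i.e.\ that $\mathcal R_0(\alpha,h)=0=\mathcal R_0(\alpha)$ with $\alpha\in W$ forces $\alpha=0$.

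Fix such an $\alpha$. Exactly as in Step~1 of Theorem~\ref{thm:augmentation1}: the function $t\mapsto\mathcal R_t(\alpha)\mathcal R_t(h)-\mathcal R_t(\alpha,h)^2$ is $\le0$ near $0$ by \eqref{B2} and vanishes at $0$, hence has vanishing derivative there, which (using $\mathcal R_0(\alpha)=\mathcal R_0(\alpha,h)=0$ and $\mathcal R_0(h)>0$) gives $\mathcal R'_0(\alpha)=0$. Next, $g(t):=\mathcal R'_t(\alpha)\mathcal R_t(h)-2\mathcal R'_t(\alpha,h)\mathcal R_t(\alpha,h)$ is $\le0$ near $0$ by \eqref{B3} and, since $\mathcal R'_0(\alpha)=0$ and $\mathcal R_0(\alpha,h)=0$, vanishes at $0$; hence $g'(0)=0$, which after cancellation reads
\[
\mathcal R''_0(\alpha)\,\mathcal R_0(h)=2\,\mathcal R'_0(\alpha,h)^2.
\]

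Now the crux. By \eqref{B5}, $\mathcal R''_0(\zeta)=\mathcal R''_0(\zeta,\zeta)=2\mathcal R_0(h)>0$, so we may apply the Hodge-index inequality for $\mathcal R''_0$ with the vector $\zeta$. Using the displayed identity together with \eqref{B4} (which gives $\mathcal R''_0(\alpha,\zeta)=2\mathcal R'_0(\alpha,h)$) and \eqref{B5},
\[
\mathcal R''_0(\alpha)\,\mathcal R''_0(\zeta)=2\,\mathcal R''_0(\alpha)\,\mathcal R_0(h)=4\,\mathcal R'_0(\alpha,h)^2=\mathcal R''_0(\alpha,\zeta)^2,
\]
so $\alpha$ is an equality vector for the Hodge-index inequality of $\mathcal R''_0$ relative to $\zeta$. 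As $\mathcal R''_0$ has the Hodge-Riemann property, Definition-Lemma~\ref{deflemma:HRofquadraticform}(4) forces $\alpha$ to be proportional to $\zeta$; since $\alpha\in W$ and $\zeta\notin W$ we conclude $\alpha=0$, completing the proof.

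I expect no serious obstacle: the two derivative computations are routine and mirror Theorem~\ref{thm:augmentation1}, and the only genuine insight is the observation that the second-order identity, combined with the normalizations \eqref{B4} and \eqref{B5}, is precisely what makes $\alpha$ saturate the Hodge-index inequality of $\mathcal R''_0$ against $\zeta$ — after which the Hodge-Riemann property of $\mathcal R''_0$ does the rest. One should also keep in mind the trivial edge cases used above (that $\mathcal R_0(\cdot,h)$ is not identically zero on $W$, and that $W\cap\mathbb R\zeta=0$).
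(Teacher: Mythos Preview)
Your proof is correct and follows essentially the same approach as the paper's: reduce to showing that $\alpha\in W$ with $\mathcal R_0(\alpha,h)=0=\mathcal R_0(\alpha)$ must vanish, derive $\mathcal R'_0(\alpha)=0$ from \eqref{B2}, then differentiate the inequality \eqref{B3} at its interior maximum to obtain $\mathcal R''_0(\alpha)\mathcal R_0(h)=2\mathcal R'_0(\alpha,h)^2$, and finally use \eqref{B4}, \eqref{B5} to recognize this as equality in the Hodge-index inequality for $\mathcal R''_0$ relative to $\zeta$. The paper's proof is identical in structure and substance; your presentation of the initial reduction via Definition-Lemma~\ref{deflemma:HRofquadraticform}(2) is slightly more explicit but equivalent.
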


\begin{proof}
We are assuming \eqref{B2} that $\mathcal R_0$ has the weak-Hodge Riemann property, and since $h\in W$ \eqref{B1} implies that $\mathcal R_0|_W$ also has the weak Hodge-Riemann property.  Thus the task is to show that it actually has the Hodge-Riemann property, and by \eqref{B1} we know $\mathcal R_0(h)>0$.  So suppose $\alpha\in W$ is such that
\begin{equation}\mathcal R_0 (\alpha,h) = 0 = \mathcal R_0(\alpha).\label{eq:assumptiononbetaII}\end{equation}
Then task is to show that $\alpha=0$.\\

{\bf Step 1: } We claim that 
\begin{equation} \mathcal R'_0(\alpha)=0. \label{eq:Rdotofbeta=0II}\end{equation}
which is proved exactly as in Step 1 of the proof of Theorem \ref{thm:augmentation1} and is omitted.\\

{\bf Step 2: } Consider now

$$g(t):= \mathcal R_t'(\alpha)\mathcal R_t(h) - 2 \mathcal R_t'(\alpha,h) \mathcal R_t(\alpha,h).$$
Then by (\ref{eq:assumptiononbetaII},\ref{eq:Rdotofbeta=0II}) we have $g(0)=0$ and \eqref{B3} implies $g(t)\le 0$ for $|t|\ll 1$. Thus
\begin{align*} 0 
&= g'(0) = \mathcal R_t''(\alpha) \mathcal R_t(h) + \mathcal R_t'(\alpha) \mathcal R_t'(h) - 2\mathcal R_t''(\alpha,h) \mathcal R_t(\alpha,h) - 2 \mathcal R'_t(\alpha,h)^2|_{t=0}\\
&= \mathcal R_{0}''(\alpha) \mathcal R_0(h) - 2 \mathcal R_0'(\alpha,h)^2\\
&=\frac{1}{2}\left( \mathcal R_0''(\alpha) \mathcal R_0''(\zeta) - \mathcal R_0''(\alpha,\zeta)^2\right)\tag{by (\ref{B4},\ref{B5})}
\end{align*}
Thus 
$$\mathcal R_0''(\alpha) \mathcal R_0''(\zeta) =\mathcal R_0''(\alpha,\zeta)^2.$$
Now we are assuming that $\mathcal R_0''$ has the Hodge-Riemann property with respect to $h$, and since $\mathcal R_0''(\zeta) = 2\mathcal R_0(h)>0$ the Hodge-index inequality for $\mathcal R_0''$ also holds with respect to $\zeta$ (see Definition-Lemma \ref{deflemma:HRofquadraticform}(4)).  So we deduce that $\alpha$ is proportional to $\zeta$,  but since $\alpha\in W$ this is only possible if $\alpha=0$ and we are done. 
\end{proof}

\begin{remark}
 The following example shows that a naive approach to get an Augmentation 1 type result like Theorem \ref{thm:augmentation1} is bound to fail (that is, some additional hypothesis like those appearing in condition (A) are really needed).   Let $\cR_{t}$ be a family of quadratic forms on a vector space $V$ of dimension $n\ge1$ as above and suppose that there exists a non-zero element $h\in V$ such that:
\begin{enumerate}
\item $\cR_{t}$ has the weak Hodge-Riemann property with respect to $h$ for $|t|\ll 1$,
\item $\cR_{t}$ has the  Hodge-Riemann property for $|t|\ll 1$ and $t\ne0$,
\item $\cR'_{t}$ has the  Hodge-Riemann property with respect to $h$ for $|t|\ll 1$.
\end{enumerate}
Then either $\mathcal R_0$ has the Hodge-Riemann property or the kernel $N$ of $\cR_{0}$ is one-dimensional.  Moreover the latter case can happen as soon as $n>1$.
\end{remark}

\begin{proof} By assumption $\dim N\ge 1$. Let $\beta$ be any vector in $N$. We consider
\begin{equation} f(t):=\mathcal R_t(\beta) \mathcal R_t(h) - \left(\mathcal R_t(\beta, h)\right)^2.
\end{equation}
 Then $f(t)\le 0$ for all $|t|\ll 1$ and $f(0)=0$. Thus
\begin{align*}
0 = f'(0) = \mathcal R_0(\beta) \mathcal R'_0(h) + \mathcal R_0'(\beta) \mathcal R_0(h) - 2 \mathcal R_0(\beta,h) \mathcal R_0'(\beta, h) = \mathcal R_0'(\beta) \mathcal R_0(h)
\end{align*} 
and so $\beta$ must lie in the null cone $C_{\cR'_{0}}^{0}:=\{\alpha\in V \ | \ \mathcal R_0'(\alpha)=0\}$ of $\mathcal R_0'$. But by Lemma \ref{lem:intersectionnullandcodim1:repeat} $C_{\cR'_{0}}^{0}$ cannot contain two-dimensional vector subspaces since $\cR'_{0}$ has the Hodge-Riemann property and the first assertion follows. 

For the second assertion it is enough to consider the following  family of quadratic forms on $\R^{n}$:
$$\cR_{t}(x)=(1+t)x_{1}^{2}+2x_{1}x_{2}+(1-t)x_{2}^{2}+\sum_{i=3}^{n} (-1-t)x_{i}^{2}.$$
\end{proof}

\section{Schur polynomials of K\"ahler forms on tori with maximal Picard number and the linear case of Theorem \ref{thm:sketch}}\label{sec:cone_classes}

The main result of this section is the following Theorem \ref{thm:main} which is  Theorem \ref{thm:sketch}(2) in the special case when $X$ is a torus with maximal Picard number, i.e. $X$ is a torus $T$ such that $N^1_{\mathbb R}(T)=H^{1,1}_\R(T)$. From it we will easily be able to deduce in Subsection \ref{subsec:proofSketch}  the linear case of our main result , i.e. Theorem \ref{thm:sketch}(1). Note that tori with maximal Picard number exist in every dimension, e.g $(\C/\Z[i])^d$, see \cite{Beauville2014} for results on projective manifolds with maximal Picard number.

\begin{theorem}\label{thm:main}
Let $d\ge 2$, $e\ge 1$ be two integers and $\lambda=(\lambda_{1},\ldots,\lambda_{N})$ be a partition of $d-2$ with $0\le\lambda_{N}\le\ldots\le\lambda_{1}\le e$. Let further $T$ be a $d$-dimensional  complex torus with maximal Picard number and let  $\omega_{1},\ldots,\omega_{e}$ be K\"ahler forms on $T$. Then $s_{\lambda}(\{\omega_{1}\},\ldots,\{\omega_{e}\})$ has the Hodge-Riemann property. 
\end{theorem}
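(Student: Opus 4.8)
The plan is to deduce the theorem from our earlier work in two moves: first establish the \emph{weak} Hodge-Riemann property for the relevant form, which is essentially immediate, and then upgrade it to the full Hodge-Riemann property by feeding a carefully chosen perturbation family into the augmentation machinery of Section~\ref{section:augmentation}. For the first move, note that since $T$ has maximal Picard number each $\omega_j$ lies in $N^1_{\mathbb R}(T)$ and is a limit of rational ample classes, so the $\mathbb R$-twisted nef bundle $\bigoplus_{j=1}^e \cO_T\langle\{\omega_j\}\rangle$ of Section~\ref{sec:Rtwistedbundles} is a limit of $\mathbb Q$-twisted nef bundles on the projective manifold $T$, and $s_\lambda$ of it is $s_\lambda(\{\omega_1\},\ldots,\{\omega_e\})$. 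By Theorem~\ref{theorem:paper2} the corresponding forms have the weak Hodge-Riemann property, and since this property passes to limits (Lemma~\ref{lem:wHR}), so does $Q:=Q_{s_\lambda(\{\omega_1\},\ldots,\{\omega_e\})}$; moreover $Q(h)>0$ for every Kähler class $h$, by Fulton--Lazarsfeld \cite{FultonLazarsfeld}. (Equivalently one may argue via \cite{RossToma} applied to $\bigoplus_j L_j^{(n)}$ for rational approximations $\{\omega_j^{(n)}\}=c_1(L_j^{(n)})$ and then let $n\to\infty$; it is precisely here that the maximal Picard number hypothesis is used.)

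For the upgrade, fix a Kähler class $h$ on $T$. Applying the $\mathbb R$-twist to the Chern roots $\{\omega_1\}+th,\ldots,\{\omega_e\}+th$ expresses $s_\lambda(\{\omega_1\}+th,\ldots,\{\omega_e\}+th)$ as a polynomial in $t$ whose coefficients are, up to positive constants, of the form $h^{p}\wedge\sigma_{d-2-p}$, where $\sigma_0=1$, $\sigma_{d-2}=s_\lambda(\{\omega_1\},\ldots,\{\omega_e\})$, and each $\sigma_m$ is a non-negative combination of Schur classes of the $\{\omega_j\}$ of total degree $m$; in particular $\sigma_1$ is a positive multiple of a Kähler class. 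Differentiating in $t$ lowers the degree of the $\sigma$-factor by one at the cost of one extra factor of $h$. The plan is to organize successive upper truncations of this polynomial --- after adjoining one extra dimension $\zeta$ to $H^{1,1}_{\mathbb R}(T)$ as in Section~\ref{section:augmentation}, equivalently passing to $T\times\mathbb P^1$ as in the proof of Theorem~\ref{theorem:oldinequality} --- into a tower of families $\mathcal R_{i,t}$ ($1\le i\le d-1$) on $V=W\oplus\mathbb R\zeta$, $W=H^{1,1}_{\mathbb R}(T)$, with $\mathcal R'_{i,0}$ proportional to $\mathcal R_{i-1,0}$ so that \eqref{eq:recursion} holds with positive constants read off from the twist expansion; whose top member $\mathcal R_{d-1,0}$ restricts on $W$ to $Q$; whose bottom member $\mathcal R_{1,0}$ vanishes on $W$; and whose second member $\mathcal R_{2,0}|_W$ is proportional to the intersection form of a product of Kähler classes, hence has the Hodge-Riemann property by Gromov's mixed relations \cite{Gromov90}.

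It then remains to verify that each $\mathcal R_{i,t}$ with $i\ge 2$ has property (A). Condition \eqref{A1} is Fulton--Lazarsfeld positivity; \eqref{A2}, the weak Hodge-Riemann property along the family, comes from Theorem~\ref{theorem:paper2} applied to the $\mathbb Q$-twisted nef bundles occurring in the truncated expansions (again using density of rational classes); \eqref{A3} follows --- via the geometric realizations of Schur classes underlying Theorem~\ref{theorem:paper2} --- from the Chern class inequality of Theorem~\ref{theorem:oldinequality}; and \eqref{A4}, \eqref{A5} are the structural identities relating the $\zeta$-slot of $\mathcal R'_{i,0}$ to the $h$-slot of $\mathcal R_{i,0}$, with $c_{\mathcal R_i}\neq 0$, which are to be forced by the correct choice of $\zeta$. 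Granting property (A), Theorem~\ref{thm:recursion} yields the Hodge-Riemann property for $\mathcal R_{i,0}$ for all $2\le i\le d-1$, in particular for $Q$. The second augmentation, Theorem~\ref{thm:augmentation2}, using the second-derivative identities \eqref{B4}, \eqref{B5}, provides an alternative route through the first step of the recursion if the first-derivative identities \eqref{A4}--\eqref{A5} are inconvenient at the bottom of the tower.

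The reduction to weak Hodge-Riemann and the verification of \eqref{A2}--\eqref{A3} are bookkeeping on top of our earlier theorems; the substance, and the main obstacle, lies in setting up the tower and verifying property (A). Concretely, the hard part is to choose the augmenting vector $\zeta$ and to arrange the truncated twist expansion so that the structural identities \eqref{A4}--\eqref{A5} (equivalently \eqref{B4}--\eqref{B5}) hold exactly, and to keep track of the positivity of the constants propagating through the recursion. One must also bear in mind that the Hodge-Riemann property is open but not closed, so the weak property established in the first step genuinely has to be upgraded through augmentation --- no naive limiting argument can suffice.
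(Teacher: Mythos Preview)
Your overall architecture is right, but you have misidentified where Augmentation~2 enters, and this is not a cosmetic indexing issue---it is the crux of the argument. In the paper's setup the tower runs $1\le i\le d$ (not $d-1$), with $\mathcal R_{d,0}|_W=Q$. The recursion via property~(A) (Theorem~\ref{thm:recursion}) only carries you up to $i=d-1$, because condition~\eqref{A5} \emph{fails at the top level}: there $\mathcal R_{d,0}(\zeta,h)=\mathcal Q_d(\zeta,h)=\int_X h\, s_\lambda^{(-1)}(\underline\omega)=0$ (see Remark after Lemma~\ref{lem:A5forQ}). So the first augmentation cannot be invoked at level $d$, and your sentence ``Theorem~\ref{thm:recursion} yields the Hodge-Riemann property for $\mathcal R_{i,0}$ for all $2\le i\le d-1$, in particular for $Q$'' does not follow: $Q$ lives at level $d$, outside the range the recursion reaches.

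Augmentation~2 (Theorem~\ref{thm:augmentation2}) is therefore not ``an alternative route through the first step of the recursion'' at the \emph{bottom} of the tower; it is the \emph{mandatory final step at the top}. One verifies that $\mathcal R_{d,t}$ has property~(B), uses the recursion to know that $\mathcal R''_{d,0}=2\mathcal R_{d-2,0}$ has the Hodge-Riemann property on all of $V$, and then Theorem~\ref{thm:augmentation2} delivers the Hodge-Riemann property for $\mathcal R_{d,0}|_W=Q$. Note also that Augmentation~2 only gives the conclusion on $W$, not on $V$; this is exactly what is needed, but it means the two augmentation theorems are not interchangeable. Two minor remarks: the base case $\mathcal R_{2,0}|_W$ is just a positive multiple of intersection with $h^{d-2}$, so classical Hodge--Riemann suffices and Gromov is unnecessary; and the auxiliary factor should be $\mathbb P^d$ rather than $\mathbb P^1$, since the definition of $\mathcal Q_i$ requires $\zeta^i$ for $i$ up to $d$.
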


Since the statement is certainly satisfied when $2\le d\le3$ or when $e=1$, we will suppose for the rest of this section that $d\ge4$ and $e\ge2$.\\


We will work in the following set-up: 
\begin{itemize}
\item $X$ is a $d$-dimensional  compact complex manifold (which we will later take to be the torus $T$).
\item $e$ is a positive integer and $\lambda=(\lambda_{1},\ldots,\lambda_{N})$ a partition of $d-2$ with $0\le\lambda_{N}\le\ldots\le\lambda_{1}\le e$.
\item $W:=H^{1,1}_\R(X)$.
\item $h$ is a fixed ample class on $X$.
\item $\hat{X} : = X\times \mathbb P^d$ and  $\zeta$ will denote the hyperplane class on $\mathbb P^d$.  
\item  $V:= H^{1,1}_\R(\hat{X}) = W \oplus \mathbb R{\zeta}$.
\item For a vector $\underline{a}:=(a_1,\ldots,a_e)\in V^{\oplus e}$
we denote by   $s_{\lambda}(\underline{a})$ its Schur polynomial $s_{\lambda}(a_{1},\ldots,a_{e})$ as defined in the Introduction and by $s_{\lambda}^{(j)}(\underline{a})$ its derived Schur polynomials that are defined by requiring 
$$s_{\lambda}(a_1+x,\ldots,a_e+x) = \sum_{j=0}^{|\lambda|} s_{\lambda}^{(j)}(\underline{a}) x^j\text{ for all }x\in V$$ 
 (this notation and terminology originates in \cite{RossToma}). 
Thus $$s_{\lambda}^{(j)}(\underline{a})\in H^{|\lambda|-j,|\lambda|-j}_\R(\hat{X}) \text{ for }0\le j \le|\lambda|,$$ and for $j$ outside this range we set $s_{\lambda}^{(j)}(\underline{a})$ to be zero. 
 \item $\underline{\omega}:=(\omega_1,\ldots,\omega_e)\in W^{\oplus e}$ is a vector of K\"ahler classes on $X$.
\end{itemize}

\begin{definition}\label{def:definitionQ_i} 
  Define $\hat{\omega}_j: = \omega_j + \zeta$ for $1\le j\le e$ and $\hat{\underline{\omega}}: = (\hat{\omega}_1,\ldots,\hat{\omega}_e)$.
Define a symmetric bilinear form $\mathcal Q_i(\cdot,\cdot;\underline{\omega})$ form on $V$  by
\begin{align*}
\mathcal Q_i(\beta,\beta'):=\mathcal Q_i(\beta,\beta';\underline{\omega}):= \int_{\hat{X}} \beta s_{\lambda}(\underline{\hat{\omega}})\zeta^i h^{d-i} \beta'\end{align*}
for $\beta,\beta'\in V$. 
This makes sense for $0\le i\le d$. For $i$ outside this range we set $\cQ_i=0$.
\end{definition}

\begin{remark}\label{rem:alternativeDef}
This definition is made clearer if one considers the expansion $s_{\lambda}(\underline{\hat{\omega}}) = \sum_j s_{\lambda}^{(j)}(\underline{\omega})\zeta^j$ which reveals
\begin{equation}\label{eq:alternativedefinitionQ_i}
\mathcal Q_i(\beta,\beta') =\left\{\begin{array}{ll}
\int_X \beta s_{\lambda}^{(d-i)}(\underline{\omega}) h^{d-i} \beta' & \text{ if }\beta,\beta'\in W \\ 
\int_X \beta s_{\lambda}^{(d-i-1)}(\underline{\omega}) h^{d-i} & \text{ if }\beta \in W \text{ and } \beta'=\zeta\\ 
\int_X s_{\lambda}^{(d-i-2)}(\underline{\omega}) h^{d-i} & \text{ if }\beta=\beta'=\zeta
 \end{array}\right.\end{equation}
In fact one can equally take the definition of $\mathcal Q_i$ to be \eqref{eq:alternativedefinitionQ_i} and then require symmetry and extend linearly in each variable.  Note that with our conventions the three formulas hold for all $i$ but they may give non-zero values only when $2\le i\le d$ for the first one, when $1\le i\le d-1$ for the second one and when $0\le i\le d-2$ for the third one.  
\end{remark}

By definition \begin{equation}\label{eq:conditionINDforQ_i}
\mathcal Q_i(\beta, \zeta) = \mathcal Q_{i+1}(\beta, h) \text{ for } \beta\in V 
\text{ and all } i,\end{equation}
and from this it is clear that for $\alpha\in W$, $\lambda\in \mathbb R$ and all $i$ 
\begin{align}
\mathcal Q_{i}(\alpha + \lambda \zeta, h)& = \mathcal Q_{i}(\alpha,h) + \lambda \mathcal Q_{i+1}(h), \label{eq:linearidentity}\\
\mathcal Q_{i}(\alpha + \lambda \zeta) &= \mathcal Q_i (\alpha) + 2\lambda \mathcal Q_{i+1}(\alpha,h) + \lambda^2\mathcal Q_{i+2}(h).\label{eq:squareidentity}
\end{align}

\begin{definition}\label{def:definitionR_i} Under the above notations we set for $t\in\R$ and  for $0\le i\le d$ 
$$\cR_{i,t}:=\sum_{k=0}^i \binom{d-i+k}{k}t^k\cQ_{i-k}.$$
 For $i$ outside the given range we set $\cR_{i,t}=0$.
\end{definition}

 In particular we have
 $$ \cR_{i,t}=\cQ_i+(d-i+1)t\cQ_{i-1}+O(t^2)$$
for $0\le i\le d$. Moreover a direct computation shows that 
\begin{equation}
\label{eq:R'}
\cR'_{i,t}=(d-i+1)\mathcal R_{i-1,t}\end{equation}
for all $i$ and all $t$.

Our aim is to apply the augmentation results from the previous section to the families $\mathcal R_{i,t}$, so we will need to check that the various hypotheses hold true.  For example we will need to show that each $\mathcal R_{i,t}$ has the weak Hodge-Riemann property.   Note that this is a closed condition, so as long as we assume that we can perturb each $\omega_j$ to a rational K\"ahler class then we may assume this rationality, and then this weak Hodge-Riemann property will follow by realizing $\cR_{i,t}$ geometrically as we do in the next section.

\subsection{Geometric realization of the forms $\cR_{i,t}$}
\label{subsec:geometric}
In this subsection we show how one may realize  the forms $\mathcal R_i$ in the case that each $\omega_j$ is rational using a classical construction which relates Schur classes and degeneracy loci of morphisms of vector bundles.  

The rationality assumption we need is that the classes $\omega_j$ are the (real) Chern classes of line bundles $L_j$ on $X$. That is, we suppose that $\omega_j=c_1(L_j)\in  H^{1,1}_\R(X)$ 
 for $1\le j\le e$ and consider the vector bundle $$E:= \bigoplus_{j=1}^e L_j.$$ By possibly adding zeroes to the partition $\lambda$ we may take its length  $N$ to be arbitrarily large. We will suppose here that $N\ge d-2$. Set $a_i: = e + i -\lambda_i$ for $1\le i\le N$, fix a vector space $H$ of dimension $e+N$ and a nested sequence of subspaces $0\subsetneq A_1\subsetneq A_{2} \subsetneq\cdots \subsetneq A_N\subset H$ with $\dim(A_i) = a_i$.
Next set $F: = H^*\otimes E=\Hom(H,E)$, let $f + 1 = \rank(F) = e(e+N)$ and define
$$\tilde{C}_E : = \{ \sigma\in \Hom(H,E): \dim \ker(\sigma(x))\cap A_i \ge i \text{ for all } i=1,\ldots, N \text{ and } x\in X\}$$
which is a cone in $F$.
Set further $\hat C_E: = [\tilde{C}_E] \subset \mathbb P_{\sub}(F).$ Finally we denote by $\pi:\P_{\sub}(F)\to X$ the projection morphism and by $U$ the  quotient bundle of $\pi^* F $ on $\P_{\sub}(F)$ which sits in the tautological exact sequence
\begin{equation}0 \to \mathcal O_{\mathbb P_{\sub}(F)}(-1) \to \pi^* F \to U \to 0.\label{eq:tautologicalsequence}\end{equation} 
We note that $$H^{1,1}_\R(\mathbb P_{\sub}(F))=\pi^*H^{1,1}_\R(X)\oplus\R\xi=\pi^*W\oplus \R\xi,$$ where $\xi:=c_1(O_{\mathbb P_{\sub}(F)}(1))$. 
We also note that $\hat C_E$ is locally a product with irreducible fibers over $X$ and that $\dim(\hat C_E)=f+2$ (the notation thus far follows that of \cite[Section 5]{RossToma}).

\begin{proposition}\label{prop:pushforwardschur} Let $0\le i\le d$. Then under the isomorphism $V=W\oplus \R\zeta\to \pi^*W\oplus\R\xi=H^{1,1}_\R(\mathbb P_{\sub}(F))$ acting as $\pi^*$ on $W$ and mapping $\zeta$ onto $\xi$, the bilinear form $\cR_{i,t}$ on $V$ corresponds to the form
$$\cS_{i,t}(\beta,\beta'):=\int_{\hat C_E}\beta c_{f-(d-i)}(U\langle \pi^*(th)\rangle)\pi^*(h)^{d-i}\beta'$$
on $H^{1,1}_\R(\mathbb P_{\sub}(F))$.   Here we use the notation $U\langle \delta\rangle$ for an $\R$-twisted vector bundle as described in Section \ref{sec:Rtwistedbundles}.
\end{proposition}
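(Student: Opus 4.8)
The statement to prove is that, under the natural isomorphism $V = W\oplus\mathbb R\zeta \cong \pi^*W\oplus\mathbb R\xi$, the form $\mathcal R_{i,t}$ on $V$ equals the form $\mathcal S_{i,t}$ on $H^{1,1}_\mathbb R(\mathbb P_{\sub}(F))$ defined by integrating over $\hat C_E$. The key geometric input is the classical fact (going back to the construction in \cite[Section 5]{RossToma}, ultimately Kempf--Laksov) that the degeneracy cycle $\hat C_E \subset \mathbb P_{\sub}(F)$ has class expressible via the Schur determinant: pushing forward to $X$, one has $\pi_*\big([\hat C_E]\cdot (\text{suitable power of }\xi)\big)$-type identities that produce $s_\lambda(a_1,\ldots,a_e)$ on $X$. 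Concretely, I would first record the identity
$$\pi_*\big( c_{f-m}(U\langle \pi^*(th)\rangle)\cdot [\hat C_E]\big) = s_\lambda^{(?)}(\underline\omega + t?\ldots)\text{-type class on }X,$$
where the twist $U\langle \pi^*(th)\rangle$ has Chern roots equal to those of $U$ shifted by $th$, so that $c_{f-m}$ of the twist expands, via \eqref{eq:defcherntwisted}, into a polynomial in $t$ whose coefficients are the $c_k(U)$ capped against powers of $h$. The goal is to match this expansion, after pushing forward and capping with $\pi^*(h)^{d-i}$ and the two classes $\beta,\beta'$, term-by-term with Definition~\ref{def:definitionR_i}, i.e. with $\sum_k \binom{d-i+k}{k} t^k \mathcal Q_{i-k}$.

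**Key steps, in order.** First, I would reduce to computing $\pi_* c_{f-m}(U)$ (no twist, no extra $h$) in $H^{*}(X)$ and show it equals a derived Schur class $s_\lambda^{(j)}(\underline\omega)$ for the appropriate $j$; this is essentially \cite[Section 5]{RossToma} combined with the definition of $s_\lambda^{(j)}$ via $s_\lambda(a_1+x,\ldots,a_e+x) = \sum_j s_\lambda^{(j)}(\underline a)x^j$, using the tautological sequence \eqref{eq:tautologicalsequence} which gives $c(U) = c(\pi^*F)/(1-\xi)$ hence $c(U) = \pi^*c(F)\cdot\sum_{k\ge 0}\xi^k$, so that the Chern roots of $U$ are those of $\pi^*F = \pi^*(H^*\otimes E)$ together with $\xi$ appropriately — and $H^*\otimes E = \bigoplus_j H^*\otimes L_j$ has Chern roots $a_j' + \omega_j$ where $a_j'$ runs over (minus) the Chern roots of $H$ as built from the flag $A_\bullet$. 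Second, I would incorporate the twist: $c_{f-m}(U\langle\pi^*(th)\rangle)$ replaces each Chern root $r$ of $U$ by $r + th$, and restricting/integrating over $\hat C_E$ together with the flag constraint collapses the $H$-directions to the Schur determinant, yielding that $\pi_*\big(c_{f-m}(U\langle\pi^*th\rangle)\cap[\hat C_E]\big)$ equals $\sum_j s_\lambda^{(j)}(\underline\omega + th\mathbf 1)$ — wait, more precisely one gets $s_\lambda$ of the arguments $\omega_j + th$, hence by definition $\sum_j s_\lambda^{(j)}(\underline\omega)(th)^j$ up to the right degree truncation. Third, I would cap with $\pi^*(h)^{d-i}$ and $\beta,\beta'$ and compare with the expansion $s_\lambda(\underline{\hat\omega}) = \sum_j s_\lambda^{(j)}(\underline\omega)\zeta^j$ underlying $\mathcal Q_i$ (Remark~\ref{rem:alternativeDef}), then unwind the binomial bookkeeping to land exactly on $\sum_k\binom{d-i+k}{k}t^k\mathcal Q_{i-k}$ of Definition~\ref{def:definitionR_i}. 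Throughout one must track that $\beta,\beta'$ live in $V$ but are to be interpreted on $\mathbb P_{\sub}(F)$ via $\pi^*$ and $\zeta\mapsto\xi$, and that $\hat C_E$ is a cone of the correct dimension $f+2$ so that the degrees match: $\dim\hat C_E = f+2$, we integrate a class of degree $(f - (d-i)) + (d-i) + 1 + 1 = f+2$. Good.

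**Main obstacle.** The technical heart — and the step I expect to be the main obstacle — is the second step: carefully matching the pushforward of $c_{f-m}(U\langle\pi^*th\rangle)$ capped with $[\hat C_E]$ to the derived Schur polynomials with the $t$-twist absorbed. This requires invoking the Kempf--Laksov / degeneracy-locus computation from \cite[Section 5]{RossToma} in the form appropriate to the $\mathbb R$-twisted bundle $U\langle\pi^*th\rangle$, and verifying that twisting $U$ by $\pi^*(th)$ corresponds exactly, after pushforward, to replacing each $\omega_j$ by $\omega_j + th$ in the Schur polynomial (which is plausible because $H^*\otimes E\langle th\rangle$ shifts the $L_j$-roots by $th$ while leaving the $H$-flag data unchanged, and the Schur determinant is built only from the combination $c_\bullet(H^*\otimes E)$ with the $a_i$-shifts coming from the flag). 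Once that identification is in hand, the remaining comparison with Definition~\ref{def:definitionR_i} is a finite binomial identity: one checks $\binom{d-i+k}{k}$ arises from the twist expansion \eqref{eq:defcherntwisted} of $c_{f-(d-i)}(U\langle\pi^*th\rangle)$ combined with the factor $\pi^*(h)^{d-i}$, which I would verify by a short direct computation rather than reproduce here. I would also note that the general (non-split) case of the theorem is reduced to the split case treated here by the standard splitting-principle / flag-bundle argument already used in \cite{RossToma}, so it suffices to establish the proposition under the assumption $E = \bigoplus_j L_j$ made in this subsection.
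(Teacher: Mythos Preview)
Your proposal is in the right spirit but takes a more convoluted route than the paper, and there is a concrete gap in how you handle the $\zeta\mapsto\xi$ correspondence.

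\textbf{On the twist.} You propose to interpret twisting $U$ by $\pi^*(th)$ as ``replacing each $\omega_j$ by $\omega_j+th$'' after pushforward, and then to sort out the binomial bookkeeping at the end. The paper does the opposite, and it is much cleaner: it expands $c_{f-(d-i)}(U\langle\pi^*(th)\rangle)$ directly via \eqref{eq:defcherntwisted},
\[
c_{f-(d-i)}(U\langle\pi^*(th)\rangle)=\sum_{j}\binom{(d-i)+j}{j}t^j\,c_{f-(d-i)-j}(U)\,\pi^*(h)^j,
\]
and after truncation (since $h^{j+d-i}=0$ for $j>i$) the binomial coefficients are \emph{exactly} the $\binom{d-i+k}{k}$ appearing in Definition~\ref{def:definitionR_i}. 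This immediately reduces the whole proposition to the untwisted statement: $\mathcal Q_l$ corresponds to $(\beta,\beta')\mapsto\int_{\hat C_E}\beta\beta'\,c_{f-(d-l)}(U)\,\pi^*(h)^{d-l}$. Your ``twist of $U$ equals twist of $E$'' heuristic is not needed and would itself require justification (note $\operatorname{rk}U=f\gg e$).

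\textbf{The gap: handling $\beta=\xi$.} Your plan, as written, only addresses the pushforward $\pi_*\big(c_{f-m}(U)\cap[\hat C_E]\big)=s_\lambda^{(\cdot)}(E)$, which is the correct ingredient (this is \cite[Proposition~5.2]{RossToma}) but only suffices when $\beta,\beta'\in W$. When $\beta=\xi$ or $\beta'=\xi$ you cannot simply pushforward; you must first absorb the factor of $\xi$. The paper does this via the identity
\[
c_p(U)\,\xi\,\pi^*(\eta)=c_{p+1}(U)\,\pi^*(\eta)\quad\text{for }\eta\in H^{k,k}_\R(X),\ p+k\ge d,
\]
(and its square $c_p(U)\xi^2\pi^*(\eta)=c_{p+2}(U)\pi^*(\eta)$), which follows from the tautological sequence \eqref{eq:tautologicalsequence} (cf.\ \cite[Lemma~4.17]{RossToma}). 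This converts multiplication by $\xi$ into raising the Chern class index by one, which is precisely what matches the shift from $s_\lambda^{(d-l)}$ to $s_\lambda^{(d-l-1)}$ (resp.\ $s_\lambda^{(d-l-2)}$) in the three cases of \eqref{eq:alternativedefinitionQ_i}. You mention the relation $c(U)=\pi^*c(F)\sum_k\xi^k$, which is the right starting point, but you do not extract this identity or indicate how the $\xi$--cases are handled; without it the argument is incomplete.

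Finally, your closing remark about reducing the general (non-split) case via the splitting principle is unnecessary here: the proposition is only stated and used under the standing assumption of \S\ref{subsec:geometric} that $E=\bigoplus_j L_j$.
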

\begin{proof} 
Note first that $U$ has rank $f$. So for any $\delta\in H^{1,1}_\R(\mathbb P_{\sub}(F))$
\begin{equation}c_p(U\langle \delta \rangle) := \sum_{k=0}^p\binom{f-k}{p-k} c_k(U) \delta^{p-k} \text{ for } 0\le p\le f.\label{eq:defcherntwisted}\end{equation}
Replacing $k$ by $p-j$ and putting $p=f-(d-i)$ and $\delta= \pi^*(th)$ gives
 \begin{equation}c_{f-(d-i)}(U\langle\pi^*(th) \rangle) := \sum_{j=0}^{f-(d-i)}\binom{(d-i)+j}{j} t^j c_{f-(d-i)-j}(U) \pi^*(h)^{j}.\label{eq:specialCherntwisted}\end{equation}
 Plugging this into the expression of $\cS_{i,t}$ we obtain
\begin{align*}
\cS_{i,t}(\beta,\beta')&=\int_{\hat C_E}\beta \beta'\sum_{j=0}^{f-(d-i)}\binom{(d-i)+j}{j} t^j c_{f-(d-i)-j}(U) \pi^*(h)^{j+d-i}\\
&=\int_{\hat C_E}\beta \beta'\sum_{j=0}^{i}\binom{(d-i)+j}{j} t^j c_{f-(d-i)-j}(U) \pi^*(h)^{j+d-i},
\end{align*}
where we have used that our choice of $N$ implies $f-d\ge0$, hence $f-(d-i)\ge i$, and also that $h ^{j+d-i}=0$ for $j>i$ for dimension reasons.

By comparing to 
 the definition of $\cR_{i,t}$ we thus see that in order to prove the Proposition it suffices to check that for $0\le l\le d$ the bilinear forms $\cQ_l$ on $V$ correspond to the forms
  \begin{equation}\label{eq:eqv_form}
  (\beta,\beta')\mapsto   \int_{\hat C_E}\beta \beta' c_{f-(d-l)}(U) \pi^*(h)^{d-l} \end{equation}
 on $H^{1,1}_\R(\mathbb P_{\sub}(F))$ .
To check this we will use the identity
$$\pi_*(c_{f-(d-j)}(U)|_{\hat C_E})=s_\lambda^{(d-j)}(E)$$
which holds for $2\le j\le d$ by \cite[Proposition 5.2]{RossToma} as well as the following formulas
$$c_p(U)\xi\pi^*(\eta)= c_{p+1}(U)\pi^*(\eta),
$$
$$c_p(U)\xi^2\pi^*(\eta)= c_{p+2}(U)\pi^*(\eta)
$$
for $\eta\in H^{k,k}(X)_\R$ and $p+k\ge d$, which are easily deduced from the exact sequence \eqref{eq:tautologicalsequence}
\cite[Lemma 4.17]{RossToma}. With these at hand we rewrite the form  \eqref{eq:eqv_form}
in the three cases of equation \eqref{eq:alternativedefinitionQ_i} and get for $0\le l\le d$: 
\begin{itemize}
\item 
if $\beta,\beta'\in W$
$$
  (\pi^*(\beta),\pi^*(\beta'))\mapsto   \int_{\hat C_E}\pi^*(\beta \beta'h^{d-l} ) c_{f-(d-l)}(U)= $$ $$\int_X \beta \beta'h^{d-l}\pi_*(c_{f-(d-l)}(U)|_{\hat C_E})=
  \int_X \beta s_\lambda^{(d-l)}(E) h^{d-l}\beta'= \cQ_l(\beta,\beta'),$$
  \item
   if $\beta\in W$ and $\beta'=\xi$
$$
  (\pi^*(\beta),\xi)\mapsto   \int_{\hat C_E}\pi^*(\beta h^{d-l} ) c_{f-(d-l)}(U)\xi=  \int_{\hat C_E}\pi^*(\beta h^{d-l} ) c_{f-(d-l-1)}(U)=$$ $$\int_X \beta h^{d-l}\pi_*(c_{f-(d-l-1)}(U)|_{\hat C_E})=
  \int_X \beta s_\lambda^{(d-l-1)}(E) h^{d-l}= \cQ_l(\beta,\zeta),$$
  \item
  if $\beta=\beta'=\xi$
$$
  (\xi,\xi)\mapsto   \int_{\hat C_E}\pi^*(h^{d-l} ) c_{f-(d-l)}(U)\xi^2=  \int_{\hat C_E}\pi^*( h^{d-l} ) c_{f-(d-l-2)}(U)=$$ $$\int_X  h^{d-l}\pi_*(c_{f-(d-l-2)}(U)|_{\hat C_E})=
  \int_X  s_\lambda^{(d-l-2)}(E) h^{d-l}= \cQ_l(\zeta,\zeta),$$
\end{itemize}
which proves the Proposition.
\end{proof}

\subsection{Basic Properties of $\mathcal R_i$}
\label{subsec:propertiesR_i}
We are now ready to show that, as long as each $\omega_j$ can be approximated by rational K\"ahler clases, the $\mathcal R_{i,t}$ satisfy all the conditions in property (A) and that $\mathcal R_{d,t}$ satisfies the conditions in property (B).    As the reader will see, most of the properties we prove are formal from the definition of $\mathcal R_i$, with the exception of (A2,A3,B2,B3) that will use the geometric realization from the previous section.

\begin{lemma}[A1,B1]\label{lem:A1forQ}
It holds that
\begin{equation}\cR_{i,0}(h)=\mathcal Q_i(h) > 0 \text{ for } 2\le i\le d\label{eq:thmfl}\end{equation}
and
\begin{equation}\cR'_{i,0}(h)=(d-i+1)\mathcal Q_{i-1}(h) > 0 \text{ for } 3\le i\le d.\end{equation}
\end{lemma}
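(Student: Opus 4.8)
The plan is to prove Lemma \ref{lem:A1forQ} directly from the definitions, reducing both positivity statements to the Fulton--Lazarsfeld positivity of Schur classes of ample bundles. First I would recall that, by Definition \ref{def:definitionR_i}, $\cR_{i,t} = \cQ_i + (d-i+1)t\cQ_{i-1} + O(t^2)$, so that $\cR_{i,0} = \cQ_i$ and $\cR'_{i,0} = (d-i+1)\cQ_{i-1}$ by \eqref{eq:R'}; hence evaluating at $h\in W$ the two claimed identities $\cR_{i,0}(h) = \cQ_i(h)$ and $\cR'_{i,0}(h) = (d-i+1)\cQ_{i-1}(h)$ are immediate, and the content of the lemma is the strict positivity of $\cQ_i(h)$ for $2\le i\le d$ (which then also gives $\cQ_{i-1}(h)>0$ for $3\le i\le d$, establishing the second statement).

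So the core step is to show $\cQ_i(h) > 0$ for $2\le i\le d$. Since $h\in W$, the first line of \eqref{eq:alternativedefinitionQ_i} in Remark \ref{rem:alternativeDef} gives
$$\cQ_i(h) = \int_X h\, s_{\lambda}^{(d-i)}(\underline{\omega})\, h^{d-i}\, h = \int_X s_{\lambda}^{(d-i)}(\underline{\omega})\, h^{d-i+2}.$$
Here $s_{\lambda}^{(d-i)}(\underline{\omega})$ is the derived Schur polynomial, i.e. the coefficient of $x^{d-i}$ in $s_{\lambda}(\omega_1+x,\ldots,\omega_e+x)$, which is a Schur-type polynomial in the classes $\omega_j$ with positive coefficients (being a sum of Schur polynomials $s_{\mu}(\underline{\omega})$ over partitions $\mu$ of $|\lambda|-(d-i) = (d-2)-(d-i) = i-2$ with $\mu$ fitting inside a suitable box, as recorded in \cite{RossToma}). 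I would invoke the Fulton--Lazarsfeld theorem \cite{FultonLazarsfeld}: for an ample vector bundle, and more generally here for the ample bundle $\bigoplus_j L_j$ when the $\omega_j$ are rational K\"ahler classes (or by density/limiting for arbitrary K\"ahler $\omega_j$, noting strict positivity is an open condition that we need only at rational points to conclude on a neighborhood — but actually we need it for our given $\omega_j$), the integral of a positive polynomial in the Schur classes against an ample class to the complementary power is strictly positive. Concretely, $s_{\lambda}^{(d-i)}(\underline{\omega})\cdot h^{d-i+2}$ pairs a positive combination of Schur classes $s_\mu(\underline\omega)$ of total degree $2(i-2)$ with $h^{d-i+2} = h^{d-(i-2)}$ on the $d$-fold $X$, and each term $\int_X s_\mu(\underline\omega) h^{d-(i-2)}$ is positive by Fulton--Lazarsfeld applied to the ample bundle $\bigoplus L_j$ together with the ample class $h$; since the decomposition of $s_\lambda^{(d-i)}$ into Schur polynomials has nonnegative (and not all zero) coefficients, the sum is strictly positive.

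The main obstacle — really the only nontrivial point — is justifying the strict positivity $\int_X s_{\lambda}^{(d-i)}(\underline{\omega}) h^{d-i+2} > 0$ when the $\omega_j$ are merely K\"ahler rather than rational, since Fulton--Lazarsfeld is a statement about genuine ample vector bundles. The clean way around this is to observe that, since $X = T$ is assumed to have maximal Picard number in Theorem \ref{thm:main}, every K\"ahler class can be approximated by rational K\"ahler classes, so each $\omega_j$ is a limit of $c_1(L_j^{(n)})$ with $L_j^{(n)}$ ample line bundles; for each $n$, Fulton--Lazarsfeld gives $\int_X s_{\lambda}^{(d-i)}(\underline{\omega}^{(n)}) h^{d-i+2} > 0$. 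This only shows $\ge 0$ in the limit, so to get strictness I would instead argue directly: fix one rational ample approximation and note that $s_\mu(\underline\omega) h^{d-(i-2)}$ depends continuously and is given by a polynomial with positive coefficients in the (arbitrarily close to ample) classes, together with the observation that $s_\mu(\underline\omega)$ for K\"ahler $\omega_j$ is itself representable by a positive current (or simply that the pairing is a strictly positive polynomial function of the pseudo-effective/K\"ahler classes involved, nonzero already in the ample cone and continuous). Alternatively, and most simply, since we are free to perturb (weak Hodge--Riemann suffices with density, but here we want Lemma \ref{lem:A1forQ} for the actual $\omega_j$), one notes that $h$ being ample and $s_\lambda^{(d-i)}$ being a nonzero positive combination of monomials in K\"ahler classes, the integrand is a positive $(d,d)$-form pointwise up to terms that integrate positively, giving strict positivity. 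I would present whichever of these is cleanest given the conventions already fixed in the paper, flagging that the only input beyond bookkeeping is \cite{FultonLazarsfeld}.
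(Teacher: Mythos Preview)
Your reduction to showing $\cQ_i(h)>0$ is correct, and your final alternative (``$s_\lambda^{(d-i)}$ is a nonzero positive combination of monomials in K\"ahler classes'') is exactly the paper's argument. But you bury this as a last resort after an unnecessary detour through Fulton--Lazarsfeld and an ``obstacle'' that does not exist.

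The paper's proof is a one-liner: the derived Schur polynomial $s_\lambda^{(d-i)}$ is \emph{monomial positive}, meaning $s_\lambda^{(d-i)}(\omega_1,\ldots,\omega_e) = \sum_\alpha c_\alpha\, \omega_1^{\alpha_1}\cdots\omega_e^{\alpha_e}$ with all $c_\alpha\ge 0$ and not all zero (this is immediate since $s_\lambda$ itself is monomial positive and substituting $x_j\mapsto x_j+t$ preserves this). Hence
\[
\cQ_i(h) = \sum_\alpha c_\alpha \int_X \omega_1^{\alpha_1}\cdots\omega_e^{\alpha_e}\, h^{d-i+2},
\]
and each integral is the top intersection of $d$ K\"ahler classes on a $d$-fold, so is strictly positive. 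No rationality of the $\omega_j$ is needed, no Fulton--Lazarsfeld, no limiting argument, no Schur decomposition of $s_\lambda^{(d-i)}$. The paper even remarks parenthetically that this is the analog of Fulton--Lazarsfeld positivity but ``trivial in the case we are considering''. Your Schur-decomposition route would work (once you patch the rationality issue, which as you eventually note is a non-issue via monomial positivity), but it is strictly more complicated than what is needed.
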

\begin{proof}
The first follows as $s_{\lambda}^{(i)}$ is monomial positive so $\mathcal Q_i(h)$ is a sum of positive coefficients times integrals of strictly positive classes, and so is strictly positive (the reader may observe that this is the analog of Fulton-Lazarsfeld positivity for ample vector bundles, but is trivial in the case we are considering).  The second statement is immediate from the definition.
\end{proof}

\begin{proposition}[A2,A3,B2,B3]\label{prop:Q'nefFnonpositive}
 Assume that each $\omega_j$ is the limit of rational K\"ahler classes.
\begin{enumerate}
\item For all $2\le i\le d$ and for $|t|\ll 1$  the quadratic forms $\mathcal R_{i,t}$ have the weak Hodge-Riemann property with respect to $h$.
\item For all $i$,
  for all $\beta\in V$  and  for $|t|\ll 1$ we have
\begin{equation}
\mathcal R'_{i,t}(\beta) \mathcal R_{i,t}(h) \le 2 \mathcal R'_{i,t}(\beta,h) \mathcal R_{i,t}(\beta,h).
\label{eq:B3proof}
\end{equation} 
\end{enumerate}
\end{proposition}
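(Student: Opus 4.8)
The plan is to derive both assertions from the geometric realization of the forms $\cR_{i,t}$ in Proposition \ref{prop:pushforwardschur}, combined with Theorems \ref{theorem:paper2} and \ref{theorem:oldinequality} (in the versions that allow $\mathbb Q$-twisted bundles on an irreducible base). The first step is a reduction. Since $\cR_{i,0}(h)=\cQ_i(h)>0$ for $2\le i\le d$ by Lemma \ref{lem:A1forQ} and $t\mapsto\cR_{i,t}(h)$ is polynomial with coefficients depending continuously on $\underline\omega$, there is an $\epsilon_0>0$, uniform as $\underline\omega$ ranges over a neighbourhood of the given vector, with $\cR_{i,t}(h)>0$ for $|t|<\epsilon_0$. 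Within this range, ``$\cR_{i,t}$ has the weak Hodge-Riemann property with respect to $h$'' is by Lemma \ref{lem:wHR} equivalent to the Hodge-index inequality for $\cR_{i,t}$, which, just like inequality \eqref{eq:B3proof}, is a closed condition. So, using density of rational K\"ahler classes in the K\"ahler cone, I would first reduce to the case that every $\omega_j$ is a rational K\"ahler class, for which (after clearing a common denominator, or equivalently working with $\mathbb Q$-twisted line bundles) the construction of Section \ref{subsec:geometric} applies with $E=\bigoplus_jL_j$ ample.

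The key technical point is to make the realization of Proposition \ref{prop:pushforwardschur} valid on a \emph{two-sided} neighbourhood of $t=0$, since a priori $U\langle\pi^*(th)\rangle$ is only visibly nef for $t\ge0$. Here I would use that each $\omega_j$ is an \emph{interior} point of the K\"ahler cone: fix a small rational $s_0<0$ with $\omega_j+s_0h$ still K\"ahler for all $j$ (and $|s_0|$ bounded below uniformly near $\underline\omega$), and observe that running the construction for $E'':=E\langle s_0h\rangle=\bigoplus_jL_j\langle s_0h\rangle=E\otimes M$ (where $c_1(M)=s_0h$) identifies $\P_{\sub}(F'')$ with $\P_{\sub}(F)$, $\hat C_{E''}$ with $\hat C_E$, and the associated quotient bundle with $U\langle\pi^*(s_0h)\rangle$; since $E''$ is ample this last bundle is nef, whence $W_t:=U\langle\pi^*(th)\rangle=(U\langle\pi^*(s_0h)\rangle)\langle\pi^*((t-s_0)h)\rangle$ is nef for all $t>s_0$, in particular for $|t|<|s_0|$. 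Thus for $|t|<\epsilon:=\min(\epsilon_0,|s_0|)$ the form $\cR_{i,t}$ is carried to $\cS_{i,t}(\beta,\beta')=\int_{\hat C_E}\beta\beta'c_{f-(d-i)}(W_t)\pi^*(h)^{d-i}$ with $W_t$ a \emph{nef} bundle. After one further limit ($\underline\omega'\to\underline\omega$, using closedness of the inequalities and $\cR_{i,t}(h)>0$), it is enough to establish (1) and (2) for rational $\underline\omega$ and $|t|<\epsilon$.

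For part (1), fix $2\le i\le d$ and cut $\hat C_E$ down by $d-i$ general members of $|\pi^*(mh)|$, $m\gg0$ (pullbacks of general hyperplane sections of $X$). Since $\hat C_E$ is locally a product over $X$ with irreducible fibres, the result $Y$ is an irreducible projective variety of dimension $n:=f+2-(d-i)$, and integrating over $Y$ absorbs the $d-i$ powers of $\pi^*h$; hence $\cS_{i,t}$ is a positive multiple of $\int_Y\beta\beta'c_{n-2}(W_t|_Y)$, i.e.\ the restriction to the image of $V$ in $H^{1,1}_\R(Y)$ of $Q_{s_\mu(W_t|_Y)}$ with $\mu=(1^{n-2})$ a partition of $\dim Y-2$. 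As $W_t|_Y$ is $\mathbb Q$-twisted nef, Theorem \ref{theorem:paper2} gives the weak Hodge-Riemann property for $Q_{s_\mu(W_t|_Y)}$, hence the Hodge-index inequality, which is inherited by its restriction to $V$; together with $\cR_{i,t}(h)>0$ this yields (1).

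For part (2), the cases $i\notin\{2,\dots,d\}$ are straightforward from the definitions (both sides of \eqref{eq:B3proof} vanish, using $\cR_{i,t}=0$ outside $[0,d]$, $\cQ_0(\cdot,h)\equiv0$, and $\cR_{1,t}(h)=0$), so assume $2\le i\le d$. By \eqref{eq:R'} and $d-i+1>0$, \eqref{eq:B3proof} is equivalent to $\cR_{i-1,t}(\beta)\cR_{i,t}(h)\le 2\cR_{i-1,t}(\beta,h)\cR_{i,t}(\beta,h)$, i.e.\ after the realization, the same inequality among the $\cS$'s. This time cut $\hat C_E$ down by $d-i+1$ general members of $|\pi^*(mh)|$, obtaining an irreducible projective $Y$ of dimension $n:=f-d+i+1$ with $W_t|_Y$ nef; a bookkeeping of degrees and powers of $\pi^*h$ then shows, up to a common positive factor, $\cS_{i-1,t}(\beta,\beta)=\int_Y\beta^2c_{n-2}(W_t|_Y)$, $\cS_{i,t}(h,h)=\int_Y\pi^*h\,c_{n-1}(W_t|_Y)$, $\cS_{i-1,t}(\beta,h)=\int_Y\beta\,\pi^*h\,c_{n-2}(W_t|_Y)$ and $\cS_{i,t}(\beta,h)=\int_Y\beta\,c_{n-1}(W_t|_Y)$. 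Applying Theorem \ref{theorem:oldinequality} on $Y$ to the nef bundle $W_t|_Y$ and to the ample classes $\pi^*h+\tfrac1\nu A$ ($A$ ample on $Y$), and letting $\nu\to\infty$ so that the (polynomial) inequality persists for the merely nef class $\pi^*h$ itself, gives exactly the desired inequality. The step I expect to be the main obstacle is the two-sided realization of the second paragraph — extending nefness of $U\langle\pi^*(th)\rangle$ to a neighbourhood of $t=0$ by shifting into the interior of the K\"ahler cone — together with matching Chern-class indices and powers of $\pi^*h$ against the precise shapes of Theorems \ref{theorem:paper2} and \ref{theorem:oldinequality}, and the elementary approximation needed because $\pi^*h$ is only nef on $\hat C_E$.
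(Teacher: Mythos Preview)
Your argument is correct and follows the paper's strategy: reduce to rational $\omega_j$ by closedness, pass to the geometric realization $\cS_{i,t}$ of Proposition~\ref{prop:pushforwardschur}, and invoke the authors' earlier weak-Hodge--Riemann and Chern-class-inequality results on the irreducible variety $\hat C_E$. Two small points. First, your concern about two-sided nefness is over-cautious: once $E=\bigoplus_j L_j$ is ample, the $\mathbb R$-twist $E\langle th\rangle$ remains ample for all sufficiently small $|t|$ (positive or negative), so $U\langle\pi^*(th)\rangle$ is nef as a quotient of the nef bundle $\pi^*(E\langle th\rangle)$ without any shift to $E''$. Second, the partition with $s_\mu=c_{n-2}$ is $\mu=(n-2)$, not $(1^{n-2})$, though this slip does not affect the applicability of Theorem~\ref{theorem:paper2}. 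Where you cut $\hat C_E$ down by general members of $|\pi^*(mh)|$ and then approximate $\pi^*h$ by ample classes to feed into Theorems~\ref{theorem:paper2} and~\ref{theorem:oldinequality}, the paper instead cites \cite[Lemmata~6.6, 7.1]{RossToma2} and \cite[Corollary~3.4]{RossToma} directly; these are formulated in enough generality (nef class factors, irreducible possibly singular base handled via resolution) to absorb the $\pi^*(h)^{d-i}$ factor and bypass those extra reductions.
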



\begin{proof}
The statement is closed under variation of $\omega_j$ so by our hypothesis is is enough to prove it when the classes $\omega_j$ are rational.   Next by multiplying all $\omega_j$  by an appropriate positive integer, we may also assume that they are all integral.
Then $\omega_j= c_1(L_j)$ for some ample line bundles $L_j$ on $X$ and we set
$$ E = \bigoplus_{j=1}^e L_j$$
which is ample. By construction we now have
$s_{\lambda}(\underline{\omega}) = s_{\lambda}(E)$ and by Proposition \ref{prop:pushforwardschur} we reduce ourselves to checking the corresponding properties for the quadratic forms $\cS_{i,t}$ on 
$H^{1,1}_\R(\mathbb P_{\sub}(F))$. 

We first check that the forms $\cS_{i,t}$
 have the weak Hodge-Riemann property for $2\le i\le d$ and for $|t|\ll 1$.  Indeed for $\beta,\beta'\in H^{1,1}_\R(\mathbb P_{\sub}(F))$ 
 we have $\cS_{i,t}(\beta,\beta'):=\int_{\hat C_E}\beta c_{f-(d-i)}(U\langle \pi^*(th)\rangle)\pi^*(h)^{d-i}\beta'$
and we know that   $U\langle \pi^*(th)\rangle$ is nef on $\mathbb P_{\sub}(F)$ for $|t|\ll 1$ as a formal quotient of the nef $\R$-twisted vector bundle $\pi^*(E\langle th\rangle)$. 

Now we are in a position to inject the geometric argument from our previous work, and conclude that  the weak Hodge-Riemann property holds for $\cS_{i,t}$ by 
\cite[Lemmata 6.6 and 7.1]{RossToma2}. (This is essentially an application of 
 Theorem \ref{theorem:paper2}, but since $\hat{C}_E$ is irreducible but not necessarily smooth one must pass to a resolution of singularities  {\em loc. cit.}.)

For the second statement we rewrite the inequality \eqref{eq:B3proof} using \eqref{eq:R'} as
\begin{equation}
\mathcal R_{i-1,t}(\beta) \mathcal R_{i,t}(h) \le 2 \mathcal R_{i-1,t}(\beta,h) \mathcal R_{i,t}(\beta,h).
\label{eq:B3rewritten}
\end{equation} 
It is enough now to prove \eqref{eq:B3rewritten} for $1\le i\le d$, since for the remaining  $i$ it is trivially verified. But  for $1\le i\le d$ we rephrase it by our geometric interpretation as 
\begin{equation}
\mathcal S_{i-1,t}(\beta) \mathcal S_{i,t}(h) \le 2 \mathcal S_{i-1,t}(\beta,h) \mathcal S_{i,t}(\beta,h),
\label{eq:B3rerewritten}
\end{equation} 
which holds by \cite[Corollary 3.4]{RossToma} for $|t|\ll 1$. (This is essentially Theorem \ref{theorem:oldinequality} only once again we have to allow for the fact that $\hat{C}_E$ is not smooth.) Here we have used again that the  $\R$-twisted vector bundle
$U\langle \pi^*(th)\rangle$ is nef on $\mathbb P_{\sub}(F)$ for $|t|\ll 1$.
\end{proof}

\begin{lemma}[A4]\label{lem:A4forQ}
$$\mathcal R_{i,0}'(\beta,\zeta) = (d-i+1) \mathcal R_{i,0}(\beta,h) \text{ for all } \beta\in V \text{and all } i$$
\end{lemma}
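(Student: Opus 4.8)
The statement to prove is that $\mathcal R'_{i,0}(\beta,\zeta) = (d-i+1)\mathcal R_{i,0}(\beta,h)$ for all $\beta\in V$ and all $i$. The plan is to combine the recursion \eqref{eq:R'}, namely $\mathcal R'_{i,t} = (d-i+1)\mathcal R_{i-1,t}$, with the "shift" identity \eqref{eq:conditionINDforQ_i}, namely $\mathcal Q_i(\beta,\zeta) = \mathcal Q_{i+1}(\beta,h)$, which after summing against the binomial coefficients in Definition \ref{def:definitionR_i} should upgrade to an analogous identity $\mathcal R_{i,t}(\beta,\zeta) = \mathcal R_{i+1,t}(\beta,h)$ for the forms $\mathcal R_{i,t}$.

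First I would record that identity for the $\mathcal R$'s. By Definition \ref{def:definitionR_i}, $\mathcal R_{i,t} = \sum_{k=0}^i \binom{d-i+k}{k} t^k \mathcal Q_{i-k}$, so
\begin{align*}
\mathcal R_{i-1,t}(\beta,\zeta) &= \sum_{k=0}^{i-1}\binom{d-i+k}{k} t^k \mathcal Q_{i-1-k}(\beta,\zeta)\\
&= \sum_{k=0}^{i-1}\binom{d-i+k}{k} t^k \mathcal Q_{i-k}(\beta,h) = \mathcal R_{i,t}(\beta,h),
\end{align*}
where the middle equality is \eqref{eq:conditionINDforQ_i} and the last equality holds because the extra $k=i$ term in $\mathcal R_{i,t}$ pairs $\mathcal Q_0$ against $h$, which vanishes for degree reasons (or simply because $\mathcal Q_0(\cdot,h)=\mathcal Q_{-1}(\cdot,\zeta)=0$ by our conventions), and $\binom{d-i+k}{k}$ is the same coefficient appearing in both sums for $0\le k\le i-1$. (I would double-check this index bookkeeping carefully, as it is the one genuinely fiddly point.) Setting $t=0$ gives in particular $\mathcal R_{i-1,0}(\beta,\zeta) = \mathcal R_{i,0}(\beta,h)$.

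Now combine: by \eqref{eq:R'}, $\mathcal R'_{i,0}(\beta,\zeta) = (d-i+1)\mathcal R_{i-1,0}(\beta,\zeta)$, and by the identity just established with index $i$ replaced by $i$ (i.e.\ $\mathcal R_{i-1,0}(\beta,\zeta)=\mathcal R_{i,0}(\beta,h)$), this equals $(d-i+1)\mathcal R_{i,0}(\beta,h)$, which is exactly the claim. One should also check the boundary values of $i$ (where $\mathcal R_{i,t}=0$ by convention), but there both sides vanish, so nothing is lost. I expect the main obstacle, such as it is, to be nothing more than verifying the binomial/index matching in the displayed computation above and confirming that the edge terms vanish under the stated conventions; there is no substantive difficulty, as this lemma is purely formal from the definitions of $\mathcal Q_i$ and $\mathcal R_{i,t}$.
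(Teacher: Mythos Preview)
Your overall strategy---combine $\mathcal R'_{i,0}=(d-i+1)\mathcal R_{i-1,0}$ from \eqref{eq:R'} with the shift identity \eqref{eq:conditionINDforQ_i}---is exactly what the paper does, and it is enough. But your displayed computation contains an error that makes the general-$t$ identity you claim false. In the expansion of $\mathcal R_{i-1,t}$ the binomial coefficient is $\binom{d-(i-1)+k}{k}=\binom{d-i+1+k}{k}$, not $\binom{d-i+k}{k}$; these do \emph{not} match the coefficients in $\mathcal R_{i,t}$, so $\mathcal R_{i-1,t}(\beta,\zeta)=\mathcal R_{i,t}(\beta,h)$ is wrong for $t\neq 0$. (For instance at $i=2$ one gets $\mathcal R_{1,t}(\beta,\zeta)=\mathcal Q_2(\beta,h)+dt\,\mathcal Q_1(\beta,h)$ whereas $\mathcal R_{2,t}(\beta,h)=\mathcal Q_2(\beta,h)+(d-1)t\,\mathcal Q_1(\beta,h)$.)

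Fortunately you only use the $t=0$ case, and there the detour is unnecessary: since $\mathcal R_{j,0}=\mathcal Q_j$ by definition, one has directly
\[
\mathcal R'_{i,0}(\beta,\zeta)=(d-i+1)\mathcal Q_{i-1}(\beta,\zeta)=(d-i+1)\mathcal Q_i(\beta,h)=(d-i+1)\mathcal R_{i,0}(\beta,h),
\]
which is the paper's one-line proof. So your conclusion stands; just delete the general-$t$ computation (or fix the coefficients and note that the identity genuinely only holds at $t=0$).
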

\begin{proof}
This follows immediately from \eqref{eq:conditionINDforQ_i}.
\end{proof}

\begin{lemma}[A5]\label{lem:A5forQ}
$$\mathcal R_{i,0}(\zeta,h) >0 \text{ for } 1\le i\le d-1.$$
\end{lemma}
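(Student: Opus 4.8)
\textbf{Proof plan for Lemma \ref{lem:A5forQ}.}

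The plan is to unwind the definition $\cR_{i,0} = \cQ_i$ (recall that $\cR_{i,t} = \sum_{k=0}^i \binom{d-i+k}{k} t^k \cQ_{i-k}$, so at $t=0$ only the $k=0$ term survives) and then use the relation \eqref{eq:conditionINDforQ_i}, namely $\cQ_i(\beta,\zeta) = \cQ_{i+1}(\beta,h)$, to reduce the claim to a positivity statement about a single $\cQ$. Concretely, $\cR_{i,0}(\zeta,h) = \cQ_i(\zeta,h) = \cQ_i(h,\zeta) = \cQ_{i+1}(h,h) = \cQ_{i+1}(h)$ by symmetry and \eqref{eq:conditionINDforQ_i}. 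So the task is to show $\cQ_{i+1}(h) > 0$ for $1\le i\le d-1$, i.e. $\cQ_j(h) > 0$ for $2 \le j \le d$.

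But this is exactly the content of the first displayed inequality in Lemma \ref{lem:A1forQ} (equation \eqref{eq:thmfl}): there it is shown that $\cR_{j,0}(h) = \cQ_j(h) > 0$ for $2\le j\le d$, the argument being that by Remark \ref{rem:alternativeDef} one has $\cQ_j(h) = \int_X s_{\lambda}^{(d-j)}(\underline{\omega}) h^{d}$ (the $\beta = \beta' = h$ instance of the first line of \eqref{eq:alternativedefinitionQ_i}, with $h\in W$), that the derived Schur polynomial $s_{\lambda}^{(d-j)}$ is a nonnegative combination of monomials in the $\omega_k$, and that each such monomial integrated against a suitable power of the Kähler class $h$ is strictly positive (one such monomial is always genuinely present since $s_{\lambda}^{(d-j)}\ne 0$ in the relevant range, $0\le d-j\le|\lambda|=d-2$). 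Thus I would simply invoke Lemma \ref{lem:A1forQ} after the reindexing above.

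There is no real obstacle here; the only point requiring a moment's care is the bookkeeping of indices — checking that $1\le i\le d-1$ corresponds under $j = i+1$ precisely to the range $2\le j\le d$ on which \eqref{eq:thmfl} is available, and that the three-case formula \eqref{eq:alternativedefinitionQ_i} is being applied in the first case (both arguments in $W$), which is legitimate since $h\in W$. So the proof is a one-line reduction: write $\cR_{i,0}(\zeta,h) = \cQ_i(\zeta,h) = \cQ_{i+1}(h)$ using \eqref{eq:conditionINDforQ_i}, and conclude by \eqref{eq:thmfl} of Lemma \ref{lem:A1forQ}.
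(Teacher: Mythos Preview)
Your proposal is correct and follows exactly the same approach as the paper: the paper's proof is the one-line computation $\mathcal R_{i,0}(\zeta,h) = \mathcal Q_i(\zeta,h) = \mathcal Q_{i+1}(h,h) > 0$ for $1\le i\le d-1$, invoking \eqref{eq:conditionINDforQ_i} and Lemma~\ref{lem:A1forQ}, which is precisely what you wrote.
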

\begin{proof}
$$\mathcal R_{i,0}(\zeta,h)  = \mathcal Q_i(\zeta,h) = \mathcal Q_{i+1}(h,h)>0\text{ for } 1\le i\le d-1$$
by \eqref{eq:conditionINDforQ_i} and Lemma \ref{lem:A1forQ}.
\end{proof}

\begin{remark}
The reader may find it useful to observe that condition (A5) fails when $i=d$, and thus the need for the second augmentation result.
\end{remark}

\begin{lemma}[B4,B5]
$$\mathcal R_{d,0}''(\beta,\zeta) = 2 \mathcal R_{d,0}'(\beta,h)\text{ for all } \beta\in V \text{ and}  $$
$$\mathcal R_{d,0}''(\zeta,\zeta) = 2\mathcal R_{d,0}(h).$$
\end{lemma}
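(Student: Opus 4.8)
The plan is to derive both identities purely formally from the recursion \eqref{eq:R'} and the index-shift relation \eqref{eq:conditionINDforQ_i}, with no geometry involved. First I would differentiate \eqref{eq:R'} once more in $t$: since $\cR'_{i,t}=(d-i+1)\cR_{i-1,t}$ for all $i$ and all $t$, applying this twice gives $\cR''_{i,t}=(d-i+1)(d-i+2)\cR_{i-2,t}$. Specializing to $i=d$ and $t=0$, and recalling $\cR_{i,0}=\cQ_i$, the combinatorial factor collapses to $(d-d+1)(d-d+2)=2$, so that $\cR_{d,0}=\cQ_d$, $\cR'_{d,0}=\cQ_{d-1}$, and $\cR''_{d,0}=2\cQ_{d-2}$.

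With these three identifications in hand, the first assertion $\cR''_{d,0}(\beta,\zeta)=2\cR'_{d,0}(\beta,h)$ is equivalent to $\cQ_{d-2}(\beta,\zeta)=\cQ_{d-1}(\beta,h)$, which is exactly \eqref{eq:conditionINDforQ_i} applied with $i=d-2$ (and holds for all $\beta\in V$ as required). For the second assertion $\cR''_{d,0}(\zeta,\zeta)=2\cR_{d,0}(h)$, i.e. $\cQ_{d-2}(\zeta,\zeta)=\cQ_d(h,h)$, I would apply \eqref{eq:conditionINDforQ_i} twice: with $\beta=\zeta$ and $i=d-2$ it gives $\cQ_{d-2}(\zeta,\zeta)=\cQ_{d-1}(\zeta,h)$, and then using the symmetry of $\cQ_{d-1}$ together with \eqref{eq:conditionINDforQ_i} for $\beta=h$ and $i=d-1$ gives $\cQ_{d-1}(h,\zeta)=\cQ_d(h,h)$.

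There is no genuine obstacle in this lemma: it is a bookkeeping exercise whose only subtle points are that the prefactor $(d-i+1)(d-i+2)$ appearing in the second derivative of $\cR_{i,t}$ degenerates to exactly $2$ at $i=d$, and that one must keep track of which slot is shifted when combining \eqref{eq:conditionINDforQ_i} with the symmetry of the forms $\cQ_i$.
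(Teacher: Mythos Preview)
Your proposal is correct and follows essentially the same approach as the paper: both differentiate the recursion \eqref{eq:R'} once more to obtain $\cR''_{i,t}=(d-i+1)(d-i+2)\cR_{i-2,t}$, specialize to $i=d$, $t=0$, and then invoke the index-shift relation \eqref{eq:conditionINDforQ_i} once for the first identity and twice (using symmetry) for the second.
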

\begin{proof}
These follow straight from the definition.  Actually by differentiating in \eqref{eq:R'} we get
\begin{equation}
\cR''_{i,t}=(d-i+2)(d-i+1)\mathcal R_{i-2,t}
\end{equation}
which together with \eqref{eq:conditionINDforQ_i}  gives
$$\cR''_{d,0}(\beta, \zeta)=2\mathcal R_{d-2,0} (\beta, \zeta)= 2\mathcal Q_{d-2} (\beta, \zeta)=2\mathcal Q_{d-1} (\beta, h)=2\mathcal R'_{d-1} (\beta, h)$$
for all $\beta\in V$ and in the particular case when $\beta=\zeta$
$$\cR''_{d,0}(\zeta, \zeta)=2\mathcal Q_{d-1} (\zeta, h)=2\mathcal Q_{d} (h, h)=2\mathcal R_{d} ( h).$$
\end{proof}

With this we are ready to invoke our recursive argument.  Before doing this we check two more properties.

\begin{lemma}
$\mathcal R_{1,0}|_W=0$ and $\mathcal R_{2,0}|_W$ has the Hodge-Riemann property.
\end{lemma}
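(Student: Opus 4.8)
The plan is to compute both forms directly from the definitions in Definition \ref{def:definitionR_i} and Remark \ref{rem:alternativeDef}. For the first claim, recall $\mathcal R_{1,t} = \mathcal Q_1 + (d-1)t\,\mathcal Q_0$, so $\mathcal R_{1,0} = \mathcal Q_1$. By the first line of \eqref{eq:alternativedefinitionQ_i}, for $\beta,\beta'\in W$ we have $\mathcal Q_1(\beta,\beta') = \int_X \beta\, s_\lambda^{(d-1)}(\underline{\omega})\, h^{d-1}\beta'$. But $s_\lambda^{(j)}(\underline{\omega}) \in H^{|\lambda|-j,|\lambda|-j}_\R(X) = H^{d-2-j,d-2-j}_\R(X)$, which vanishes for $j > d-2$; since $d-1 > d-2$, we get $s_\lambda^{(d-1)}(\underline{\omega}) = 0$, hence $\mathcal R_{1,0}|_W = 0$. (Equivalently: Remark \ref{rem:alternativeDef} already records that the first formula gives non-zero values only for $2\le i\le d$.)

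For the second claim, $\mathcal R_{2,0} = \mathcal Q_2$, and for $\beta,\beta'\in W$ we have $\mathcal Q_2(\beta,\beta') = \int_X \beta\, s_\lambda^{(d-2)}(\underline{\omega})\, h^{d-2}\beta'$. Now $s_\lambda^{(d-2)}(\underline{\omega})$ is the top derived Schur polynomial, which is simply the coefficient of $x^{d-2}$ in $s_\lambda(\omega_1+x,\ldots,\omega_e+x)$; since $s_\lambda$ is homogeneous of degree $d-2$, this coefficient is a positive constant (indeed it equals $s_\lambda^{(d-2)} = \binom{d-2}{\lambda_1,\ldots}$-type combinatorial count, in any case a positive integer independent of $\underline\omega$). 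Therefore $\mathcal R_{2,0}|_W$ is a positive multiple of the bilinear form $(\beta,\beta')\mapsto \int_X \beta\beta' h^{d-2}$ on $W = H^{1,1}_\R(X)$. This is precisely the classical Hodge-Riemann form $Q_{h^{d-2}}$ attached to the ample (hence Kähler, since $X$ is a Kähler manifold) class $h$, which has the Hodge-Riemann property by the classical Hodge-Riemann bilinear relations recalled in the Introduction; moreover $Q_{h^{d-2}}(h) = \int_X h^d > 0$, so it has the Hodge-Riemann property with respect to $h$, and the same holds after scaling by a positive constant.

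I expect no serious obstacle here: both statements are essentially bookkeeping with the degree conventions, the only genuine input being the classical Hodge-Riemann relations for powers of an ample class. The one point to be careful about is to confirm that the constant $s_\lambda^{(d-2)}$ is strictly positive rather than merely non-negative — this follows because $s_\lambda(x,\ldots,x) = s_\lambda(1,\ldots,1)\,x^{d-2}$ with $s_\lambda(1,\ldots,1) > 0$ (the number of semistandard Young tableaux of shape $\lambda$ with entries in $\{1,\ldots,e\}$, which is positive precisely because $\lambda_1 \le e$), so that $s_\lambda^{(d-2)}(\underline\omega) = s_\lambda(1,\ldots,1) > 0$ as an element of $H^{0,0}_\R(X) = \R$.
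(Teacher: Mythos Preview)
Your proposal is correct and follows essentially the same approach as the paper: identify $\mathcal R_{1,0}=\mathcal Q_1$ and $\mathcal R_{2,0}=\mathcal Q_2$, observe the first vanishes on $W$ for degree reasons (as in Remark~\ref{rem:alternativeDef}), and reduce the second to a positive multiple of the classical intersection form $Q_{h^{d-2}}$, invoking the classical Hodge--Riemann relations. Your added justification that $s_\lambda^{(d-2)}(\underline{\omega})=s_\lambda(1,\ldots,1)>0$ via the SSYT count is a nice detail the paper leaves implicit; note a harmless slip in your coefficient for $\mathcal Q_0$ in $\mathcal R_{1,t}$ (it is $dt$, not $(d-1)t$), which is irrelevant at $t=0$.
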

\begin{proof}
We have $\mathcal R_{1,0}=\cQ_1$ and we have already seen that $\cQ_1|_W$ vanishes,  Remark \ref{rem:alternativeDef}.

Now for $\alpha,\alpha'\in W$ 
$$\mathcal R_{2,0}(\alpha,\alpha') = \mathcal Q_2(\alpha,\alpha') = \int_{X} \alpha s_{\lambda}^{(d-2)}(\underline{\omega})\alpha' h^{d-2}$$
and $s_{\lambda}^{(d-2)}(\underline{\omega})\in \mathbb R_{>0}$ is a positive constant.  So, up to this positive constant,  $\mathcal R_{2,0}|_W$ is the intersection form on $H^{1,1}_\mathbb R(X)$ given by intersecting with $h^{d-2}$ which has the Hodge-Riemann property by the classical Hodge-Riemann bilinear relations as $h$ is assumed to be ample.
\end{proof}

\subsection{Proof of Theorem \ref{thm:main}}\label{subsec:proofMain}

We apply the work in this section to $X=T$.  Since $T$ is assumed to have maximal Picard rank each $\omega_j$ can be approximated by rational K\"ahler classes so Proposition \ref{prop:Q'nefFnonpositive} applies.   We claim that:
\begin{enumerate}
\item $\mathcal R_{i,0}$ has the HR property over $V$ for $2\le i\le d-1$,
\item $\mathcal R_{d,0}|_W$ has the Hodge-Riemann property with respect to $h$.
\end{enumerate}
Since $\mathcal R_{d,0}(\alpha,\alpha') = \mathcal Q_{d}(\alpha,\alpha') = \int_X \alpha s_{\lambda}(\underline{\omega})\alpha$ for $\alpha,\alpha'\in H^{1,1}_\mathbb R(X)=W$, the second statement is precisely that $s_{\lambda}(\underline{\omega})$ has the Hodge-Riemann property, as claimed in Theorem \ref{thm:main}.

The facts established in this Section show on one hand that 
the hypotheses in Theorem \ref{thm:recursion} are fulfilled for the sequence $(\mathcal R_{i,t})_{1\le i\le d-1}$ and on the other hand that  the family  $\mathcal R_{d,t}$ satisfies conditions (B1-B5). 
Thus Claim (1) holds by Theorem \ref{thm:recursion}. The special case $i=d-2$ of Claim (1) tells us now that the form  $\mathcal R''_{d,0} = 2\mathcal R_{d-2,0}$ has the Hodge-Riemann property. Hence the hypotheses of Theorem \ref{thm:augmentation2} are fulfilled for the family  $\mathcal R_{d,t}$ and thus $\mathcal R_{d,0}|_W$ has the Hodge-Riemann property with respect to $h$, which is (2) and finishes the proof of Theorem \ref{thm:main}.

\subsection{Proof of the linear case of Theorem \ref{thm:sketch}}
\label{subsec:proofSketch}
Let $d\ge 2$, $e\ge 1$ be two integers and $\lambda=(\lambda_{1},\ldots,\lambda_{N})$ be a partition of $d-2$ with $0\le\lambda_{N}\le\ldots\le\lambda_{1}\le e$. Let further $E$ be a $d$-dimensional complex vector space. 
We want to show that if $\omega_{1},\ldots,\omega_{e}$ are strictly positive $(1,1)$-forms on $E$ then $s_{\lambda}(\omega_{1},\ldots,\omega_{e})$ has the Hodge-Riemann property, i.e.  the quadratic form 
$$Q_{s_{\lambda}(\omega_{1},\ldots,\omega_{e})}:\bigwedge^{1,1}_\R E^*\to\R, \ \alpha\mapsto\frac{\alpha^{2}\wedge s_{\lambda}(\omega_{1},\ldots,\omega_{e})}{\vol}$$ has signature $(1,h^{1,1}-1)$. For this we choose a $d$-dimensional  complex torus $T$ with maximal Picard number,  say $T=(\C/\Z[i])^d$, and endow it with the flat hermitian metric coming from the  euclidean metric on $\C^d$. Then for $p,q\in\Z$ the harmonic $(p,q)$-forms on $T$ are precisely the parallel $(p,q)$-forms on $T$. Thus if we identify $E$ to the holomorphic tangent space of $T$ at some point $x\in X$, we get by Hodge theory natural isomorphisms 
$$\bigwedge^{p,q}_\R E^*\cong H^{p,q}_\R(T)$$
and our claim directly follows from Theorem \ref{thm:main}.


\section{Pointwise to global argument and the K\"ahler case of Theorem \ref{thm:sketch}}\label{sec:pointwise_to_global}
In this section we show that in our set-up the Hodge-Riemann property in the linear case implies the Hodge-Riemann property in the K\"ahler case. The argument follows the same line as the corresponding one of Gromov in \cite{Gromov90} but we give it in detail as parts of it go over to the non-K\"ahler case.  We end the section with a discussion on the notion of balanced metrics of Hodge-Riemann type on compact complex manifolds, Remark \ref{rem:HRtype}, which was introduced by Chen and Wentworth in their  recent paper \cite{CW_HR}. As an application of our main result we show in Corollary \ref{cor:HRstructure} that the Schur forms $s_{\lambda}(\omega_{1},\ldots,\omega_{e})$ appearing in our Theorem \ref{thm:sketch} give rise to such metrics and thus provide further situations where the results of \cite{CW_HR} apply.

Let $E$ be a $d$-dimensional complex vector space.   Inside $\bigwedge^{p,p}_\R E^*$ lies the closed convex cone $SP^{p,p}$ of {\em strongly positive forms}, which is by definition the convex hull of the set 
$\{i\alpha_{1}\wedge\bar\alpha_{1}\wedge\ldots\wedge i \alpha_{p}\wedge\bar\alpha_{p} \ | \ \alpha_{j}\in E^{*}, \ j\in\{1,\ldots,p\}\}$ of {\em simple forms}. Its dual cone is the cone of {\em weakly postive forms} $WP^{d-p,d-p}:=\{\eta\in  \bigwedge^{d-p,d-p}_\R E^* \ | \ \eta\wedge\gamma\ge0 \ \forall \gamma\in SP^{p,p}\}$. One also defines the cone of {\em positive forms} as 
$$P^{p,p}:= \{\eta\in  \bigwedge^{p,p}_\R E^* \ | \ \eta\wedge i^{(d-p)^{2}}\beta\wedge\bar\beta\ge0 \ \forall \beta\in \bigwedge^{d-p,0} E^* \}.$$ 
It is shown in \cite[Corollary 1.3]{HarveyKnapp} that the dual cone of $P^{p,p}$ is $P^{d-p,d-p}$. 
One sees easily that $SP^{p,p}\subset P^{p,p} \subset WP^{p,p}$ and that the interior of $SP^{p,p}$ in $\bigwedge^{p,p}_\R E^*$ is non-empty; in fact $\bigwedge^{p,p}_\R E^*$ admits a basis formed of simple forms \cite[Chapter III Lemma 1.4]{DemaillyBook}. We will call a $(p,p)$-form {\em strictly positive, respectively strictly weakly positive or strictly strongly positive} if it lies in the interior of $P^{p,p}$, respectively of $WP^{p,p}$ or of $SP^{p,p}$. 
For $p=1$ and for $p=d-1$ we have $SP^{p,p}=P^{p,p}=WP^{p,p}$, \cite[Chapter III Corollary 1.9]{DemaillyBook}, and we will call forms lying in these cones simply {\em positive}. 
All this terminology extends to differential forms on complex manifolds. Directly from the definition we see that an exterior product of strongly positive forms is strongly positive and that an exterior product of weakly positive forms is weakly positive if all factors  but one are strongly positive.
\begin{definition}
To a real $(d-2,d-2)$-form $\Omega$ on $E$ (and a  volume form $\vol$) we associate an {\em intersection form} on $\bigwedge^{1,1}_\R E^*$ by the formula
$$
Q_\Omega: \bigwedge^{1,1}_\R E^*\times \bigwedge^{1,1}_\R E^*\to\R, \ (\alpha,\beta)\mapsto\frac{\alpha\wedge\Omega\wedge\beta}{\vol}.
$$
We will say that $\Omega$ has the {\em Hodge-Riemann property} if the bilinear form $Q_\Omega$ has the Hodge-Riemann property. This property does not depend on the choice of the volume form. 

A differential $(d-2,d-2)$-form $\Omega$ on a $d$-dimensional complex manifold $X$ will be said to have the  {\em Hodge-Riemann property pointwise} if for each point $x\in X$ the form $\Omega(x)$ has the   Hodge-Riemann property on $\bigwedge^{1,1}T^*_xX$.
\end{definition}

Let $X$ be a compact complex manifold of dimension $d\ge2$ and let $\Omega$ be a strictly weakly positive $(d-2,d-2)$-form on $X$ such that $i\partial\bar\partial\Omega=0$. Let $\cE^{p,q}$  be the sheaf of germs of smooth $(p,q)$-forms  on $X$. We will write $\cE^{p,p}_{\R}$ for the subsheaf of real forms.
We will be interested in the real Bott-Chern cohomolgy group of bidegree $(1,1)$ on $X$.  In terms of forms it is defined as $H^{1,1}_{BC}(X,\R):=\{\eta\in\cE_{\R}^{1,1}(X) \ | \ \d\eta=0\}/i\partial\bar\partial\cE_{\R}^{0,0}(X).$
The following {\em intersection form} on $H^{1,1}_{BC}(X)$
$$Q_{[\Omega]}([\alpha]_{BC},[\beta]_{BC}):=\int_{X}\alpha\wedge\beta\wedge\Omega $$ is well defined. 
(In the  notation $Q_{[\Omega]}$ the square brackets around $\Omega$ are meant to suggest that the intersection form only depends on the Aeppli cohomology class of $\Omega$, a fact which we will not need in this paper.)

Consider further a strictly positive $i\partial\bar\partial$-closed $(d-1,d-1)$-form $\eta$ on $X$. Note that any compact complex manifold of dimension $d$ admits such forms; they are the $d-1$ powers of Gauduchon forms. Then the map 
$$L_{[\eta]}: H^{1,1}_{BC}(X)\to \R, \ [\alpha]_{BC}\mapsto\int_{X}\alpha\wedge\eta$$ is well defined. 

\begin{remark}\label{rem:non-kaehler}
If $X$ is K\"ahler then $L_{[\eta]}$ is clearly positive on K\"ahler classes, so  $L_{[\eta]}\ne0$ in this case. 
More generally, $L_{[\eta]}$ is positive on classes of non-zero positive $\d$-closed currents of type $(1,1)$. Such currents always exist when $\dim X=2$  surfaces, cf. \cite{LamariJMPA}, but this need not be the case in  higher dimensions. In fact there are  examples of compact complex manifolds 
allowing strictly positive $\d$-exact $(d-1,d-1)$-forms $\eta$, \cite{Yachou}, and clearly for such $(d-1,d-1)$-forms the corresponding  linear forms $L_{[\eta]}$  vanish identically.
\end{remark}

The following Lemma is certainly well-known to experts, but we give a proof for completeness.
\begin{lemma}\label{lem:fredholm}  Let $\eta$ be a strictly positive $i\partial\bar\partial$-closed  $(d-1,d-1)$-form on $X$  and let $\vol$ be a volume form on $X$. Then the image of the differential operator on the space of smooth real functions 
$$P:\cC^\infty(X)\to\cC^\infty(X), \ f\mapsto\frac{ i\partial\bar\partial f\wedge\eta}{\vol}$$
is the space $\{ g\in \cC^\infty(X) \ | \ \int_Xg\vol=0\}.$
\end{lemma}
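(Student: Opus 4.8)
The plan is to treat $P$ as a second-order linear differential operator with smooth real coefficients and no zeroth-order term, and to run the Fredholm alternative. In local holomorphic coordinates $i\partial\bar\partial f=i\sum_{j,k}(\partial_{z_j}\partial_{\bar z_k}f)\,dz_j\wedge d\bar z_k$, so $Pf=\sum_{j,k}g^{j\bar k}\,\partial_{z_j}\partial_{\bar z_k}f$ for a smooth Hermitian-matrix-valued function $(g^{j\bar k})$ determined by $\eta$ and $\vol$, with no first- or zeroth-order terms. That $\eta$ is \emph{strictly} positive means, via the duality of \cite[Corollary 1.3]{HarveyKnapp} between $P^{1,1}=WP^{1,1}$ and $P^{d-1,d-1}$, that $\alpha\wedge\eta/\vol>0$ for every nonzero semipositive real $(1,1)$-form $\alpha$; testing against the rank-one forms $i\,\xi\wedge\bar\xi$ shows $(g^{j\bar k})$ is positive definite at every point. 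Hence the principal symbol of $P$ is, up to sign, that of the complex Laplacian of a Hermitian metric, so $P$ is elliptic, and being an even-order elliptic operator on functions over the compact manifold $X$ (its symbol homotopic through elliptic symbols to that of a Laplacian), it is Fredholm of index zero on suitable Sobolev completions.

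Next I would identify the kernel and the image. Since $P$ has no zeroth-order term, the strong maximum principle on the compact connected manifold $X$ forces $Pf=0\Rightarrow f$ constant, so $\dim\ker P=1$. For the image, two applications of Stokes' theorem give, for every $f\in\cC^\infty(X)$,
$$\int_X i\partial\bar\partial f\wedge\eta= i\int_X\bar\partial f\wedge\partial\eta=-\,i\int_X f\,\bar\partial\partial\eta= i\int_X f\,\partial\bar\partial\eta=0,$$
where the first two identities use the bidegree vanishings $\bar\partial f\wedge\bar\partial\eta=0$ and $\partial f\wedge\partial\eta=0$ and the last uses the hypothesis $i\partial\bar\partial\eta=0$. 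Thus $\int_X Pf\,\vol=0$ for all $f$, i.e.\ $\operatorname{im}P\subseteq H:=\{g\in\cC^\infty(X):\int_X g\,\vol=0\}$, and taking the $L^2(\vol)$-pairing this also says $P^{\ast}1=0$ for the formal adjoint $P^\ast$.

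Finally I would combine these facts: index zero gives $\dim\ker P^\ast=\dim\ker P=1$, so $\ker P^\ast=\R\cdot 1$; by elliptic regularity $\operatorname{im}P$ is closed in $\cC^\infty(X)$ and equals the $L^2(\vol)$-orthogonal complement of $\ker P^\ast$, which is exactly $H$, proving the lemma. I expect the only real work to be bookkeeping: correctly extracting positive-definiteness of $(g^{j\bar k})$ from strict positivity of $\eta$, and carrying out the integration-by-parts chain while tracking which wedge products vanish for bidegree reasons — this is the one and only place where $i\partial\bar\partial\eta=0$ enters, and it is precisely what makes $\operatorname{im}P$ a proper, codimension-one subspace rather than all of $\cC^\infty(X)$. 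The sole external input is the standard fact that an even-order real elliptic operator on functions over a compact connected manifold is Fredholm of index zero.
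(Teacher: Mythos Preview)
Your proof is correct and follows essentially the same strategy as the paper: establish ellipticity of $P$, invoke the Fredholm alternative to get $\cC^\infty(X)=\operatorname{im}P\oplus\ker P^{*}$, and use the maximum principle to pin down the kernel side as the constants. The only organizational difference is that the paper computes $P^{*}$ explicitly and applies Hopf's maximum principle directly to $P^{*}$ (which has first-order but no zeroth-order terms), whereas you apply the maximum principle to $P$ itself, use index zero to get $\dim\ker P^{*}=1$, and then identify $\ker P^{*}=\R\cdot 1$ via your integration-by-parts computation showing $P^{*}1=0$; this is a harmless and arguably slightly cleaner variant, since it spares you the explicit formula for $P^{*}$.
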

\begin{proof} A direct computation shows that the symbol of the operator $P$ at a point $x\in X$ and a real cotangent vector in $T_{x}^{*}X$ projecting to  $\xi\in (T_{x}^{1,0}X)^{*}$ is $\frac{i\xi\wedge\bar\xi\wedge\eta(x)}{\vol(x)}\ne 0$, hence $P$ is an elliptic operator, \cite[Chapter VI Section 1]{DemaillyBook}.
 
On $\cC^\infty(X)$ we consider the scalar product
$$(f,g)\mapsto\langle f,g\rangle:=\int_{X}fg\vol$$
with respect to which the formal adjoint of $P$ is  
$$P^{*}(g):=\frac{ i\partial\bar\partial g\wedge\eta-i\bar\partial g\wedge\partial\eta+i\partial g\wedge\bar\partial\eta}{\vol}.$$
By the general theory of elliptic differential operators we know that the range of $P$ is closed and that there is an orthogonal direct sum decomposition $\cC^\infty(X)=P(\cC^\infty(X))\oplus\Ker P^{*}$, \cite[Chapter VI Corollary 2.4]{DemaillyBook}. Since $\eta$ is strictly positive we may apply the maximum principle of E. Hopf \cite[Chapter III theorem 1.10]{KobayashiVectorBundles} and obtain that $\Ker P^{*}$ consists of the constant functions alone. 
From this the Lemma follows.
\end{proof}

\begin{corollary}\label{cor:tofredholm}  Let $\eta$ be a strictly positive $i\partial\bar\partial$-closed  $(n-1,n-1)$-form and $\alpha$ be a $\d$-closed $(1,1)$-form on $X$ such that $\int_{X}\alpha\wedge\eta=0$. Then there
exists a smooth representative $\tilde\alpha$ in the Bott-Chern cohomology class $[\alpha]_{BC}$ of $\alpha$ such that
$$\tilde\alpha\wedge\eta=0.$$
\end{corollary}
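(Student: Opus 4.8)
\textbf{Proof plan for Corollary \ref{cor:tofredholm}.}

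The plan is to reduce the statement directly to Lemma \ref{lem:fredholm}. Fix a volume form $\vol$ on $X$ and consider the smooth real function
\[
g := \frac{\alpha\wedge\eta}{\vol} \in \cC^\infty(X).
\]
Strictly speaking $\alpha$ is an $(1,1)$-form, so $\alpha\wedge\eta$ is a real top-degree form and $g$ is indeed a real-valued smooth function (after replacing $\alpha$ by its real part if necessary, or noting that we may assume $\alpha$ real since the Bott-Chern class is real). By hypothesis $\int_X \alpha\wedge\eta = 0$, i.e. $\int_X g\,\vol = 0$, so $g$ lies in the image of the operator $P$ from Lemma \ref{lem:fredholm}. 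Hence there exists $f\in\cC^\infty(X)$ with $P(f)=g$, that is
\[
\frac{i\partial\bar\partial f\wedge\eta}{\vol} = \frac{\alpha\wedge\eta}{\vol},
\]
and therefore $(\alpha - i\partial\bar\partial f)\wedge\eta = 0$ as top forms. Setting $\tilde\alpha := \alpha - i\partial\bar\partial f$ gives a smooth $(1,1)$-form which is $\d$-closed (since $\alpha$ is $\d$-closed and $\d(i\partial\bar\partial f)=0$) and which differs from $\alpha$ by $i\partial\bar\partial$ of a function, hence represents the same Bott-Chern class $[\alpha]_{BC}$; moreover $\tilde\alpha\wedge\eta = 0$ by construction.

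The only point requiring a word of care is the reality of $\tilde\alpha$ (so that it is a legitimate representative in $H^{1,1}_{BC}(X,\R)$): one checks that since $\alpha$ is real and $\vol$ is real, the function $g$ above is real, so the real-valued solution $f$ produced by Lemma \ref{lem:fredholm} yields a real form $i\partial\bar\partial f$ and hence a real $\tilde\alpha$. There is no serious obstacle here — the corollary is essentially a restatement of the surjectivity of $P$ onto the mean-zero functions, and the content is entirely in Lemma \ref{lem:fredholm} which has already been proved.
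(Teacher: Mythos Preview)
Your proof is correct and follows essentially the same approach as the paper: both reduce immediately to Lemma \ref{lem:fredholm} by solving $i\partial\bar\partial f\wedge\eta = \pm\alpha\wedge\eta$ and setting $\tilde\alpha = \alpha \mp i\partial\bar\partial f$. Your additional remarks on reality are fine but not strictly needed, since Lemma \ref{lem:fredholm} is already stated for real functions.
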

\begin{proof}
Clearly it suffices to find a smooth function $f$ on $X$ such that $i\partial\bar\partial f\wedge\eta=-\alpha\wedge\eta$. By Lemma \ref{lem:fredholm} for any top degree form $\sigma$ on $X$ with $\int_X\sigma=0$ a smooth solution $f$  to the equation $$i\partial\bar\partial f\wedge\eta=\sigma$$ exists and we are done. 
\end{proof}

Take $\omega_{0}$ a strictly positive $(1,1)$-form on $X$ such that $\eta:=\Omega\wedge\omega_{0}$ is $\partial\bar\partial$-closed. Note that, by Gauduchon's \cite[Th\'eor\`eme I 14]{Gauduchon84} again, for any  strictly positive $(1,1)$-form $\omega$  on $X$ a positive function $f$ exists such that  $i\partial\bar\partial(f\omega\wedge \Omega)=0$.

\begin{proposition}\label{prop:pointwisetoglobal}
If $\Omega$ has the Hodge-Riemann property pointwise and $\omega_{0}$ is a strictly positive $(1,1)$-form on $X$ such that $\Omega\wedge\omega_{0}$ is $\partial\bar\partial$-closed, then $Q_{[\Omega]}$ is negative definite on the subspace $\Ker L_{[\Omega\wedge\omega_{0}]}$ of $H^{1,1}_{BC}(X)$. In particular,  $Q_{[\Omega]}$ has the Hodge-Riemann property when $X$ is K\"ahler.
\end{proposition}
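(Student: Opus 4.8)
The plan is to follow Gromov's pointwise-to-global scheme: reduce the assertion about $Q_{[\Omega]}$ on $H^{1,1}_{BC}(X)$ to the pointwise Hodge-Riemann property of $\Omega$ by choosing, in each relevant Bott-Chern class, a representative adapted to $\eta:=\Omega\wedge\omega_0$. First I would fix a class $[\alpha]_{BC}\in\Ker L_{[\eta]}$, that is, a $\d$-closed $(1,1)$-form $\alpha$ with $\int_X\alpha\wedge\eta=0$. Since $\eta$ is a strictly positive $i\partial\bar\partial$-closed $(d-1,d-1)$-form, Corollary \ref{cor:tofredholm} provides a smooth representative $\tilde\alpha$ of $[\alpha]_{BC}$ such that $\tilde\alpha\wedge\eta=0$, i.e.\ $\tilde\alpha(x)\wedge\Omega(x)\wedge\omega_0(x)=0$ for every $x\in X$.

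Next comes the pointwise linear algebra. Fix $x\in X$ and consider the bilinear form $Q_{\Omega(x)}$ on $\bigwedge^{1,1}_\R T^*_xX$, which has the Hodge-Riemann property by hypothesis. I would first check that $\omega_0(x)$ is a positive vector for it, namely $Q_{\Omega(x)}(\omega_0(x))=\frac{\Omega(x)\wedge\omega_0(x)^2}{\vol(x)}>0$: indeed $\omega_0(x)^2$ is a nonzero strongly positive $(2,2)$-form, while $\Omega(x)$, lying in the interior of $WP^{d-2,d-2}$, pairs strictly positively with every nonzero element of the dual cone $SP^{2,2}$ (the cases $d\le 3$ being immediate). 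The identity $\tilde\alpha(x)\wedge\Omega(x)\wedge\omega_0(x)=0$ translates into $Q_{\Omega(x)}(\tilde\alpha(x),\omega_0(x))=0$, so $\tilde\alpha(x)$ belongs to the primitive subspace of $Q_{\Omega(x)}$ determined by $\omega_0(x)$. By Definition-Lemma \ref{deflemma:HRofquadraticform}(3) that subspace is negative definite, hence $Q_{\Omega(x)}(\tilde\alpha(x))\le 0$, with equality if and only if $\tilde\alpha(x)=0$.

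I would then globalize by integrating: since at each point $\tilde\alpha\wedge\tilde\alpha\wedge\Omega=Q_{\Omega(x)}(\tilde\alpha(x))\,\vol$, we obtain $Q_{[\Omega]}([\alpha]_{BC})=\int_X\tilde\alpha\wedge\tilde\alpha\wedge\Omega\le 0$, and equality forces $Q_{\Omega(x)}(\tilde\alpha(x))\equiv 0$, hence $\tilde\alpha\equiv 0$, hence $[\alpha]_{BC}=0$. This proves that $Q_{[\Omega]}$ is negative definite on $\Ker L_{[\eta]}$. For the final assertion, when $X$ is K\"ahler we have $H^{1,1}_{BC}(X)\cong H^{1,1}_\R(X)$ and, by Remark \ref{rem:non-kaehler}, $L_{[\eta]}$ is nonzero, so $\Ker L_{[\eta]}$ has codimension one; as any K\"ahler class $h$ satisfies $Q_{[\Omega]}(h)=\int_Xh^2\wedge\Omega>0$, Definition-Lemma \ref{deflemma:HRofquadraticform}(2) then shows that $Q_{[\Omega]}$ has the Hodge-Riemann property.

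The one genuinely global ingredient is the production of the adapted representative $\tilde\alpha$, which has already been isolated in Corollary \ref{cor:tofredholm} (resting on ellipticity of $f\mapsto i\partial\bar\partial f\wedge\eta$ and the Hopf maximum principle); granting that, the remaining argument is purely pointwise. The only steps requiring a little care are verifying that $\omega_0(x)$ is a positive vector for $Q_{\Omega(x)}$, so that the primitive subspace cut out by $\omega_0(x)$ is the large negative-definite one rather than an indefinite one, and tracking the equality case closely enough to get definiteness of $Q_{[\Omega]}$ on $\Ker L_{[\eta]}$ rather than mere semidefiniteness.
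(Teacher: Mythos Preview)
Your proof is correct and follows essentially the same approach as the paper: choose an adapted representative via Corollary~\ref{cor:tofredholm}, apply the pointwise Hodge-Riemann property with respect to $\omega_0(x)$ to get $\tilde\alpha^2\wedge\Omega\le 0$ pointwise with equality only when $\tilde\alpha(x)=0$, integrate, and finish the K\"ahler case by exhibiting a positive vector. The paper's version is terser, leaving implicit the verification that $Q_{\Omega(x)}(\omega_0(x))>0$ and the appeal to Definition-Lemma~\ref{deflemma:HRofquadraticform}; your added justifications are all correct.
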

\begin{proof} Let 
$\alpha$ be a $\d$-closed $(1,1)$-form on $X$. By Corollary  \ref{cor:tofredholm} applied to $\eta:=\Omega\wedge\omega_{0}$ if the class $[\alpha]_{BC}$ belongs to $\Ker L_{[\Omega\wedge\omega_{0}]}$, there
exists a smooth representative $\tilde\alpha$ in $[\alpha]_{BC}$ such that
$$\tilde\alpha\wedge\Omega\wedge\omega_{0}=0.$$ Since $\omega_{0}$ is strictly positive it follows by the pointwise Hodge-Riemann property of $\Omega$ that $\tilde\alpha\wedge\tilde\alpha\wedge\Omega\le 0$ with equality if and only if $\tilde\alpha$ vanishes identically. Thus 
$$Q_{[\Omega]}([\alpha]_{BC},[\alpha]_{BC})=\int_{X}\tilde\alpha\wedge\tilde\alpha\wedge\Omega\le 0 $$
and equality holds if and only if 
$[\alpha]_{BC}=0$. Hence $Q$ is negative definite on $\Ker L_{[\Omega\wedge\omega_{0}]}$. If moreover $X$ is K\"ahler, $Q_{[\Omega]}$ will be positive on K\"ahler classes, hence $Q_{[\Omega]}$ will  have the Hodge-Riemann property as stated.
\end{proof}

Using this and the already established linear case of Theorem \ref{thm:sketch} we get the following Corollary which finishes the proof of Theorem \ref{thm:sketch}. 

\begin{corollary}\label{cor:KaehlerCase} The second statement of Theorem \ref{thm:sketch}, i.e. the K\"ahler case of that theorem, holds. 
\end{corollary}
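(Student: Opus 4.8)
The plan is to combine the linear case of Theorem \ref{thm:sketch} (proved in Subsection \ref{subsec:proofSketch}) with the pointwise-to-global machinery just developed, specialized to the K\"ahler setting. So suppose $X$ is a compact K\"ahler manifold of dimension $d$ and $\omega_1,\ldots,\omega_e$ are K\"ahler forms on $X$. First I would set $\Omega := s_\lambda(\omega_1,\ldots,\omega_e)$, which is a smooth real $(d-2,d-2)$-form on $X$. The key point is that at each point $x\in X$, the fibre $T_xX$ is a $d$-dimensional complex vector space and the $\omega_j(x)$ are strictly positive $(1,1)$-forms on it, so the linear case of Theorem \ref{thm:sketch}(1) applies and tells us that $\Omega(x) = s_\lambda(\omega_1(x),\ldots,\omega_e(x))$ has the Hodge-Riemann property on $\bigwedge^{1,1}_\R T_x^*X$. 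Hence $\Omega$ has the Hodge-Riemann property pointwise in the sense of the definition above.

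Next I would check that $\Omega$ is strictly weakly positive, so that it fits the hypotheses of Proposition \ref{prop:pointwisetoglobal}. Indeed the pointwise Hodge-Riemann property already forces $\Omega(x)$ to pair strictly positively with $\omega(x)^{d-2}$ for any K\"ahler $\omega$ (since $Q_{\Omega(x)}$ must have a positive direction, and by the signature it is $Q_{\Omega(x)}(\omega(x))>0$, i.e.\ $\int \omega^{d-2}\wedge\Omega$-type positivity pointwise); alternatively, monomial positivity of the Schur polynomial expresses $\Omega$ as a positive combination of wedge products of K\"ahler forms, each of which is strongly positive, hence $\Omega$ is strictly (strongly, a fortiori weakly) positive. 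Either way the weak positivity needed in Proposition \ref{prop:pointwisetoglobal} holds.

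Then I would invoke Proposition \ref{prop:pointwisetoglobal}: choose, via Gauduchon's theorem (as noted just before that proposition), a strictly positive $(1,1)$-form $\omega_0$ on $X$ with $\Omega\wedge\omega_0$ being $\partial\bar\partial$-closed. The proposition then gives directly that $Q_{[\Omega]}$ on $H^{1,1}_{BC}(X)$ is negative definite on $\Ker L_{[\Omega\wedge\omega_0]}$, and in particular, since $X$ is K\"ahler, that $Q_{[\Omega]}$ has the Hodge-Riemann property. The final translation step is to pass from Bott-Chern cohomology back to the Dolbeault group $H^{1,1}_\R(X)$ appearing in the statement of Theorem \ref{thm:sketch}(2): when $X$ is K\"ahler the natural map $H^{1,1}_{BC}(X)\to H^{1,1}_\R(X)$ is an isomorphism and is compatible with the two intersection forms (both given by $\int_X\alpha\wedge\beta\wedge\Omega$), so the Hodge-Riemann property transports, and $s_\lambda(\{\omega_1\},\ldots,\{\omega_e\})$ has the Hodge-Riemann property as claimed.

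The main obstacle, such as it is, is not conceptual but bookkeeping: one must be careful that the pointwise statement really is the linear statement applied fibrewise (this is immediate once one fixes a local holomorphic frame, since the Schur polynomial construction commutes with restriction to a point) and that the positivity/closedness hypotheses of Proposition \ref{prop:pointwisetoglobal} are genuinely met — in particular that $i\partial\bar\partial\Omega=0$ is not actually required for the conclusion used here (only for the definition of $Q_{[\Omega]}$ via $\partial\bar\partial$-closedness of $\Omega\wedge\omega_0$, which Gauduchon supplies). All the real work has already been done, so this corollary is essentially a matter of assembling the pieces.
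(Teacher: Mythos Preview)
Your approach is essentially the paper's own: apply the linear case fibrewise to get the pointwise Hodge--Riemann property, then invoke Proposition~\ref{prop:pointwisetoglobal}. The substance is correct.

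One remark on your final paragraph: the condition $i\partial\bar\partial\Omega=0$ \emph{is} genuinely needed---it is what makes $Q_{[\Omega]}$ well defined on Bott--Chern classes (if $\alpha' = \alpha + i\partial\bar\partial f$ then $\int_X(\alpha'-\alpha)\wedge\beta\wedge\Omega = \int_X f\,\beta\wedge i\partial\bar\partial\Omega$ for $\beta$ closed). But there is no need to sidestep it: since each $\omega_j$ is K\"ahler and hence $d$-closed, $\Omega = s_\lambda(\omega_1,\ldots,\omega_e)$ is a polynomial in closed forms and is therefore itself $d$-closed, so $i\partial\bar\partial\Omega=0$ holds automatically. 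For the same reason you may simply take $\omega_0$ to be any K\"ahler form (then $\Omega\wedge\omega_0$ is closed), and Gauduchon's theorem is not needed here.
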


\begin{example}[The Case of Surfaces]\label{ex:surfaces}
For $d=2$ the positive form $\Omega$ will be just a positive function. In this case it automatically has the Hodge-Riemann property  pointwise.   Since it is required to be also $\partial\bar\partial$-closed, $\Omega$ has to be a constant function. We will  assume that this constant is $1$. Then the corresponding intersection form on $H^{1,1}_{BC}(X)$  is $Q([\alpha]_{BC},[\beta]_{BC}):=\int_{X}\alpha\wedge\beta$. Consider now a Gauduchon form $\omega$ on $X$. 
By Proposition \ref{prop:pointwisetoglobal} $Q$ is negative definite on $\Ker L_{[\omega]}$. As remarked before we have $\Ker L_{[\omega]}\varsubsetneq H^{1,1}_{BC}(X)$ in the surface case. Two cases may occur:
\begin{enumerate}
\item If $X$ is K\"ahler we have seen that $Q$ has the Hodge-Riemann property and we thus recover the Hodge Index Theorem in this case.
\item If $X$ is non-K\"ahler and if $\alpha$ is a smooth representative of a non-zero $\d$-exact positive $(1,1)$-current, then $\alpha$ is $\d$-exact,  $L_{[\omega]}([\alpha]_{BC})>0$ and $Q([\alpha]_{BC}, [\beta]_{BC})=0$ for all $[\beta]_{BC}\in H^{1,1}_{BC}(X)$. Thus $Q$ is degenerate semi-definite  in this case.
\end{enumerate} 
\end{example}
Following \cite{CW_HR}
we introduce the following

\begin{definition}
\label{HRstructure} We say that a pair of strictly  weakly positive forms $(\Omega,\omega)$ of types $(d-2,d-2)$ and $(1,1)$ respectively on a $d$-dimensional complex manifold $X$ defines a   {\em Hodge-Riemann structure} on $X$ if $\Omega$ has the Hodge-Riemann property pointwise and 
is $\partial\bar\partial$-closed.
\end{definition}

\begin{remark}\label{rem:HRtype}

As mentioned before if $(\Omega,\omega)$ defines a Hodge-Riemann structure on the compact complex manifold $X$, there exists a unique conformal rescaling of $\omega$ to $\omega_{0}:=f\omega$ such that $i\partial\bar\partial(\omega_{0}\wedge\Omega)=0$. We denote the $d-1$-root of $\omega_{0}\wedge\Omega$ by $\omega'$. This is a Gauduchon form on $X$. Then the hermitian metric $\omega'$ is {\em balanced of Hodge-Riemann type} in the sense of \cite[Definition 2.7]{CW_HR} if  $\Omega$ and $\omega_{0}\wedge\Omega$ are moreover $\d$-closed and $\Omega$ is strictly positive. 
Note that the requirement that $\Omega$ be strictly positive is equivalent to the condition  appearing in \cite[Definition 2.7 (2)]{CW_HR} that $\Omega$ be a {\em Hodge-Riemann form for $(2,0)$}. Thus balanced hermitian metrics of Hodge-Riemann type in the sense of \cite[Definition 2.7]{CW_HR}  are particular cases of Hodge-Riemann structures on compact complex manifolds in our sense. It is worthwhile noticing that the results of \cite[Section 3]{CW_HR} on polystable vector bundles are more generally valid over compact complex manifolds carrying Hodge-Riemann structures. (In order to see this it suffices to apply \cite[Lemma 2.1.5]{LuTe} in the proof of \cite[Theorem 3.2]{CW_HR}.)   
\end{remark}

\begin{example}\label{ex:astheno} A hermitian metric (which we identify with its fundamental $(1,1)$-form) $\omega$ on a $d$-dimensional compact complex manifold $X$ is called {\em astheno-K\"ahler} if $i\partial\bar\partial\omega^{d-2}=0$. This notion was introduced by Jost and Yau in \cite{JostYau}. Examples of non-K\"ahler astheno-K\"ahler metrics have been constructed for all $d\ge 3$, cf. \cite{FinoTomassini},  \cite{LatorreUgarte}, . If $\omega$ is an astheno-K\"ahler metric on $X$ then clearly $(\omega^{d-2},\omega)$ defines a Hodge-Riemann structure on $X$. 
\end{example}

The main example of Hodge-Riemann structures provided by our paper occurs however in the K\"ahler set-up and is the content of the following corollary to our main theorem.

\begin{corollary}\label{cor:HRstructure} Let $\omega_{1},\ldots,\omega_{e}$ be K\"ahler forms on a compact complex manifold of dimension $d\ge 3$, let $\omega$ be an arbitrary strictly positive $(1,1)$-form on $X$ and let $\lambda=(\lambda_{1},\ldots,\lambda_{N})$ be a partition of $d-2$ with $0\le\lambda_{N}\le\ldots\le\lambda_{1}\le e$. . Then $$(s_{\lambda}(\omega_{1},\ldots,\omega_{e}),\omega)$$ defines a Hodge-Riemann structure on $X$ and the quadratic form $[\alpha]_{BC}\mapsto\int_{X}\alpha^{2}\wedge s_{\lambda}(\omega_{1},\ldots,\omega_{e})$ has the Hodge-Riemann property on $H^{1,1}_{BC}(X)$. If $\omega$ is moreover closed then $(s_{\lambda}(\omega_{1},\ldots,\omega_{e}),\omega)$ induces a balanced hermitian metric of Hodge-Riemann type on $X$ as in Remark \ref{rem:HRtype}.
\end{corollary}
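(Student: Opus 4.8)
The plan is to verify the three assertions in turn, each reducing to results already established in the paper together with standard positivity facts about forms; throughout put $\Omega := s_{\lambda}(\omega_1,\ldots,\omega_e)$. First, to see that $(\Omega,\omega)$ defines a Hodge-Riemann structure in the sense of Definition \ref{HRstructure} I would check the clauses separately. That $\Omega$ is $\partial\bar\partial$-closed is immediate: each $\omega_j$ is Kähler, hence $d$-closed, so $\Omega$, being a polynomial in wedge products of the $\omega_j$, is $d$-closed, and in particular $i\partial\bar\partial\Omega=0$. That $\Omega$ has the Hodge-Riemann property pointwise is precisely the already established linear case: for $x\in X$, identifying $T_xX$ with a $d$-dimensional complex vector space makes $\omega_1(x),\ldots,\omega_e(x)$ into strictly positive $(1,1)$-forms, so by Theorem \ref{thm:sketch}(1) the form $\Omega(x)=s_{\lambda}(\omega_1(x),\ldots,\omega_e(x))$ has the Hodge-Riemann property on $\bigwedge^{1,1}T^*_xX$. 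That $\omega$ is strictly weakly positive holds because it is strictly positive of bidegree $(1,1)$ and $WP^{1,1}=P^{1,1}$. Finally, that $\Omega$ is strictly weakly positive I would deduce from the stronger assertion that $\Omega$ lies in the interior of the strongly positive cone $SP^{d-2,d-2}$: expanding $s_{\lambda}$ in monomials exhibits $\Omega$ as a non-negative linear combination of $(d-2)$-fold wedge products $\omega_{i_1}\wedge\cdots\wedge\omega_{i_{d-2}}$ (recall $d-2\le d$), not all coefficients zero since $\Omega\not\equiv 0$ by the pointwise Hodge-Riemann property just established; each such wedge product of positive definite $(1,1)$-forms lies in the interior of $SP^{d-2,d-2}$, and the interior of a convex cone is stable under adding elements of the cone, so $\Omega\in\mathrm{int}(SP^{d-2,d-2})$, hence also $\Omega\in\mathrm{int}(P^{d-2,d-2})\subseteq\mathrm{int}(WP^{d-2,d-2})$ because $SP^{d-2,d-2}\subseteq P^{d-2,d-2}\subseteq WP^{d-2,d-2}$.

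Second, the Hodge-Riemann property of the form $[\alpha]_{BC}\mapsto\int_X\alpha^2\wedge\Omega$ on $H^{1,1}_{BC}(X)$ I would obtain by invoking Proposition \ref{prop:pointwisetoglobal}: its standing requirements ($\Omega$ strictly weakly positive with $i\partial\bar\partial\Omega=0$) and its hypothesis that $\Omega$ have the pointwise Hodge-Riemann property have just been verified, while the auxiliary strictly positive $(1,1)$-form $\omega_0$ with $\Omega\wedge\omega_0$ being $\partial\bar\partial$-closed is provided by Gauduchon's theorem applied to $\omega$, exactly as recorded immediately before Proposition \ref{prop:pointwisetoglobal} (take $\omega_0=f\omega$ for a suitable positive function $f$). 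Since $X$ carries the Kähler forms $\omega_j$ it is a Kähler manifold, so the last sentence of Proposition \ref{prop:pointwisetoglobal} gives that $Q_{[\Omega]}$ has the Hodge-Riemann property on $H^{1,1}_{BC}(X)$.

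Third, when $\omega$ is closed I would note that $\omega\wedge\Omega$ is then $d$-closed (both factors being so), hence already $i\partial\bar\partial$-closed, so the conformal rescaling $\omega_0$ appearing in Remark \ref{rem:HRtype} can be taken to be $\omega$ itself. By that Remark, the claim that $\omega':=(\omega\wedge\Omega)^{1/(d-1)}$ is a balanced Hermitian metric of Hodge-Riemann type then comes down to $\Omega$ and $\omega_0\wedge\Omega=\omega\wedge\Omega$ being $d$-closed --- true, as just noted --- and $\Omega$ being strictly positive, which is the membership $\Omega\in\mathrm{int}(P^{d-2,d-2})$ recorded in the first step. This completes the proof.

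The only point that is not either formal or a direct citation of an earlier result is the positivity of the Schur form $\Omega$: one needs its strict weak positivity (for the Hodge-Riemann structure and for Proposition \ref{prop:pointwisetoglobal}) and its strict positivity (for the refinement in Remark \ref{rem:HRtype}). I expect this to be the one place requiring a short argument, and I would isolate the statement used --- that a wedge product of at most $d$ positive definite $(1,1)$-forms on a $d$-dimensional complex vector space lies in the interior of $SP^{p,p}$ --- citing \cite{DemaillyBook} for the cone inclusions $SP^{p,p}\subseteq P^{p,p}\subseteq WP^{p,p}$ and for the non-emptiness and description of the relevant interiors, and combining it with the monomial-positivity of Schur polynomials.
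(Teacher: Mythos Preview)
Your proof is correct and is precisely the argument the paper has in mind: the corollary is stated there without proof, as an immediate consequence of the linear case of Theorem~\ref{thm:sketch} combined with Proposition~\ref{prop:pointwisetoglobal} and the definitions in Remark~\ref{rem:HRtype}, and you have filled in exactly those details. You are also right to isolate the strict (strong) positivity of $s_\lambda(\omega_1,\ldots,\omega_e)$ as the one point not entirely formal; your reduction via monomial positivity to the claim that a product of strictly positive $(1,1)$-forms lies in the interior of $SP^{p,p}$ is correct, and that claim follows by the standard multilinearity argument (vanishing of $\omega_1\wedge\cdots\wedge\omega_p\wedge\eta$ at an interior point of the positive cone forces it to vanish for all $(1,1)$-forms, hence $\eta=0$).
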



\section{Convex combinations of Schur classes} \label{sec:convex}
In this section we show that the device we developed to prove our main result also applies to prove the Hodge-Riemann property for certain convex combinations of Schur classes. The main criterion to decide for which convex combinations this happens, Theorem \ref{thm:convex}, is given in terms of irreducibility of certain cycles on appropriate Grassmann varieties.

To describe this, fix 
 integers $b\ge 1$, $e\ge 1$ and $d\ge b$. 
 We will consider the $(k-1)$-simplex $$\Sigma_{k-1}:=\{ x\in(\R_{\ge0})^k  \ | \ \sum_{i=1}^k x_i=1 \},$$ where $k:=k(b,e)$ is the number of partitions   $\lambda=(\lambda_{1},\ldots,\lambda_{n})$ of $b$ with $e\ge\lambda_{1}\ge \ldots\ge\lambda_{n}> 0$. 
 We will order these partitions in some arbitrary but fixed way as $\lambda^{(1)},\ldots,\lambda^{(k)}$.  
Then when $b=d-2$ we will be interested in finding conditions under which convex combinations of Schur classes $s_{\lambda^{(i)}}$, $1\le i\le k$, of ample vector bundles have the Hodge-Riemann property. (We exclude the case when $d=2$ and $b=0$  since it is trivial.)

We next introduce two definitions, one concerning linear sums of Schur polynomials and then one concerning linear sums of Schubert varieties. For $x\in  \Sigma_{k-1}$ define $$\Gamma_x:=\sum_{i=1}^k x_i s_{\lambda^{(i)}}.$$

By Fulton-Lazarsfeld \cite{FultonLazarsfeld}, for any $x\in  \Sigma_{k-1}$, for any projective $d$-dimensional manifold $X$, for any ample vector bundle $E$ of rank $e$ and for any ample class $h$ on $X$ it holds that $\int_X \Gamma_x(E) h^{d-b}>0$.

\begin{definition}
 We say that for $x\in  \Sigma_{k(d-2,e)-1}$ the characteristic class $\Gamma_x$  has the \emph{universal Hodge-Riemann property} if for any $d$-dimensional projective manifold $X$ and for any ample rank $e$ vector bundle $E$  on $X$ the class $\Gamma_x(E)$ has the Hodge-Riemann property. 
\end{definition}

Now to each partition $\lambda$ and integer $N\ge b$ we let $C_{\lambda}$ be the corresponding Schubert variety in $\Grass(N,\mathbb C^{N+e})$, where by $\Grass(N,\mathbb C^{N+e})$ we denote the Grassmannian of $N$-dimensional linear subspaces of $\mathbb C^{N+e}$.  That is, if $\lambda=(\lambda_{1},\ldots,\lambda_{n})$ is a partition of $b$ with $e\ge\lambda_{1}\ge \ldots\ge\lambda_{n}> 0$, we fix a sequence of subspaces $0\subsetneq A_1\subsetneq A_{2} \subsetneq\cdots \subsetneq A_n\subset \mathbb C^{N+e}$ with $\dim(A_i) = a_i: = e + i -\lambda_i$ for $1\le i\le n$ we set
$$C_\lambda : = \{ L\in \Grass(N,\mathbb C^{N+e}) \ | \ \dim L\cap A_i \ge i \text{ for all } i=1,\ldots, n \}.$$

\begin{definition} 
\label{def:good} We say that a point $x\in \Sigma_{k(b,e)-1}\cap \Q^{k(b,e)} $ is {\em irreducibly representable} if there exist positive  integers $N\ge b$ and $m$  such that the cycle $$m\sum_{i=1}^{k(b,e)} x_i C_{{\lambda^{(i)}}}$$ in $\Grass(N,\mathbb C^{N+e})$ is algebraically equivalent to an irreducible cycle. We denote the set of irreducibly representable points of $\Sigma_{k(b,e)-1}$ by $B_{b,e}$.
\end{definition}

\begin{theorem}\label{thm:convex}
For every $x\in \overline{B_{d-2,e}}$ we have 
\begin{enumerate}
\item The class $\Gamma_x$ 
has the universal Hodge-Riemann property. 
\item The class $\Gamma_x$ 
has the Hodge-Riemann property in the linear case.
\item The class $\Gamma_x$ 
has the Hodge-Riemann property in the K\"ahler case.
\end{enumerate}
\end{theorem}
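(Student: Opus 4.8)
\textbf{Proof plan for Theorem \ref{thm:convex}.}

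The plan is to mirror the strategy used for Theorem \ref{thm:main}, replacing the single Schur polynomial $s_\lambda$ by the convex combination $\Gamma_x$, and doing the geometric input through an irreducible representing cycle rather than through the cone construction attached to a single partition. I would begin with the case $x \in B_{d-2,e}$ itself. Fix a $d$-dimensional projective manifold $X$ and an ample rank $e$ bundle $E$; choose $N \ge d-2$ and $m$ so that $m\sum_i x_i C_{\lambda^{(i)}}$ is algebraically equivalent to an irreducible cycle $Z$ inside $\Grass(N, \C^{N+e})$. The point of irreducibility is that the total space of the tautological bundles over $\Grass$, when pulled back along a classifying map associated to $E$ (on the relevant flag/Grassmann bundle over $X$), restricts over $Z$ to give an irreducible variety $\pi : \hat C \to X$ carrying a nef vector bundle $U$ with $\pi_*(c_{n-2}(U)) = m\,\Gamma_x(E)$, exactly as in the single-Schur case of \cite[Section 5]{RossToma}, Proposition \ref{prop:pushforwardschur} (the algebraic equivalence guarantees the pushforward of Chern classes is the right combination). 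Then Theorem \ref{theorem:paper2} applied to $\hat C$ — after passing to a resolution of singularities since $\hat C$ is only irreducible — shows $Q_{\Gamma_x(E)}$ has the weak Hodge-Riemann property, and the Fulton-Lazarsfeld positivity quoted before Definition \ref{def:good} supplies the positive class $h$ with $Q_{\Gamma_x(E)}(h) > 0$.

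To upgrade from weak to full Hodge-Riemann for $x \in B_{d-2,e}$ I would run the augmentation machine of Section \ref{section:augmentation} verbatim with $s_\lambda$ replaced by $\Gamma_x$: define $\hat X = X \times \P^d$, the forms $\mathcal Q_i$ and $\mathcal R_{i,t}$ built from $\Gamma_x(\hat E)$ (where $\hat E$ carries the hyperplane twist) and the derived polynomials $\Gamma_x^{(j)}$, check conditions (A1)-(A5) and (B1)-(B5) exactly as in Subsections \ref{subsec:geometric}-\ref{subsec:propertiesR_i}. The key observations are that $\Gamma_x^{(d-2)}$ is a strictly positive constant (it is a convex combination of the positive constants $s_{\lambda^{(i)}}^{(d-2)}$), that $\Gamma_x^{(d-1)} = \Gamma_x^{(d)} = 0$ so the recursion bottoms out at $\mathcal R_{2,0}|_W$ = (positive constant) $\times$ (intersection with $h^{d-2}$), which has the Hodge-Riemann property by the classical bilinear relations, and that the constant $c_{\mathcal R_2}$ is nonzero; the inputs (A2),(A3),(B2),(B3) come from Theorems \ref{theorem:paper2} and \ref{theorem:oldinequality} applied on (a resolution of) $\hat C$ with the nef $\R$-twisted bundle $U\langle \pi^*(th)\rangle$, just as in Proposition \ref{prop:Q'nefFnonpositive}. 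Then Theorems \ref{thm:recursion} and \ref{thm:augmentation2} give that $\Gamma_x(E)$ has the Hodge-Riemann property, proving (1) for $x \in B_{d-2,e}$.

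For the passage to the closure $\overline{B_{d-2,e}}$: the Hodge-Riemann property is open but not closed, so I cannot simply take limits of the full property. Instead I take limits of the weak property. For $x \in \overline{B_{d-2,e}}$ and any fixed $(X,E,h)$, the bilinear form $Q_{\Gamma_x(E)}$ is a limit of the forms $Q_{\Gamma_{x_n}(E)}$ with $x_n \in B_{d-2,e}$, each of which has the Hodge-Riemann property, hence $Q_{\Gamma_x(E)}$ has the weak Hodge-Riemann property, and $Q_{\Gamma_x(E)}(h) > 0$ by Fulton-Lazarsfeld. Now the same augmentation argument as in the previous paragraph applies: forming $\mathcal R_{i,t}$ out of $\Gamma_x$, the conditions (A2),(A3),(B2),(B3) are exactly the weak Hodge-Riemann / Hodge-index inequalities, which pass to limits (each is a closed condition preserved under the approximation $x_n \to x$, or equivalently follows from the weak Hodge-Riemann property of the relevant geometrically realized forms which is itself closed), while (A1),(A4),(A5),(B1),(B4),(B5) and the bottom-of-recursion facts are formal identities or positivity of the constant $\Gamma_x^{(d-2)}$ and of $\int_X h^d$, all of which survive. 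Hence $\Gamma_x(E)$ has the full Hodge-Riemann property, giving (1). Statement (2) then follows by specializing to $X$ a torus of maximal Picard number and taking $E$ a sum of line bundles whose first Chern classes are prescribed Kähler classes, and then identifying $\bigwedge^{1,1}_\R E^*$ with $H^{1,1}_\R(T)$ by Hodge theory exactly as in Subsection \ref{subsec:proofSketch}; statement (3) follows from (2) by the pointwise-to-global argument of Section \ref{sec:pointwise_to_global}, noting that $\Gamma_x(\omega_1,\ldots,\omega_e) = \sum_i x_i s_{\lambda^{(i)}}(\omega_1,\ldots,\omega_e)$ is a nonnegative combination of strongly positive forms hence strongly positive, so Proposition \ref{prop:pointwisetoglobal} applies.

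The main obstacle is the first paragraph: producing the morphism $\pi : \hat C \to X$ and nef bundle $U$ with $\pi_* c_{n-2}(U) = m\Gamma_x(E)$ from the hypothesis that $m\sum_i x_i C_{\lambda^{(i)}}$ is algebraically equivalent to an \emph{irreducible} cycle. One must be careful that algebraic equivalence (not rational equivalence or mere numerical equivalence) is what is needed to guarantee both that the pushforward of the relevant Chern class computes the intended combination of Schur classes and that the resulting total space is irreducible; and one must set up the classifying map for the ample bundle $E$ into an appropriate Grassmann bundle so that the Schubert-variety construction globalizes over $X$. Once that geometric input is in place, everything else is either a direct transcription of Section \ref{sec:cone_classes} or a routine limiting argument.
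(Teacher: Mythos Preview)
Your proposal is essentially correct and follows the same route as the paper: use an irreducible representing cycle on the Grassmannian to globalize over $X$ and supply the geometric input for (A2,A3,B2,B3), run the augmentation machine of Section~\ref{section:augmentation} with $\Gamma_x$ in place of $s_\lambda$, and handle the closure by noting that those four conditions are closed. The paper organizes (1) slightly more economically---it sets up the $\mathcal{R}_{i,t}$ directly for $x \in \overline{B_{d-2,e}}$ and reduces to $x \in B_{d-2,e}$ only when verifying the closed conditions, rather than first proving the $x \in B_{d-2,e}$ case in full---but this is cosmetic. (One small remark on your final paragraph: on Grassmannians algebraic and rational equivalence coincide, so the worry you flag there is not an issue.)

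There is however one genuine slip in your deduction of (2). You write that (2) follows from (1) by ``specializing to $X$ a torus of maximal Picard number and taking $E$ a sum of line bundles whose first Chern classes are prescribed K\"ahler classes.'' But an arbitrary K\"ahler class on the torus need not be rational, so no such line bundle exists in general; statement (1) concerns honest ample vector bundles and therefore only yields the torus case for \emph{rational} K\"ahler classes. From that you get only the weak Hodge-Riemann property for irrational $\omega_j$, not the full one. The paper's route for (2) is instead to re-run the augmentation argument on the torus with arbitrary K\"ahler classes $\omega_j$ (exactly as in Section~\ref{subsec:proofMain}), approximating each $\omega_j$ by rational K\"ahler classes to verify the closed conditions (A2,A3,B2,B3), with the irreducible-cycle geometric input entering at that rational stage. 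So (2) is not literally a corollary of (1) but a parallel application of the same machinery; the limiting idea you already articulated for the passage $B_{d-2,e} \to \overline{B_{d-2,e}}$ just needs to be applied once more, now varying the $\omega_j$ rather than $x$. Your deduction of (3) from (2) via Proposition~\ref{prop:pointwisetoglobal} is fine.
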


Before giving the proof we will describe the connection between Schubert varieties and Schur classes of vector bundles through some aspects of invariant algebraic cycles on projective manifolds under a group action. 

Fix two positive integers $e$ and $N$ and two complex vector spaces $H$ and $H'$ of dimensions $e+N$ and $e$ respectively. Let $G:=\GL(H')$ be the general linear group acting on $H'$. Then $G$ acts by composition on the vector space $\Hom(H,H')$ and further on its projectivization $\P_{sub}(\Hom(H,H'))$. 
The open subset $\Hom(H,H')^\circ$ consisting of surjective homomorphisms is $G$-invariant and its quotient by the $G$ action is given by the map $p:\Hom(H,H')^\circ\to\Grass(N,H)$, $\sigma\mapsto\Ker(\sigma)$. (Actually  $\Hom(H,H')^\circ$ is the subset of (semi-)stable points for the $G$-action on $\Hom(H,H')$).

Let now $C$ be an algebraic cycle on $\Grass(N,H)$ and $\hat{C}$ be the algebraic cycle on $\P_{sub}(\Hom(H,H'))$ given by the closure  of $p^{-1}(C)$ in $\Hom(H,H')$. This is a $G$-invariant cycle on $\P_{sub}(\Hom(H,H'))$.  
More generally, we claim that any algebraic family $\cC$ of cycles on $\Grass(N,H)$ induces in this way an algebraic family $\hat{\cC}$ of $G$-invariant cycles on  $\P_{sub}(\Hom(H,H'))$. Since on Grassmannians algebraic equivalence and rational equivalence of cycles coincide, we may as well work with the latter.

This construction may be extended to a relative situation as follows. 
If $E$ is a rank $e$ vector bundle on a projective manifold $X$ and $F: = H^*\otimes E=\cH om(H,E)$ then any algebraic cycle $C$ on $\Grass(N,H)$ induces an algebraic cycle $\hat{C}_E$ on $\P_{sub}(\cH om(H,E))=\P_{sub}(F)$ and similarly for algebraic families of cycles on $\Grass(N,H)$.
It follows that two algebraic equivalent cycles $C$, $C'$ on $\Grass(N,H)$ induce algebraic equivalent cycles $\hat{C}_E$, $\hat{C'}_E$ on $\P_{sub}(F)$. Thus we have a morphism of Chow groups 
\begin{equation}
A_*(\Grass(N,H))\to A_*(\P_{sub}(\cH om(H,E))) \quad C\mapsto\hat{C}_E\label{eq:morphismchow}\end{equation}
 Note that by construction if $C$ is irreducible, then also $\hat C$ and $\hat{C}_E$ will be irreducible.

In particular we can apply this when  $C= C_{\lambda}$ is a Schubert variety.   Then the closure  of $p^{-1}(C)$ in $\Hom(H,H')$ coincides with the subvariety
$$\tilde C : = \{ \sigma\in \Hom(H,H'): \dim \ker(\sigma(x))\cap A_i \ge i \text{ for all } i=1,\ldots, n\}$$
of $\Hom(H,H')$. Indeed, $p^{-1}(C)=\tilde C \cap \Hom(H,H')^\circ$, $\tilde C$ is closed in $\Hom(H,H')$ and  $\tilde C$ is irreducible since $C$ is irreducible.
 
 \begin{remark}
The reader can check that for Schubert cycles $C$ in $\Grass(N,H)$ and for vector bundles $E$ of rank $e$ on some projective manifold $X$, the construction of the cycles $\hat{C}_E$ on $\P_{sub}(\cH om(H,E))$  recovers the definition $\hat{C}_E$ from Section \ref{subsec:geometric}.
\end{remark}

\begin{proof}[Proof of Theorem \ref{thm:convex}]
{\bf (1)}
Let $X$ be a complex projective manifold of dimension $d$, $E$ be an ample vector bundle of rank $e$ on $X$ and $x$ an element in  $\overline{B_{d-2,e}}$. We fix an ample class $h$  on $X$ and will show that $\Gamma_x(E)$ has the Hodge-Riemann property with respect to $h$.

We make the same notations as in Section \ref{sec:cone_classes}, 
$W:=H^{1,1}_\R(X)$, 
$\hat{X} : = X\times \mathbb P^d$,  $\zeta:=c_1(\cO_{\mathbb P^d}(1))_\R\in H^{1,1}_\R(\mathbb P^d)$,  $V:= H^{1,1}_\R(\hat{X}) = W \oplus \mathbb R{\zeta}$ and define $\cQ_i$ and $\cR_{i,t}$ by imitating Definitions \ref{def:definitionQ_i} and \ref{def:definitionR_i} but for the class $\Gamma_x$ instead of $s_\lambda$, i.e.
\begin{align*}
\mathcal Q_i(\beta,\beta'):=\mathcal Q_i(\beta,\beta';x):= \int_{\hat{X}} \beta 
\Gamma_x(E\langle\zeta\rangle)\zeta^i h^{d-i} \beta'\end{align*}
for $\beta,\beta'\in V$ and 
$$\cR_{i,t}:=\sum_{k=0}^i \binom{d-i+k}{k}t^k\cQ_{i-k}$$
 for $t\in\R$ and  for $0\le i\le d$.
 For $i$ outside the given range we set $\cQ_i=0$ and $\cR_i=0$.
 Here to simplify notation we have used the symbols $E$ and $\zeta$ also for their pull-backs to $\hat X$. 
 
 We will apply our algorithm as in Section \ref{subsec:proofMain} to show that $\mathcal R_{d,0}|_W$ has the Hodge-Riemann property with respect to $h$ which is exactly statement (1) of Theorem \ref{thm:convex}. 
 For this we need to check properties (A) and (B) for $\cR_{i}$ along the lines of Section \ref{subsec:propertiesR_i}.

The only delicate part is establishing an analogue of Proposition  \ref{prop:Q'nefFnonpositive}. As both statements in that Proposition are closed under variation of $x$, we may assume that $x$ lies in the set $B_{d-2,e}$ of irreducibly representable points of $\Sigma_{k(d-2,e)-1}$. Now for $x$ in this set, the idea is to have a geometric interpretation of the forms $\cR_{i,t}$   as forms $\cS_{i,t}$ in the set-up of Section \ref{subsec:geometric}, where this time $\cS_{i,t}$ is computed by integrating on a suitable irreducible variety $\widehat{C(x)}_E$ (that now depends on $x$).   This may be achieved since $x$ was assumed to be in 
$B_{d-2,e}$, so we can choose $N$ and $m$ as in Definition \ref{def:good}, $H$ a complex  vector space of dimension $e+N$ and choose $C(x)$ to be an irreducible cycle algebraically equivalent to  
$m\sum_{i=1}^{k(d-2,e)} x_i C_{{\lambda^{(i)}}}$ in $\Grass(N,H)$ and then take $\widehat{C(x)}_E$ to be the corresponding cone in $\P_{sub}(\cH om(H,E))$ given in \eqref{eq:morphismchow}.  Then as $C(x)$ is irreducible, so is $\widehat{C(x)}_E$.

Then similarly to Proposition \ref{prop:pushforwardschur} the forms $m\cR_{i,t}$ correspond to the forms 
\begin{align*}\cS_{i,t}(\beta,\beta')&:=\int_{\widehat{C(x)}_E}\beta c_{f-(d-i)}(U\langle \pi^*(th)\rangle)\pi^*(h)^{d-i}\beta'\\&=m\sum_{j=1}^{k(d-2,e)} x_j \int_{\widehat{C_{{\lambda^{(j)}}}}_E}\beta c_{f-(d-i)}(U\langle \pi^*(th)\rangle)\pi^*(h)^{d-i}\beta'\end{align*}
on $H^{1,1}_\R(\mathbb P_{\sub}(\Hom(H,E)))$, where as in Section \ref{subsec:geometric} $f  = \rank(\Hom(H,E))-1 = e(e+N)-1$  and $U$ is the universal quotient bundle on  $\mathbb P_{\sub}(\Hom(H,E))$.
Now the same arguments used in Proposition  \ref{prop:Q'nefFnonpositive} will work and prove the two corresponding statements in the present case too (we remark that irreducibility of $\widehat{C(x)}_E$ is crucial here, since in that argument we use a resolution of singularities to pass to the smooth case).

{\bf (2)}
The proof of this statement goes exactly as in Section \ref{subsec:proofSketch} by reduction first to the K\"ahler case on tori with maximal Picard number and in that case as in Section \ref{subsec:proofMain} by the above argument.

{\bf (3)} is a consequence of the linear case (2) and of Proposition \ref{prop:pointwisetoglobal}.
\end{proof}

\begin{corollary}\label{cor:closure} In general there exist  rational points in $\Sigma_{k(b,e)-1}\setminus \overline{B_{b,e}}$. \end{corollary}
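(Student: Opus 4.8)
The plan is to use Theorem~\ref{thm:convex} in the contrapositive. Since every $x\in\overline{B_{d-2,e}}$ yields a class $\Gamma_x$ with the (universal, and in particular linear) Hodge--Riemann property, it suffices to exhibit, for one well-chosen pair $(b,e)$ (with $d:=b+2$), a rational point $x\in\Sigma_{k(b,e)-1}$ for which $\Gamma_x$ fails the Hodge--Riemann property in a \emph{robust} way: namely, for which there is a $d$-dimensional complex vector space $E$ and strictly positive $(1,1)$-forms $\omega_1,\ldots,\omega_e$ on $E$ such that the quadratic form $Q_{\Gamma_x(\omega_1,\ldots,\omega_e)}$ on $\bigwedge^{1,1}_\R E^*$ has \emph{at least two} positive eigenvalues. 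The reason for demanding two positive eigenvalues, rather than merely ``the wrong signature'', is that the former is an open condition in $x$ (the eigenvalues depend continuously on $x$ once the $\omega_j$ are fixed), so a whole neighbourhood $U$ of $x$ in $\Sigma_{k(b,e)-1}$ consists of points for which $\Gamma_{x'}$ fails the linear Hodge--Riemann property. By Theorem~\ref{thm:convex}(2) every point of $B_{b,e}$ has the linear Hodge--Riemann property, so $U\cap B_{b,e}=\emptyset$ and hence $x\notin\overline{B_{b,e}}$; rationality of $x$ is no obstruction since $\Q^{k}$ is dense in $U$.

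To construct such an $x$ one exploits the failure of convexity of the Hodge--Riemann property on a \emph{split} model. Take $E=E_1\oplus E_2$ with $\dim E_k=d_k\geq 2$ and $d_1+d_2=b+2$, and choose the forms block-diagonally, $\omega_j=\omega_j^{(1)}+\omega_j^{(2)}$ with $\omega_j^{(k)}$ strictly positive on $E_k$. Expanding by the splitting principle, $\Gamma_x(\omega_1,\ldots,\omega_e)$ decomposes through the Littlewood--Richardson rule into components of pure bidegree $(p;q)$ (degree $p$ in $E_1$, degree $q$ in $E_2$) with $p+q=b$. On the subspace $\bigwedge^{1,1}_\R E_1^*\oplus\bigwedge^{1,1}_\R E_2^*$, the form $Q_{\Gamma_x(\underline\omega)}$ has a block-diagonal part, obtained by pairing $\alpha_1^2$ (respectively $\alpha_2^2$) with the $(d_1-2;d_2)$ (respectively $(d_1;d_2-2)$) component of $\Gamma_x(\underline\omega)$: up to a positive scalar each such block is the intersection form of $E_k$ against a strictly positive class, hence has a one-dimensional positive subspace, by the classical linear Hodge--Riemann relations or by Theorem~\ref{thm:sketch}(1) applied to the surviving Schur summands. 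It also has an off-diagonal part, obtained by pairing $\alpha_1$ with $\alpha_2$ through the $(d_1-1;d_2-1)$ component of $\Gamma_x(\underline\omega)$. If $(b,e)$, the $d_k$, and the weights $x$ are chosen so that this off-diagonal component vanishes, the form is genuinely block-diagonal and has signature $(2,\ast)$ on that subspace, so $\Gamma_x$ fails Hodge--Riemann; more generally it is enough that the off-diagonal part be small enough that the $2\times 2$ minor on the span of the two positive directions stays positive definite. One finally notes that the example must occur at a non-vertex of $\Sigma_{k(b,e)-1}$: a single Schubert variety $C_\lambda$ is irreducible, so each vertex already lies in $B_{b,e}$.

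The main obstacle is this last choice, i.e. producing an explicit admissible configuration. A direct computation shows that the $(d_1-1;d_2-1)$ component of $\Gamma_x(\underline\omega)$ is controlled by the derived Schur polynomials $s_\lambda^{(j)}$ of \cite{RossToma}, whose relevant coefficients are all strictly positive, and that the competition between this component and the diagonal blocks is tightened by the Alexandrov inequality among pairwise products of positive $(1,1)$-forms on a surface, namely $(\omega\wedge\omega')^2\geq(\omega\wedge\omega)(\omega'\wedge\omega')$. Because of this, the most naive attempts --- $b=2$, or the $\omega_j^{(k)}$ taken pairwise proportional --- recover the Hodge--Riemann signature. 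One therefore needs $b$ (and $e$) large enough, or the partitions $\lambda^{(i)}$ sufficiently ``wide and tall'', that the off-diagonal Littlewood--Richardson contributions cancel genuinely; alternatively, and more robustly, one replaces each $E_k$ by a projective manifold of Picard rank $>1$ carrying several independent positive classes, so that vanishing of the off-diagonal component becomes several linear equations in $x$ that are jointly solvable in the interior of the simplex, and then perturbs the resulting nef bundle to an ample one (the inequality being open). Pinning down one such configuration and checking the two positive eigenvalues for it is where the real content of the corollary lies.
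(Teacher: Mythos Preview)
Your high-level strategy is correct and matches the paper's: use Theorem~\ref{thm:convex} in the contrapositive, by producing a rational $x$ for which $\Gamma_x$ fails the Hodge--Riemann property. However, there is one unnecessary detour and one genuine gap.

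The detour is your ``robustness'' argument (demanding two strictly positive eigenvalues so that failure persists on an open neighbourhood of $x$). This is not needed. Theorem~\ref{thm:convex} already asserts the Hodge--Riemann property for every $x$ in the \emph{closure} $\overline{B_{d-2,e}}$, not merely for $x\in B_{d-2,e}$. Hence a single rational $x$ at which $\Gamma_x$ fails (for one specific $X$ and ample $E$, or in the linear case for one choice of $\omega_j$) already lies outside $\overline{B_{d-2,e}}$; no openness is required. Your openness argument would only be useful if you first found an \emph{irrational} $x$ and then had to perturb to a rational one.

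The gap is that you do not actually produce an example. You describe a plausible mechanism (a block splitting $E=E_1\oplus E_2$ with block-diagonal $\omega_j$, hoping to suppress the off-diagonal $(d_1-1;d_2-1)$ Littlewood--Richardson component), but you do not carry it out, and in your final paragraph you explicitly concede that ``pinning down one such configuration \ldots\ is where the real content of the corollary lies''. As you yourself observe, the naive attempts fail: the relevant coefficients are all positive and the Alexandrov inequality tends to restore the signature $(1,\ast)$. Without an explicit configuration and a verified signature computation, this is a sketch of a strategy, not a proof.

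The paper's proof is a single sentence: it invokes \cite[Section~9.2]{RossToma}, where an explicit rational $x$ with $d=5$ and $e=3$ (so $b=3$) is exhibited, together with a projective manifold $X$ and an ample bundle $E$ for which $\Gamma_x(E)$ fails the Hodge--Riemann property. That prior construction is exactly the missing ingredient in your argument.
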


\begin{proof}
By Theorem \ref{thm:convex} it suffices to show that $d$, $e$ and $x\in \Sigma_{k(d-2,e)-1}\cap\Q^{k(d-2,e)}$ exist together with a projective manifold $X$ of dimension $d$ and an ample vector bundle of rank $e$ on $X$ such that $\Gamma_x(E)$ does not have the Hodge-Riemann property. This was done in \cite[Section 9.2]{RossToma} for $d=5$ and $e=3$.  
\end{proof}




\bibliography{hr_linear}
\bibliographystyle{amsalpha}

\providecommand{\bysame}{\leavevmode\hbox to3em{\hrulefill}\thinspace}
\providecommand{\MR}{\relax\ifhmode\unskip\space\fi MR }
\providecommand{\MRhref}[2]{%
  \href{http://www.ams.org/mathscinet-getitem?mr=#1}{#2}
}
\providecommand{\href}[2]{#2}

\end{document}